\newcommand{\Z}{\mathbb{Z}}
\newcommand{\diag}{{\rm diag}}
\newcommand{\SNR}{{\rm SNR}}
\newcommand{\Cond}{{\rm Cond}}
\newcommand{\spa}{{\rm span \,}}
\newcommand{\N}{\mathbb{N}}
\newcommand{\C}{\mathbb{C}}
\newcommand\numberthis{\addtocounter{equation}{1}\tag{\theequation}}
\DeclareMathOperator*{\Tr}{Tr}
\def\hide #1 {}
\long\def\longhide #1 {}
\theoremstyle{plain}
\newtheorem{theorem}{Theorem}[section]
\newtheorem*{theorem*}{Theorem}
\newtheorem{lemma}[theorem]{Lemma}
\newtheorem{proposition}[theorem]{Proposition}
\newtheorem{corollary}[theorem]{Corollary}
\theoremstyle{definition}
\newtheorem{remark}[theorem]{Remark}
\newtheorem{example}[theorem]{Example}
\newtheorem{definition}[theorem]{Definition}
\theoremstyle{definition}
\newcommand*{\Scale}[2][4]{\scalebox{#1}{$#2$}}%
\newcounter{nootje}
\renewcommand{\check}[1]
  {\marginpar{\tiny\begin{minipage}{20mm}\begin{flushleft}\thenootje : #1\end{flushleft}\end{minipage}}\addtocounter{nootje}{1}}
\title{Robustness to noise and erasures of Gabor frames\\ in finite dimensions}
\author{\begin{tabular}{ccc}
  Palina Salanevich\thanks{Mathematical Institute, Utrecht University. {\sl Email}: p.salanevich@uu.nl} & \quad \quad \quad \quad & Nigel Q. D. Strachan\thanks{Mathematics Department, Utrecht University; Department of Intelligent Systems, Delft University of Technology (from September 1 2025). {\sl Email}: n.q.d.strachan@uu.nl/tudelft.nl }
  \end{tabular}
}
\begin{document}
\maketitle
\date{}

\begin{abstract}
In this paper, we investigate the robustness of structured frames to measurement noise and erasures, with the focus on Gabor frames $(g, \Lambda)$ with arbitrary sets of time-frequency shifts $\Lambda$. This property of frames is important
in many signal processing applications, from wireless communication to phase retrieval and quantization.  
We aim to analyze the dependence of the frame bounds of such frames on their structure and cardinality and provide constructions of nearly tight Gabor frames with small $\vert \Lambda \vert$.
We show that the frame bounds of Gabor frames with a random window $g$ and frame set $\Lambda$ show similar behavior to the frame bounds of random frames with independent entries. 
Moreover, we study uniform estimates for the frame bounds in the case of measurement erasures. We prove numerical robustness to erasures for mutually unbiased bases frames with erasure rate up to 50\% and show that the Gabor frame generated by the Alltop window can provide results similar to the best previously known deterministic constructions.

\par
\medskip
{\bf Key words:} Gabor frames, mutually unbiased bases frames, structured random matrices, frame bounds, numerical robustness to erasures
\end{abstract}

\section{Introduction}
 
Frames generalize the classical notion of a basis and provide redundant representations of signals. They became a powerful tool in many areas of applied mathematics, computer science, and engineering. Among other applications, frames are used in communication systems for signal transmission over a noisy channel~\cite{fickus2012numerically}; and also in image processing~\cite{kingsbury1998wavelet}, phase retrieval~\cite{jaganathan2016phase}, tomography~\cite{innocenti2023shadow}, and speech recognition~\cite{balan1}, where the initial signal is not available and we have access to its measurements in the form of frame coefficients instead.

Formally, a set of vectors $\Phi = \{\varphi_j\}_{j = 1}^N \subset \mathbb{C}^M$ is called a \emph{frame} if $\Phi$ spans the ambient space $\mathbb{C}^M$. One usually considers \emph{overcomplete} frames for which $N > M$.
Any signal $x\in \mathbb{C}^M$ can be uniquely represented by its \emph{frame coefficients} $\{\langle x, 
\varphi_j\rangle\}_{j = 1}^N$. 
A key advantage of such a frame representation is its redundancy. 
It allows one to ensure that a signal can still be reconstructed if a part of its frame coefficients is lost in the measurement or transmission process. Furthermore, in many applications, the frame coefficients are corrupted by measurement noise. In these cases, redundancy of the frame representation allows to mitigate it and obtain a stable signal reconstruction.

An important property of a frame that determines how stable is the signal reconstruction from its noisy frame coefficients is the lower and upper frame bounds $A_{\Phi}$ and $B_{\Phi}$. They are defined as the optimal constants such that for all $x \in \mathbb{C}^M$,
\begin{align*}
    A_{\Phi} \|x\|_2^2 \leq \sum_{j = 1}^N |\langle x, \varphi_j\rangle|^2 \leq B_{\Phi} \|x\|_2^2.
\end{align*}
When the frame bounds of a frame $\Phi$ are sufficiently close to each other, we call $\Phi$ \emph{well-conditioned}. We postpone a more detailed discussion of the frame theory background until Section~\ref{sec: frame theory background}. 

Frame bounds for random frames with independent entries have been well studied~\cite{vershynin2018high, rudelson1, latala2005some}. However, in many signal processing problems, the structure of the frame is dictated by the specific application. This motivates the study of structured (and random structured) frames. 

In this paper, we focus on the \emph{Gabor frames} that arise naturally in imaging, microscopy, and audio processing applications~\cite{bammer2019gabor, rolland2010gabor}.
A Gabor frame $(g, \Lambda)$ is generated by a single vector $g \in \mathbb{C}^M$ called \emph{window} and a \emph{frame set} $\Lambda \subset \mathbb{Z}_M \times \mathbb{Z}_M$ by taking time-frequency shifts $\pi(\lambda)$ of $g$ for $\lambda \in\Lambda$, see Section \ref{sec: frame theory background} for a precise definition. One can randomize this construction by considering a random window $g$. However, each vector in $(g, \Lambda)$ is a unitary transformation of $g$, and thus the vectors in a random Gabor frame are heavily dependent. Therefore, different techniques are needed to analyze the frame bounds for random Gabor frames. Furthermore, the frame bounds of $(g, \Lambda)$ depend not only on the cardinality $\vert \Lambda \vert$ of the frame (as in the case of random frames with independent vectors), but also on the structure of $\Lambda$. We analyze the frame bounds of Gabor frames for $\Lambda$ with a specific structure and also for generic $\Lambda$. In the latter case, we generate $\Lambda$ at random, from the uniform distribution on all $\Lambda\subset \mathbb{Z}_M\times \mathbb{Z}_M$ with a given cardinality. For a generic $\Lambda$, we obtain frame bounds that are comparable to those of random frames with independent vectors. 

We also investigate robustness of structured frames to adversarial measurement erasures. To guarantee stable reconstruction of the signal in the case of $k$ erasures, one needs to derive a uniform bound on the frame bounds of all subframes of cardinality $N-k$. We show robustness to up to $N/2$ erasures for a large class of \emph{mutually unbiased bases frames}. This class of frame is characterized by its `good' global structure. From this result, we derive an example of a deterministic  Gabor frame that is robust to both noise and high-rate erasures.  

\subsection{Related work}
Frame bounds have been well studied for random frames with independent vectors. The standard estimates of Gaussian and, more generally, subgaussian frames can be found in~\cite{vershynin2018high}. An optimal estimate for the lower frame bound of a subgaussian random frame was obtained by  Ruldeson and Vershynin in \cite{rudelson2}. They showed that with high probability it is bounded  from below by $c\left(\sqrt{\frac{N}{M}} - \sqrt{\frac{M-1}{M}}\right)$. An estimate for the upper frame bound was derived by Latała for a general class of random frames with independent bounded fourth moment entries~\cite{latala2005some}. More specifically, Latała showed that for such frames $B_{\Phi} \leq C\frac{N}{M}$ with high probability.  

To the best of our knowledge, no results on the frame bounds of Gabor frames with general $\Lambda\subset \mathbb{Z}_M\times \mathbb{Z}_M$ were obtained before. However, a closely related result on the singular values of the matrix associated with an \emph{underdetermined} Gabor system $(g,\Lambda)$ with a random window $g$ and $|\Lambda| < M$ has been established in \cite{pfander2010sparsity}. This result is motivated by the compressive sensing problem and aims to evaluate the \emph{restricted isometry property} of time-frequency structured random matrices, see also~\cite{krahmer2014suprema}. The result from~\cite{pfander2010sparsity} is formulated as follows.

\begin{theorem}\emph{ \cite{pfander2010sparsity}}
Let $\varepsilon, \delta \in (0,1)$ and consider a random window $g$ given by ${g(j) = \frac{1}{\sqrt{M}}e^{2\pi i y_j}}$, $j\in \mathbb{Z}_M$, with $y_j$ independent uniformly distributed on~$[0,1)$. For a Gabor system $(g, \Lambda)$, consider the matrix $\Phi_{\Lambda}$ that has vectors of $(g, \Lambda)$ as its columns. Then, for any $\Lambda\subset \mathbb{Z}_M\times \mathbb{Z}_M$ with
\begin{equation*}
|\Lambda| \le \frac{\delta^2 M}{4e (\log (|\Lambda|/\varepsilon) + c)}
\end{equation*}
\noindent for $c = \log(e^2 / (4(e - 1)))\approx 0.0724$, the minimal and maximal nonzero singular values of $\Phi_\Lambda$ satisfy $$\mathbb{P}\left( 1 - \delta\le  \sigma_{\min}^2(\Phi_\Lambda) \le  \sigma_{\max}^2(\Phi_\Lambda) \le  1+\delta\right) \geq 1 - \varepsilon.$$ 
\end{theorem}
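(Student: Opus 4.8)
The plan is to reduce the singular-value estimate to a bound on the operator norm of the off-diagonal part of the Gram matrix, and then to control that norm by the moment (trace) method. Since $|\Lambda| < M$, the matrix $\Phi_\Lambda \in \mathbb{C}^{M\times|\Lambda|}$ is tall, and its nonzero singular values squared are exactly the eigenvalues of the Gram matrix $G = \Phi_\Lambda^* \Phi_\Lambda \in \mathbb{C}^{|\Lambda|\times|\Lambda|}$, whose entries are the cross-correlations $G_{\lambda,\lambda'} = \langle \pi(\lambda)g, \pi(\lambda')g\rangle$. Because $g$ is normalized so that $\|g\|_2 = 1$ and the shifts $\pi(\lambda)$ are unitary, the diagonal of $G$ is identically $1$, so $G = I + H$ with $H$ Hermitian with zero diagonal. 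The eigenvalues of $G$ are then $1$ plus those of $H$, giving $1 - \|H\| \le \sigma_{\min}^2(\Phi_\Lambda) \le \sigma_{\max}^2(\Phi_\Lambda) \le 1 + \|H\|$; hence it suffices to prove $\mathbb{P}(\|H\| \le \delta) \ge 1 - \varepsilon$.

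Next I would compute the off-diagonal entries explicitly. Writing $\lambda = (p,q)$, $\lambda' = (p',q')$ and $\omega = e^{2\pi i /M}$, a direct calculation gives $G_{\lambda,\lambda'} = \frac{1}{M}\sum_{j} \omega^{(q-q')j}\, e^{2\pi i(y_{j-p}-y_{j-p'})}$. Two features of the time-frequency structure are decisive. First, when $p = p'$ the random phases cancel and the entry reduces to a geometric sum in $\omega^{(q-q')j}$ that vanishes (as $q \neq q'$ whenever $\lambda \neq \lambda'$); thus $H$ is supported on index pairs with distinct time-shifts. Second, when $p \neq p'$ the summand is a product of two \emph{independent} unit-modulus Steinhaus factors $e^{2\pi i y_{j-p}}$ and $\overline{e^{2\pi i y_{j-p'}}}$, each of mean zero. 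Consequently $H$ is a structured random matrix whose entries are normalized sums of independent, bounded, mean-zero variables.

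I would then bound $\mathbb{E}\|H\|^{2n} \le \mathbb{E}\,\Tr(H^{2n})$ (valid since $H$ is Hermitian) and expand the right-hand side as a sum over closed walks $\lambda_1 \to \lambda_2 \to \cdots \to \lambda_{2n} \to \lambda_1$ in $\Lambda$, each contributing $\mathbb{E}[G_{\lambda_1\lambda_2}\cdots G_{\lambda_{2n}\lambda_1}]$. Inserting the entry formula converts this into a sum over the walk together with summation indices $j_1,\dots,j_{2n}$, and the mean-zero independence of the phases forces a balance (pairing) constraint: a monomial survives the expectation only if the net integer exponent of every variable $y_k$ vanishes, i.e. each $y_k$ occurs equally often with $+$ and $-$ sign across the product. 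Carefully counting the walk/index configurations compatible with this constraint, and tracking the $1/M$ normalization, should yield a bound of the form $\mathbb{E}\,\Tr(H^{2n}) \le |\Lambda|\,(c\,n\,|\Lambda|/M)^n$, where the prefactor $|\Lambda|$ is exactly what later produces the $|\Lambda|$ inside the logarithm.

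Finally, Markov's inequality gives $\mathbb{P}(\|H\| > \delta) \le \delta^{-2n}\,\mathbb{E}\|H\|^{2n} \le |\Lambda|\,(c\,n\,|\Lambda|/(\delta^2 M))^n$, and choosing $n$ of order $\log(|\Lambda|/\varepsilon)$ and demanding the bound be at most $\varepsilon$ reproduces exactly the stated admissible range for $|\Lambda|$, with the constants $4e$ and $c$ emerging from the optimization (the factor $e$ from a Stirling-type estimate of the number of admissible pairings). The main obstacle is the combinatorial core of the third step: correctly characterizing which walk/index configurations survive the coupled constraints coming from both the frame-set indices $\lambda_a$ and the shared Steinhaus variables $y_k$, and bounding their number sharply enough to obtain growth that is only \emph{linear} in $n$. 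A lossy count here would inflate the power of $n$, spoil the logarithmic factor, and prevent the optimization from closing with the explicit constant.
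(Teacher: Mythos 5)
Your proposal is correct and follows essentially the same route as the cited source and as this paper's own adaptation of that technique: reduce to bounding the operator norm of the zero-diagonal Hermitian part $H$ of the Gram matrix, expand $\mathbb{E}\Tr(H^{2n})$ over closed walks in $\Lambda$, use the mean-zero Steinhaus phases to force pairing constraints on the surviving terms, and close with Markov's inequality plus optimization over $n$, which is where the constants $4e$ and $c$ arise. Note that the paper does not reprove this theorem (it is quoted from the literature), but its Lemma~\ref{lemma_trace_formula} is precisely this trace/moment argument transposed to the overcomplete case, with the $M\times M$ matrix $\Phi_\Lambda\Phi_\Lambda^* - \frac{|\Lambda|}{M}I_M$ playing the role of your $|\Lambda|\times|\Lambda|$ matrix $\Phi_\Lambda^*\Phi_\Lambda - I$, since for $|\Lambda|>M$ the frame operator, not the Gram matrix, carries the relevant spectrum.
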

\noindent In this paper, we prove analogous results for overcomplete Gabor frames with $|\Lambda| > M$.

\smallskip

In the second part of the paper, we discuss robustness of signal representations with structured frames under (adversarial) measurement erasures. The concept of Numerically Erasure-Robust Frames (NERF), which formalizes the idea of frames that are robust to both additive noise and erasures, was introduced by Fickus and Mixon in \cite{fickus2012numerically}. 
\begin{definition}[Numerically Erasure-Robust Frame \cite{fickus2012numerically}]\label{def: NERF}
For a fixed $p\in [0,1]$ and $C \geq 1$, a frame $\Phi = \{\varphi_j\}_{j = 1}^N$ is called a \emph{$(p,C)$-numerically erasure-robust frame} if, for every $J \subset \{1,\dots,N\}$ of cardinality $|J| = (1 - p)N$, the condition number of the analysis matrix of the corresponding subframe $\Phi_J = \{\varphi_j\}_{j \in J}$ satisfies $\Cond(\Phi_J^*) \leq C$.
\end{definition}

In other words, one can stably reconstruct a signal whenever \emph{any} $p$-fraction of measurements is removed. In~\cite{fickus2012numerically}, it was shown that random Gaussian frames with $N = O(M)$ have the NERF property with an erasure rate of at most 15\%. For deterministic frames, the strongest NERF property was shown for equiangular tight frames (ETF). 

\begin{theorem}[\cite{fickus2012numerically}]\label{th:ETF}
Let $\Phi = \{\varphi_j\}_{j = 1}^N$ be an equiangular tight frame such that ${\frac{N - 1}{M(M - 1)}\geq \alpha}$. Then $\Phi$ is a $(p,C)$-numerically erasure-robust frame for any $p \leq \frac{\alpha (C^2 - 1)^2}{\alpha(C^2 - 1)^2 + (C^2 + 1)^2}$.
\end{theorem}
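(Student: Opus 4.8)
The plan is to control all subframe condition numbers simultaneously by reducing them to a single spectral-variance computation that, thanks to equiangularity, is independent of which coefficients are erased. First I would fix $J\subset\{1,\dots,N\}$ with $|J|=(1-p)N$ and set $I:=\{1,\dots,N\}\setminus J$, $K:=|I|=pN$. Since an equiangular tight frame is a unit-norm tight frame, its frame operator is $\sum_{j=1}^N\varphi_j\varphi_j^*=\tfrac NM I_M$, and the subframe operator therefore splits as $S_J:=\sum_{j\in J}\varphi_j\varphi_j^*=\tfrac NM I_M-E_I$, where $E_I:=\sum_{i\in I}\varphi_i\varphi_i^*$ is the frame operator of the erased vectors. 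The frame bounds of $\Phi_J$ are $A_{\Phi_J}=\lambdamin(S_J)$ and $B_{\Phi_J}=\lambdamax(S_J)$, so $\Cond(\Phi_J^*)^2=B_{\Phi_J}/A_{\Phi_J}$. Because $t\mapsto\frac{t-1}{t+1}$ is increasing on $(1,\infty)$, the bound $\Cond(\Phi_J^*)\le C$ is equivalent to $\frac{B_{\Phi_J}-A_{\Phi_J}}{B_{\Phi_J}+A_{\Phi_J}}\le\frac{C^2-1}{C^2+1}$; it thus suffices to bound the normalized spectral spread of $S_J$ uniformly over all $I$ of size $K$.

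The heart of the argument is an exact second-moment computation. The matrix $S_J$ has $M$ real eigenvalues with mean $\bar\lambda=\tfrac1M\Tr S_J=\tfrac{N-K}M$, using $\Tr E_I=K$. Since $S_J$ and $-E_I$ differ by a scalar, the centered sum of squares satisfies $\sum_i(\lambda_i(S_J)-\bar\lambda)^2=\Tr(E_I^2)-K^2/M$. This is exactly where equiangularity is used: $\Tr(E_I^2)=\sum_{i,j\in I}|\langle\varphi_i,\varphi_j\rangle|^2=K+K(K-1)c^2$, where $c^2=\tfrac{N-M}{M(N-1)}$ is the attained Welch bound. Crucially this depends on $I$ only through $K=|I|$, so the estimate is automatically uniform over all erasure patterns. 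Substituting $c^2$ and simplifying would yield the clean identity $\sum_i(\lambda_i(S_J)-\bar\lambda)^2=\frac{K(N-K)(M-1)}{M(N-1)}=:D$.

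It then remains to convert this variance into a condition-number bound. I would invoke the elementary fact (a one-line Cauchy--Schwarz argument on the centered eigenvalues) that for $M$ reals with mean $\bar\lambda$ and centered sum of squares $D$, both $\lambdamax-\bar\lambda$ and $\bar\lambda-\lambdamin$ are at most $s:=\sqrt{(M-1)D/M}$. Hence $B_{\Phi_J}\le\bar\lambda+s$ and $A_{\Phi_J}\ge\bar\lambda-s$, giving $\Cond(\Phi_J^*)^2\le\frac{\bar\lambda+s}{\bar\lambda-s}$, which is at most $C^2$ precisely when $s/\bar\lambda\le\frac{C^2-1}{C^2+1}$. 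A direct simplification with $K=pN$ gives $s/\bar\lambda=(M-1)\sqrt{\tfrac{p}{(1-p)(N-1)}}$, so after squaring the required condition reads $\frac p{1-p}\le\frac{(C^2-1)^2(N-1)}{(C^2+1)^2(M-1)^2}$, i.e. $p\le\frac{\alpha'(C^2-1)^2}{\alpha'(C^2-1)^2+(C^2+1)^2}$ with $\alpha'=\frac{N-1}{(M-1)^2}$. As the hypothesis provides $\alpha\le\frac{N-1}{M(M-1)}\le\alpha'$ and the threshold is increasing in its parameter, the stated range for $p$ falls within what this estimate guarantees, and the proof closes.

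I expect the main obstacle to be conceptual rather than computational: recognizing that the right object to control is the spectral variance of the retained frame operator, uniformly over erasure patterns, instead of attempting to estimate the singular values of each subframe individually. Once this viewpoint is adopted, equiangularity forces $\Tr(E_I^2)$ to depend only on $|I|$, which delivers the uniformity for free; the remaining work---the algebraic simplification to $D$ and checking that $\bar\lambda-s>0$ under the hypothesis so that $\Phi_J$ is genuinely a frame---is routine. The small slack between $(M-1)^2$ and the stated $M(M-1)$ merely reflects the looseness of the Cauchy--Schwarz spread bound and does not affect the conclusion.
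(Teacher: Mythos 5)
Your proof is correct and follows essentially the same route as the paper's own machinery: the paper actually imports Theorem~\ref{th:ETF} from \cite{fickus2012numerically} without reproving it, but its analogous argument for MUB frames (Theorem~\ref{th:MUBGuarantee}, built on Lemma~\ref{lem: trace} and Corollary~\ref{cr:fpnerf}) is precisely your spectral-second-moment method — compute $\Tr(H^2)$ of the retained subframe exactly, observe that equiangularity makes it depend only on the number of erasures, and convert the bound into a condition-number estimate via $\delta \leq \frac{C^2-1}{C^2+1}$. The only substantive difference is that you invoke Samuelson's inequality (spread at most $\sqrt{(M-1)D/M}$) where the paper's lemma uses the cruder bound $\max_j |\lambda_j - \bar\lambda|^2 \leq \sum_j |\lambda_j - \bar\lambda|^2$, which is exactly why you obtain the marginally sharper constant $(M-1)^2$ in place of the stated $M(M-1)$.
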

Informally, this result states that we can stably reconstruct a signal from its ETF frame coefficients for the erasure rate at most 50\%. Note that the cardinality of numerically erasure-robust ETFs can be rather high, with $N = O(M^2)$. Fickus and Mixon also obtained results on the numerical robustness to erasures for another class of \emph{mutually unbiased bases frames}~\cite{fickus2012numerically}. We discuss it in more details in Section~\ref{sec: deterministic NERF}, where we improve this result.

\subsection{Main results}
In the first part of the paper, we investigate the frame bounds for Gabor frames $(g, \Lambda)$ with a random window $g$. As the properties of such a frame depend not only on the properties of $g$, but also on the structure of $\Lambda$, we start by estimating frame bounds for regularly structured $\Lambda = F \times \mathbb{Z}_M$ or $\Lambda = \mathbb{Z}_M \times F$ for an arbitrary $F\subset \mathbb{Z}_m$ in Proposition \ref{prop_sing_val_Gabor_regular}. For these frames, we show that the frame bounds are determined by how ``spiky'' the window $g$ is.

For $\Lambda$ with arbitrary structure, we develop an approach for reducing frame bounds estimation to a combinatorial problem in Lemma~\ref{lemma_trace_formula}. We apply this generic result to estimate the frame bounds for Gabor frames with Gaussian and Steinhaus random windows in Corollary~\ref{cor: m2 Steinhaus general} and Corollary~\ref{cor:gausstrace}. The obtained estimates depend only on the cardinality $\vert \Lambda \vert$ and for $N = O(M^2)$ have optimal scale $\frac{N}{M}$ that is attained by tight frames. 

To overcome the restriction $N = O(M^2)$ and study frame bounds for Gabor frames with smaller cardinality, we consider generic frame sets $\Lambda$.
Our main result in this part of the paper shows that Gabor frames with random window and random time-frequency set $\Lambda$ attain estimates similar to the Gaussian case with high probability.

\begin{theorem}\label{th_sing_val_rand_lambda}
Let $g$ be a random window given by ${g(j) = \frac{1}{\sqrt{M}}e^{2\pi i y_j}}$, $j\in \mathbb{Z}_M$, with $y_j$ independent uniformly distributed on~$[0,1)$. For any fixed even $m\in \mathbb{N}$, consider a Gabor system $(g, \Lambda)$ with a random set $\Lambda\subset \mathbb{Z}_M\times \mathbb{Z}_M$ constructed so that events $\{(k,\ell)\in \Lambda\}$ are independent for all $(k,\ell)\in \mathbb{Z}_M\times \mathbb{Z}_M$ and have probability $\tau = \frac{C\log M}{M^{\frac{m-1}{m}}}$, where $C>0$ is a sufficiently large constant. Then, with high probability (depending only on $m$, $\delta$, and $C$),
\begin{equation*}
\frac{|\Lambda|}{M}(1-\delta) \le A_{(g, \Lambda)} \le B_{(g, \Lambda)}\le \frac{|\Lambda|}{M}(1+\delta).
\end{equation*}
\end{theorem}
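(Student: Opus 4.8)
The plan is to control the entire spectrum of the frame operator $S = \sum_{\lambda \in \Lambda} \pi(\lambda) g g^* \pi(\lambda)^*$ at once, since $A_{(g,\Lambda)}$ and $B_{(g,\Lambda)}$ are its smallest and largest eigenvalues. Writing $\xi_\lambda = \mathbf{1}[\lambda \in \Lambda]$, so that the $\xi_\lambda$ are independent $\mathrm{Bernoulli}(\tau)$, the goal $\frac{|\Lambda|}{M}(1-\delta) \le A_{(g,\Lambda)} \le B_{(g,\Lambda)} \le \frac{|\Lambda|}{M}(1+\delta)$ is exactly $\| S - \tfrac{|\Lambda|}{M} I \| \le \delta \tfrac{|\Lambda|}{M}$. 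Two structural facts make this tractable. First, because the Steinhaus window has constant modulus $|g(j)|^2 = 1/M$, every diagonal entry of $\pi(\lambda) g g^* \pi(\lambda)^*$ equals $1/M$, so $S$ has constant diagonal $\frac{|\Lambda|}{M}$ and $\tilde T := S - \frac{|\Lambda|}{M} I$ is precisely the off-diagonal part of $S$. Second, the full time-frequency system is tight, $\sum_{\lambda \in \mathbb{Z}_M \times \mathbb{Z}_M} \pi(\lambda) g g^* \pi(\lambda)^* = M I$; combining these lets me write $\tilde T = \sum_\lambda (\xi_\lambda - \tau) W_\lambda$ with $W_\lambda = \pi(\lambda) g g^* \pi(\lambda)^* - \frac{1}{M} I$, a sum of independent, mean-zero Hermitian matrices.

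First I would reduce the operator-norm bound to a trace moment. For an even integer $p$ (which I expect to take proportional to $m$, namely $p = 2m$), $\|\tilde T\|^p \le \Tr[\tilde T^p]$, so by Markov it suffices to bound $\mathbb{E}\, \Tr[\tilde T^p]$ and compare it with $(\delta' \tau M)^p$. Expanding the trace gives a sum over $p$-tuples $(\lambda_1, \dots, \lambda_p)$ of $\mathbb{E}_\Lambda \prod_i (\xi_{\lambda_i} - \tau) \cdot \mathbb{E}_g \Tr[W_{\lambda_1} \cdots W_{\lambda_p}]$. Taking the expectation over $\Lambda$ first, independence and centering force every shift occurring among $\lambda_1, \dots, \lambda_p$ to occur at least twice; hence only partitions into $r \le p/2$ distinct shifts survive, each contributing a weight of order $\tau^r$. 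Taking the expectation over $g$, I would invoke Lemma~\ref{lemma_trace_formula}, whose purpose is exactly to evaluate such Steinhaus trace expectations through phase cancellation, turning $\mathbb{E}_g \Tr[W_{\lambda_1} \cdots W_{\lambda_p}]$ into a count of admissible frequency/time configurations.

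The main obstacle is the resulting combinatorial estimate. I expect the dominant contribution to come from the $r = p/2$ non-crossing pairings, which (using the identity $\sum_\nu |\langle \pi(\mu) g, \pi(\nu) g\rangle|^2 = M$, a consequence of tightness) should yield $\mathbb{E}\, \Tr[\tilde T^p] \lesssim M (\tau M)^{p/2}$, with all crossing and lower-$r$ configurations provably smaller. Establishing this requires carefully bookkeeping two independent families of constraints — the multiplicity-$\ge 2$ constraints inherited from the random $\Lambda$, and the phase-cancellation constraints supplied by Lemma~\ref{lemma_trace_formula} — and showing that the net power of $M$ is maximized by the pairings; the role of the random $\Lambda$ is precisely that the $\tau^r$ weights suppress the configurations that otherwise forced the earlier deterministic estimates to require $|\Lambda| = O(M^2)$. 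Granting the bound $M(\tau M)^{p/2}$ with $p = 2m$ and $\tau M = C \log M \cdot M^{1/m}$, Markov gives a failure probability of order $M (\tau M)^{-m} \delta^{-2m} = (C \delta^2 \log M)^{-m} \to 0$, which is where the logarithmic factor and the large constant $C$ in the definition of $\tau$ are consumed.

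Finally I would close the argument with a one-sided cardinality bound: since the centering $\frac{|\Lambda|}{M}$ is itself random, a Chernoff bound for the sum of independent Bernoullis gives $|\Lambda| \ge (1 - \eta)\tau M^2$ with high probability, so $\frac{|\Lambda|}{M} \ge (1-\eta)\tau M$. Intersecting this event with $\|\tilde T\| \le \delta' \tau M$ and choosing $\delta', \eta$ appropriately yields $\|S - \frac{|\Lambda|}{M} I\| \le \delta \frac{|\Lambda|}{M}$, which is the claim.
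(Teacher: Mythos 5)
Your setup is sound and genuinely different from the paper's: the centering identity $\tilde{T} = \sum_{\lambda}(\xi_\lambda-\tau)W_\lambda$ (which correctly uses tightness of the full system $\sum_\lambda \pi(\lambda)gg^*\pi(\lambda)^* = MI$ and the constant diagonal of Steinhaus rank-one projections), the Markov reduction via $\|\tilde{T}\|^p \le \Tr[\tilde{T}^p]$, the Chernoff step for $|\Lambda|$, and the closing arithmetic are all correct. However, there is a genuine gap at exactly the point you flag as ``the main obstacle'': the moment bound $\mathbb{E}\,\Tr[\tilde{T}^{2m}]\lesssim M(\tau M)^{m}$ is asserted, not proved, and it is the entire technical content of the theorem. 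Two concrete difficulties: (i) the pairing terms are complex-valued, so you cannot simply drop the distinctness constraints on the shifts in order to run the telescoping-by-tightness computation $\sum_{\nu}\langle g_\mu, g_\nu\rangle\langle g_\nu, g_\rho\rangle = M\langle g_\mu,g_\rho\rangle$ (with $g_\mu = \pi(\mu)g$) as an upper bound; inclusion--exclusion over coincidence patterns re-introduces exactly the multiplicity bookkeeping the centered Bernoullis were supposed to isolate. (ii) Lemma~\ref{lemma_trace_formula} as stated evaluates $\mathbb{E}_g\Tr H^m$ for a \emph{fixed} $\Lambda$, i.e.\ sums over tuples drawn from $\Lambda$ with the bijection condition on $(j,k)$; it does not directly give $\mathbb{E}_g\Tr[W_{\lambda_1}\cdots W_{\lambda_p}]$ for a prescribed tuple of shifts, so you would have to redo that phase-cancellation computation and then interleave its constraints with the multiplicity-$\ge 2$ constraints over all crossing and non-crossing configurations at every rank $r\le p/2$. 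That joint count is precisely the hard combinatorics, and ``provably smaller'' is nowhere proved.

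It is worth contrasting this with how the paper sidesteps the joint moment entirely. The paper conditions on $\Lambda$, applies Lemma~\ref{lemma_trace_formula} to write $\mathbb{E}_g\Tr H^m$ as a sum of terms $E_{j,k}\prod_{t}\sum_{\ell_t\in A_{k_t}}e^{2\pi i\ell_t(j_t-j_{t+1})/M}$, and uses the randomness of $\Lambda$ only through the Fourier-bias estimate (Corollary~\ref{cor_sum_rand_roots_of_unity}): with high probability over $\Lambda$, every nontrivial exponential sum $\sum_{\ell\in A_k}e^{2\pi i \ell q/M}$ is at most $C'\log M$. Once these sums are bounded out uniformly, the $\ell$-variables disappear and what remains is a deterministic count of the $(j,k)$-tuples with $E_{j,k}\ne 0$ via permutation cycle decompositions, yielding $\mathbb{E}_g\Tr H^m \le C'^m m!\,M\log^m M$ on the good event; Hoeffding for $|\Lambda|$ and Markov then finish the proof. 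This decoupling --- sup-norm control of random exponential sums instead of centered-Bernoulli multiplicity constraints --- is the idea your proposal is missing; without it, or without a completed pairing analysis of the kind you only sketch, the argument does not close.
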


This probabilistic (in terms of random choice of $\Lambda$) result can be interpreted as follows. While there could be ``bad'' choices of $\Lambda$ that lead to suboptimal frame bounds, most of the subframes $(g, \Lambda)\subset (g, \mathbb{Z}_M \times \mathbb{Z}_M)$ with $\vert\Lambda\vert = O(M^{1+\eta})$ for $\eta$ arbitrary small are well-conditioned.

\medskip

In the second part of the paper, we investigate NERF properties of Gabor frames. We show that the Gabor frame generated by the Alltop window has robustness to erasures similar to ETFs. This result follows from a more general result we prove for mutually unbiased bases (MUB) frames (see Section \ref{sec: deterministic NERF} for the definition). These frames were previously shown to be only weakly NERF in \cite{fickus2012numerically}, where the erasure rate scales with $\frac{1}{M}$ for the asymptotically maximal MUB of size $M^2$. In Theorem~\ref{th:MUBGuarantee}, we show that MUB frames are robust to up to 50\% erasures. Here, we formulate the result for Gabor frames, which is a corollary of Theorem~\ref{th:MUBGuarantee}.
\begin{theorem}
    Let $M \geq 5$ be prime and let $g_A$ with $g_A(j) = \frac{1}{\sqrt{M}}e^{2 \pi i j^3/ M}$ be the \emph{Alltop window}. Then the frame $(g_A, F \times \Z_M)$ with arbitrary $|F| = \alpha M$ is a $(p,C)$-numerically erasure robust frame for any $p\leq \frac{\alpha (C^2 - 1)^2}{\alpha(C^2 - 1)^2 + (C^2 + 1)^2}$.
\end{theorem}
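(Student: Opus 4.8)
The plan is to recognize $(g_A, F\times\Z_M)$ as a mutually unbiased bases frame assembled from exactly $|F| = \alpha M$ of its constituent orthonormal bases, and then to invoke Theorem~\ref{th:MUBGuarantee} directly. First I would partition the frame set $F\times\Z_M$ into $|F|$ blocks indexed by the first coordinate $a\in F$, where (with the convention $\pi(a,b)g(j)=e^{2\pi i bj/M}g(j-a)$) the $a$-th block $B_a=\{\pi(a,b)g_A : b\in\Z_M\}$ consists of all modulations of the single translate $g_A(\cdot-a)$. Since $g_A$ has constant modulus $1/\sqrt M$, for fixed $a$ one computes
\begin{align*}
\langle \pi(a,b)g_A,\, \pi(a,b')g_A\rangle = \sum_{j\in\Z_M} e^{2\pi i (b-b')j/M}\,|g_A(j-a)|^2 = \frac1M\sum_{j\in\Z_M} e^{2\pi i (b-b')j/M} = \delta_{b,b'},
\end{align*}
so each $B_a$ is an orthonormal basis of $\C^M$. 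This step uses only the unimodularity of the window and holds for every $a$.

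The heart of the argument is to show that distinct blocks are mutually unbiased, i.e.\ $|\langle \pi(a,b)g_A,\, \pi(a',b')g_A\rangle|^2 = 1/M$ for $a\neq a'$ and all $b,b'$. Writing out the inner product and using the cubic phase of the Alltop window, the difference of exponents $(j-a)^3-(j-a')^3$ is \emph{quadratic} in $j$ with leading coefficient $3(a'-a)$, so the inner product equals $\frac1M$ times a complete quadratic Gauss sum $\sum_{j\in\Z_M} e^{2\pi i (3(a'-a)j^2+\beta j+\gamma)/M}$ for suitable $\beta,\gamma$. I would then appeal to the classical evaluation of quadratic Gauss sums: for $M$ an odd prime and leading coefficient coprime to $M$, this sum has modulus exactly $\sqrt M$, the linear and constant terms only rotating the phase. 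Hence each cross inner product has modulus $\frac1M\sqrt M = 1/\sqrt M$, giving squared overlap $1/M$ as required. The hypothesis ``$M\ge 5$ prime'' enters precisely here: primality yields the clean Gauss-sum modulus together with the invertibility of $a'-a\ne 0$, while $M\ge 5$ (in particular $M\ne 3$) guarantees that $3$ is a unit mod $M$, so the leading coefficient $3(a'-a)$ is nonzero and the Gauss sum is nondegenerate.

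With these two facts, $(g_A, F\times\Z_M)$ is exactly a union of $|F|=\alpha M$ pairwise mutually unbiased orthonormal bases, i.e.\ an MUB frame. The statement then follows by applying Theorem~\ref{th:MUBGuarantee} with this number of bases, which yields $(p,C)$-numerical erasure robustness for $p\le \frac{\alpha(C^2-1)^2}{\alpha(C^2-1)^2+(C^2+1)^2}$.

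The main obstacle I anticipate is the mutual-unbiasedness step: one must correctly track the reduction of the cubic phase to a quadratic in $j$ and justify the Gauss-sum modulus, being careful that the degeneracies excluded by ``$M\ge 5$ prime'' are exactly those that would otherwise break the $1/\sqrt M$ overlap. Everything else—the block orthonormality and the invocation of Theorem~\ref{th:MUBGuarantee}—is routine once the MUB structure has been established.
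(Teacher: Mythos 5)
Your proof is correct and takes essentially the same route as the paper: there, too, the statement is obtained as a corollary of Theorem~\ref{th:MUBGuarantee} by observing that $(g_A, F\times \Z_M)$ is a union of $|F|=\alpha M$ mutually unbiased orthonormal bases, namely the blocks $\{\pi(a,b)g_A : b\in\Z_M\}$, $a\in F$, of the Alltop frame. The only difference is that the paper cites Alltop's result for the mutual unbiasedness, whereas you prove it inline via the quadratic Gauss-sum evaluation (with $M\ge 5$ prime guaranteeing the leading coefficient $3(a'-a)$ is a unit), which is a sound and self-contained substitute.
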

Note that similarly to Theorem~\ref{th:ETF} for ETFs, Gabor frames can provide stable reconstruction with additive noise and adversarial erasures up to 50\%.

\subsection{Notation}
In this section, we briefly introduce the notation that will be used throughout the paper.
Let $\mathbb{S}^{M-1} = \{x\in \mathbb{C}^M,  \Vert x\Vert _2 = 1\}$ denote the complex unit sphere. For a matrix $A$, let us denote the smallest and largest singular values of $A$ by $\sigma_{\min}(A)$ and $\sigma_{\max}(A)$ respectively.
We view a vector $x\in\mathbb{C}^M$ as a function $x:\mathbb{Z}_M\to~\mathbb{C}$, that is, all the operations on indices are done modulo $M$ and $x(m-k) = x(M+m -k)$. We define the normalized discrete Fourier transform matrix as $\mathcal{F}_M = \frac{1}{\sqrt{M}} \left( e^{-2\pi i k\ell/M} \right)_{k,\ell \in \Z_M}$. The identity $M\times M$ matrix is denoted by $I_M$.

Throughout the paper, we consider the following types of random vectors. \emph{Steinhaus} random vector $g\in \mathbb{C}^M$ is defined so that ${g(m) = \frac{1}{\sqrt{M}}e^{2\pi i y_m}}$, $m\in \mathbb{Z}_M$, and $y_m$ are independent uniformly distributed on~$[0,1)$. We denote a complex \emph{Gaussian} random vector distribution by  $g\sim \mathcal{C}\mathcal{N}\left( \mu, \Sigma\right)$, where $\mu = \mathbb{E}(g)$ and $\Sigma$ denotes the covariance matrix. Finally, we write $g\sim\text{Unif}(\mathbb{S}^{M-1})$ for a random vector uniformly distributed on the complex unit sphere.

\bigskip

The remaining part of the paper is organized as follows. We provide an overview of relevant frame theoretic background in Section \ref{sec: frame theory background}. In Section \ref{sec: frame bounds general}, we develop general analysis tools for frame bounds estimation, which we then apply to obtain our results for random and deterministic Gabor frames in Section~\ref{sing_val_sec}. 
We formulate and prove numerical erasure-robustness for MUB frames and deterministic Gabor frames in Section \ref{sec: deterministic NERF}. Finally, Section \ref{sec_num_res_sing_val} contains numerical results on the frame bounds of random Gabor frames with small cardinality and their robustness to erasures. There, we also discuss open problems and the direction for further research. The Appendix contains the probabilistic tools and results used in this paper.

\section{Frame theory background}\label{sec: frame theory background}
Before we dive into the evaluation of Gabor frame bounds, we discuss the necessary background on frame theory, and Gabor frames in particular. For a comprehensive background on frames in finite dimensions, we refer the reader to \cite{finite_frames_book}.

\begin{definition}\label{def: frame}
  Let $\mathcal{H}$ be a Hilbert space. A set of vectors ${\Phi = \{\varphi_j\}_{j \in J}\subset \mathcal{H}}$ is called a \emph{frame} with \emph{frame bounds} $0< A \leq B$ if, for any $x\in \mathcal{H}$, the following inequalities hold
\begin{equation}\label{eq: frame_def_intro}
A\Vert x\Vert^2\le \sum_{j \in J} \vert\langle x, \varphi_j\rangle\vert^2 \le B\Vert x\Vert^2.
\end{equation}
\end{definition}

In this paper, we focus exclusively on the finite dimensional setup, that is, when $\mathcal{H} = \mathbb{C}^M$. Note that in this case, the inequality~\eqref{eq: frame_def_intro} holds for some $0< A \leq B<\infty$ if and only if ${\spa (\Phi) = \mathbb{C}^M}$. That is, in the finite dimensional case, the notion of a frame is equivalent to the notion of a spanning set. In particular, we have $\vert\Phi\vert = N\ge M$. 

By a slight abuse of notation, we identify a frame $\Phi = \{\varphi_j\}_{j = 1}^N\subset \mathbb{C}^M$ with its \emph{synthesis matrix} $\Phi$, having the frame vectors $\varphi_j$ as its columns. The adjoint $\Phi^*$ of the synthesis matrix is called the \emph{analysis matrix} and the product $\Phi \Phi^*$ is called the \emph{frame operator} of the frame $\Phi$. 
The values $\langle x, \varphi_j\rangle$, $j\in \{1,\dots, N\}$, are called the \emph{frame coefficients} of $x$. The vector of frame coefficients can be written as $\Phi^* x$.

To reconstruct a vector from its frame coefficients, one can use a \emph{dual frame} $\tilde{\Phi} = \{\tilde{\varphi}_j\}_{j = 1}^N$, defined so that $x = \sum_{j=1}^N \langle x, \varphi_j\rangle \tilde{\varphi}_j$, for each $x\in \mathbb{C}^M$.  A dual frame is not uniquely defined if $|\Phi|>M$. The \emph{standard dual frame} of $\Phi$ is given by the Moore-Penrose pseudoinverse $(\Phi \Phi^*)^{-1}\Phi$ of the synthesis matrix $\Phi$. 

Using the matrix notation we just introduced, the \emph{optimal frame bounds} can be obtained as 
\begin{equation}\label{eq: lower frame bound def}
    A_\Phi = \inf_{x\in \mathbb{C}^M\setminus \{0\}} \frac{\sum_{j = 1}^N \vert\langle x, \varphi_j\rangle\vert^2}{\Vert x\Vert_2^2} = \min_{x\in \mathbb{S}^{M-1}}\Vert \Phi^*x\Vert_2^2 = \sigma_{\min}^2(\Phi^*);
\end{equation}
\begin{equation}\label{eq: upper frame bound def}
    B_\Phi = \sup_{x\in \mathbb{C}^M\setminus \{0\}} \frac{\sum_{j = 1}^N \vert\langle x, \varphi_j\rangle\vert^2}{\Vert x\Vert_2^2} =  \max_{x\in \mathbb{S}^{M-1}}\Vert \Phi^*x\Vert_2^2 = \sigma_{\max}^2(\Phi^*).
\end{equation}

\smallskip

In the case when the frame coefficients are corrupted by noise (e.g., due to measurement error or quantization), frame bounds serve as a measure of the reconstructed signal distortion. Indeed, let $c = \Phi^*x+ \delta\in \mathbb{C}^N$ be a vector of noisy frame coefficients of a signal $x\in \mathbb{C}^M$ with respect to the frame $\Phi$, where $\delta\in \mathbb{C}^N$ is a noise vector. Then an estimate $\tilde{x}$ of the initial signal $x$ obtained from $c$ using the standard dual frame of $\Phi$ is given by
\begin{equation*}
\tilde{x} = (\Phi \Phi^*)^{-1}\Phi c = x + (\Phi \Phi^*)^{-1}\Phi\delta,
\end{equation*}
\noindent and the reconstruction error
\begin{equation*}
\Vert \tilde{x} - x\Vert _2^2 \le \Vert (\Phi \Phi^*)^{-1}\Phi \Vert _2^2\Vert \delta\Vert _2^2 = \frac{\Vert \delta\Vert _2^2}{\sigma_{\min}^2(\Phi^*)} = \frac{\Vert \delta\Vert _2^2}{A_\Phi}.
\end{equation*}
Moreover, for a given signal to noise ratio $\SNR = \frac{\Vert \Phi^* x\Vert _2}{\Vert \delta\Vert _2}$, the norm of the reconstruction error $\Vert (\Phi \Phi^*)^{-1}\Phi\delta\Vert _2$ compares to the norm of the initial signal $\Vert x\Vert _2$ as
\begin{equation*}
\frac{\Vert (\Phi \Phi^*)^{-1}\Phi\delta\Vert _2}{\Vert x\Vert _2}\le \frac{\Cond(\Phi^*)}{\SNR},
\end{equation*}
where
\begin{equation*}
\begin{split}
\Cond(\Phi^*) & = \sup_{x\in \mathbb{C}^M\setminus \{0\}}\sup_{\delta\in \mathbb{C}^N\setminus \{0\}} \SNR\frac{\Vert (\Phi \Phi^*)^{-1}\Phi\delta\Vert _2}{\Vert x\Vert _2} \\
& = \sup_{x\in \mathbb{C}^M\setminus \{0\}} \frac{\Vert \Phi^* x\Vert _2}{\Vert x\Vert _2} \sup_{\delta\in \mathbb{C}^N\setminus \{0\}}\frac{\Vert (\Phi \Phi^*)^{-1}\Phi\delta\Vert _2}{\Vert \delta\Vert _2} = \frac{\sigma_{\max}(\Phi^*)}{\sigma_{\min}(\Phi^*)} = \frac{\sqrt{B_\Phi}}{\sqrt{A_\Phi}}.
\end{split}
\end{equation*}
\noindent That is, $\Cond(\Phi^*)$ is the condition number of the analysis matrix of the frame~$\Phi$.

Thus, the ratio between the (optimal) frame bounds measure the robustness of the signal reconstruction from noisy frame coefficients, and the closer the frame bound are, the more \emph{well-conditioned} frame $\Phi$ is. In the case when $A_\Phi = B_\Phi$, the frame $\Phi$ is called \emph{tight}. Furthermore, in the case the frame vectors are normalized so that $\Vert \varphi_j\Vert_2 = 1$, for all $j\in \{1, \dots, N\}$, $\Phi$ is called a \emph{unit norm frame}.

\subsection{Coherence and Welch bounds}
In many signal processing scenarios, the ``quality'' of a frame is measured by how well-spread in space the frame vectors are. One way to formalize this is via the notion of \emph{frame coherence}. 

\begin{definition}[Coherence]
    Let $\Phi = \{\varphi_j\}_{j = 1}^N$ be a unit-norm frame. We define the \emph{coherence} $\mu(\Phi)$ of $\Phi$ as
    \begin{align*}
        \mu(\Phi) := \max_{j \neq j'} |\langle \varphi_j, \varphi_{j'} \rangle|.
    \end{align*}
\end{definition}

We refer to frame coherence when addressing numerical robustness to erasures for a specific class of frames in Section~\ref{sec: deterministic NERF}.
A related notion is \emph{frame potential} defined in~\cite{benedetto2003finite}. 

\begin{definition}[Frame potential]
Let $\Phi = \{\varphi_j\}_{j = 1}^N$ be a frame. We define the \emph{frame potential} $\mathrm{FP}(\Phi)$ of $\Phi$ as 
\begin{align*}
    \mathrm{FP}(\Phi) := \|\Phi^* \Phi\|_{\mathrm{HS}}^2 =  \sum_{j = 1}^N \sum_{j' = 1}^N |\langle \varphi_j, \varphi_{j'}\rangle|^2.
\end{align*}
\end{definition}
The following proposition gives a general lower bound as well as a characterization of tight frames. We use it to prove a general lower bound on the lower frame bounds $A_\Phi$ of unit-norm frames $\Phi$ in Section~\ref{sec: deterministic NERF}.

\begin{proposition}[Zero-th order Welch bound \cite{strohmer2003grassmannian}]\label{pr:zerowelch}
Let $\Phi = \{\varphi_j\}_{j = 1}^N$ be a unit-norm frame. Then we have that
\begin{align*}
    \mathrm{FP}(\Phi) \geq \frac{N^2}{M}. 
\end{align*}
Moreover, equality is achieved if and only if $\Phi$ is a tight frame. 

\end{proposition}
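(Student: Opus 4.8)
The plan is to diagonalize the problem through the spectrum of the frame operator. First I would use that $\Phi^*\Phi$ is Hermitian to rewrite the frame potential as
\[
\mathrm{FP}(\Phi) = \|\Phi^*\Phi\|_{\mathrm{HS}}^2 = \Tr\big((\Phi^*\Phi)^2\big).
\]
The nonzero eigenvalues of the $N\times N$ Gram matrix $\Phi^*\Phi$ coincide, with multiplicity, with the eigenvalues of the $M\times M$ frame operator $\Phi\Phi^*$. Since $\Phi$ is a frame, $\Phi\Phi^*$ is positive definite, so it has exactly $M$ strictly positive eigenvalues $\lambda_1,\dots,\lambda_M$, and $\Phi^*\Phi$ has these together with $N-M$ zero eigenvalues. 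Consequently $\mathrm{FP}(\Phi) = \sum_{i=1}^M \lambda_i^2$.

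Next I would extract a linear constraint on the eigenvalues from the unit-norm hypothesis. Taking the trace of the Gram matrix gives $\Tr(\Phi^*\Phi) = \sum_{j=1}^N \|\varphi_j\|_2^2 = N$, and since the trace is invariant under this exchange, $\Tr(\Phi^*\Phi) = \Tr(\Phi\Phi^*) = \sum_{i=1}^M \lambda_i$. Hence $\sum_{i=1}^M \lambda_i = N$.

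The lower bound then follows from the Cauchy--Schwarz inequality (equivalently, the comparison of the quadratic and arithmetic means applied to $\lambda_1,\dots,\lambda_M$):
\[
\sum_{i=1}^M \lambda_i^2 \;\geq\; \frac{1}{M}\Big(\sum_{i=1}^M \lambda_i\Big)^2 \;=\; \frac{N^2}{M},
\]
which is exactly $\mathrm{FP}(\Phi)\geq N^2/M$. For the equality characterization, Cauchy--Schwarz is tight precisely when all eigenvalues are equal, i.e. $\lambda_1 = \cdots = \lambda_M = N/M$. This is equivalent to $\Phi\Phi^* = \tfrac{N}{M} I_M$, which by \eqref{eq: lower frame bound def} and \eqref{eq: upper frame bound def} means $A_\Phi = B_\Phi = N/M$, that is, $\Phi$ is tight.

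I do not anticipate a genuine obstacle here; the only points requiring a little care are the bookkeeping of eigenvalue multiplicities when passing between the $N\times N$ Gram matrix and the $M\times M$ frame operator, and correctly identifying $\|\Phi^*\Phi\|_{\mathrm{HS}}^2$ with $\Tr\big((\Phi^*\Phi)^2\big)$ via Hermicity rather than with $\Tr(\Phi^*\Phi)$.
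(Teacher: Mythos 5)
Your proof is correct and complete. Note that the paper itself does not prove this proposition --- it is imported from the cited reference --- so there is no internal proof to compare against; your argument (passing to the eigenvalues $\lambda_1,\dots,\lambda_M$ of the frame operator, using the trace constraint $\sum_i \lambda_i = \Tr(\Phi^*\Phi) = N$ from the unit-norm hypothesis, and applying Cauchy--Schwarz with equality iff $\Phi\Phi^* = \tfrac{N}{M}I_M$) is precisely the standard proof of the zero-th order Welch bound, and it correctly handles the two points you flag, namely the multiplicity bookkeeping between $\Phi^*\Phi$ and $\Phi\Phi^*$ and the identification $\|\Phi^*\Phi\|_{\mathrm{HS}}^2 = \Tr\bigl((\Phi^*\Phi)^2\bigr)$ via Hermiticity.
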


\subsection{Gabor frames} \label{sec:gabor}
The purpose of this section is to introduce Gabor frames and some of their basic properties. For a more detailed discussion, the reader is referred to~\cite{pfander2}.

\begin{definition}[Gabor frame] For a \emph{window} $g \in \C^M \setminus \{0\}$ and $\Lambda \subset \Z_M \times \Z_M$ we define the \emph{Gabor system} generated by \emph{window} $g$ and set $\Lambda$ as
    \begin{align*}
        (g,\Lambda) = \{\pi(\lambda)g : \lambda \in \Lambda\}.
    \end{align*}
Here, the \emph{time-frequency shift} operator $\pi(k,\ell) = M_\ell T_k$ is a composition of the \emph{time shift} by $k$ defined as $T_kx = (x(j - k))_{j \in \Z_M}$, and the \emph{frequency shift (modulation)} by $\ell$ defined as $M_{\ell}x = (e^{2 \pi i \ell j / M} x(j))_{j \in \Z_M}$.
In the case that $(g,\Lambda)$ spans $\C^M$, we call $(g,\Lambda)$ a \emph{Gabor frame}.
\end{definition}

    A useful property of the time-shift and frequency-shift operators is that they commute up to a global phase factor~\cite{pfander2}. 
    \begin{lemma}\label{lm:timeshiftfrequencyshiftcommuteuptophase}
    For $\lambda, \mu \in \Z_M \times \Z_M$ we have that
    \begin{align*}
        \pi(\lambda)\pi(\mu) = c_{\lambda,\mu} \pi(\lambda + \mu) = c_{\lambda,\mu} \overline{c_{\mu,\lambda}}\pi(\mu)\pi(\lambda)
    \end{align*}
    where $c_{(k,\ell), (k',\tilde{\ell})} = e^{2\pi i k \tilde{\ell}}$. In particular, we have that 
    
    \begin{align*}
         \pi(\lambda)^* = \pi(\lambda)^{-1} = c_{\lambda, \lambda}\pi(-\lambda)
    \end{align*}
\end{lemma}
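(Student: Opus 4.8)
The plan is to verify every identity by a direct pointwise computation, applying each operator to an arbitrary vector $x \in \C^M$ and reading off the resulting phase factors; there is no conceptual difficulty, only careful index bookkeeping modulo $M$.

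First I would establish the central identity $\pi(\lambda)\pi(\mu) = c_{\lambda,\mu}\pi(\lambda+\mu)$. Writing $\lambda = (k,\ell)$ and $\mu = (k',\tilde\ell)$ and using the definitions $\pi(k,\ell) = M_\ell T_k$, $(T_k x)(j) = x(j-k)$ and $(M_\ell x)(j) = e^{2\pi i \ell j/M} x(j)$, I compute $(\pi(\mu)x)(j) = e^{2\pi i \tilde\ell j/M} x(j-k')$ and then apply $\pi(\lambda)$ to obtain $(\pi(\lambda)\pi(\mu)x)(j) = e^{2\pi i \ell j/M} e^{2\pi i \tilde\ell (j-k)/M} x(j-k-k')$. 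Separating the $j$-independent scalar, this equals that scalar times $e^{2\pi i (\ell+\tilde\ell)j/M} x\big(j-(k+k')\big) = (\pi(\lambda+\mu)x)(j)$, where I use $\lambda + \mu = (k+k', \ell+\tilde\ell)$ and that all index arithmetic is modulo $M$. The scalar is precisely the phase $c_{\lambda,\mu}$, which proves the first equality.

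For the second equality I would swap the roles of $\lambda$ and $\mu$ in the identity just proved to get $\pi(\mu)\pi(\lambda) = c_{\mu,\lambda}\pi(\mu+\lambda)$, and then invoke commutativity of addition in $\Z_M\times\Z_M$, so $\mu+\lambda = \lambda+\mu$. Since $|c_{\mu,\lambda}| = 1$, I may invert to write $\pi(\lambda+\mu) = \overline{c_{\mu,\lambda}}\,\pi(\mu)\pi(\lambda)$; substituting this into $\pi(\lambda)\pi(\mu) = c_{\lambda,\mu}\pi(\lambda+\mu)$ yields $\pi(\lambda)\pi(\mu) = c_{\lambda,\mu}\overline{c_{\mu,\lambda}}\,\pi(\mu)\pi(\lambda)$, the claimed chain of equalities.

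Finally, for the ``in particular'' statement I would first note that $T_k$ is a coordinate permutation and $M_\ell$ is multiplication by unimodular numbers, so both are unitary and hence so is $\pi(\lambda) = M_\ell T_k$; this gives $\pi(\lambda)^* = \pi(\lambda)^{-1}$. To identify the inverse, I set $\mu = -\lambda$ in the first identity: since $\pi(0) = I$, this gives $\pi(\lambda)\pi(-\lambda) = c_{\lambda,-\lambda} I$, hence $\pi(\lambda)^{-1} = \overline{c_{\lambda,-\lambda}}\,\pi(-\lambda)$. A one-line check from the definition of $c$ shows $\overline{c_{\lambda,-\lambda}} = c_{\lambda,\lambda}$, completing the proof. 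The only place requiring care is tracking the exact phase in the first step, where the noncommutativity of $T_k$ and $M_\ell$ is precisely what produces the factor $c_{\lambda,\mu}$; everything else is routine.
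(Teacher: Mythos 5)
Your proof is correct and complete. There is nothing in the paper to compare it against: the lemma is stated as a known fact with a citation to the literature and no proof is given, so a direct pointwise computation like yours is exactly what the citation stands in for. One detail worth flagging: with the paper's conventions $(T_k x)(j) = x(j-k)$, $(M_\ell x)(j) = e^{2\pi i \ell j/M}x(j)$, $\pi(k,\ell) = M_\ell T_k$, the $j$-independent scalar you extract is $e^{-2\pi i k\tilde{\ell}/M}$, whereas the paper defines $c_{(k,\ell),(k',\tilde{\ell})} = e^{2\pi i k\tilde{\ell}}$, which is identically $1$ as written (the $/M$ is missing) and, once normalized, is the complex conjugate of your scalar; this is a typo in the paper's statement, not a flaw in your argument. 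All downstream identities, namely the commutation relation and $\pi(\lambda)^* = c_{\lambda,\lambda}\pi(-\lambda)$, are insensitive to this sign convention, as your one-line check $\overline{c_{\lambda,-\lambda}} = c_{\lambda,\lambda}$ already demonstrates.
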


Furthermore, many properties of the time-frequency shift operators are determined by their close relation to the Fourier transform. As such, this relation can be used to show the following result.
\begin{proposition}[Fourier duality]\label{prop: Fourier duality}
    Let $g \in \C^M$ and $\Lambda \subset \Z_M \times \Z_M$. Consider the Gabor system $(\mathcal{F}_M g, \Lambda')$ with $\Lambda' := \{(\ell, -k) : (k,\ell) \in \Lambda\}$. Then, the frame bounds of $(\mathcal{F}_Mg, \Lambda)$ and $(g, \Lambda')$ coincide. 
\end{proposition}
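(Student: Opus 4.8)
The plan is to reduce the statement to a comparison of spectra of frame operators, and to produce that comparison from a single covariance identity between the DFT and the time-frequency shifts. Recall from \eqref{eq: lower frame bound def}--\eqref{eq: upper frame bound def} that $A_\Phi = \sigma_{\min}^2(\Phi^*)$ and $B_\Phi = \sigma_{\max}^2(\Phi^*)$ are the extreme eigenvalues of the frame operator $S = \Phi\Phi^* = \sum_j \varphi_j\varphi_j^*$. Hence it suffices to exhibit a single unitary $U$ conjugating the frame operator of $(\mathcal{F}_M g,\Lambda)$ to that of $(g,\Lambda')$: unitarily equivalent operators have identical spectra, and therefore identical extreme eigenvalues.

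The engine of the proof is the covariance of $\mathcal{F}_M$ with the time-frequency shifts. First I would record the two elementary intertwining rules $\mathcal{F}_M T_k = M_{-k}\mathcal{F}_M$ and $\mathcal{F}_M M_\ell = T_\ell \mathcal{F}_M$, each of which is a one-line change-of-variables computation in the definition of $\mathcal{F}_M$. Composing these and reordering the resulting $T_\ell M_{-k}$ by means of Lemma~\ref{lm:timeshiftfrequencyshiftcommuteuptophase}, I expect to obtain a relation of the form
\[
\mathcal{F}_M\,\pi(k,\ell)\,\mathcal{F}_M^{-1} = e^{2\pi i k\ell/M}\,\pi(\ell,-k).
\]
The point to emphasize is that the induced map $(k,\ell)\mapsto(\ell,-k)$ on the time-frequency plane is \emph{exactly} the map defining $\Lambda'$, so conjugation by the DFT permutes the index set of a Gabor system by precisely the rotation appearing in the statement.

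With this identity in hand, I would take $U = \mathcal{F}_M$ (or its adjoint) and conjugate the frame operator $S = \sum_{\lambda\in\Lambda}\pi(\lambda)\,\mathcal{F}_M g\,g^*\mathcal{F}_M^*\,\pi(\lambda)^*$. The crucial feature is that $gg^*$ is sandwiched symmetrically between $\pi(\lambda)$ and $\pi(\lambda)^*$, so when I push $\mathcal{F}_M$ through each factor the unimodular phase from the covariance identity and its conjugate appear on the two sides and cancel. What survives is $\sum_\lambda \pi(\lambda'')\,g\,g^*\,\pi(\lambda'')^*$ with $\lambda''$ running over the rotated index set, i.e.\ exactly the frame operator of the corresponding Gabor system over window $g$. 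This yields the claimed equality of frame bounds. (Equivalently, one may package the same computation through two invariances of the frame bounds that are worth stating separately: applying a fixed unitary to every frame vector leaves $\sum_j|\langle x,\varphi_j\rangle|^2$ unchanged after the substitution $x\mapsto U^*x$, and multiplying each frame vector by a unimodular scalar leaves every term $|\langle x,\varphi_j\rangle|^2$ invariant.)

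The main obstacle is purely bookkeeping: getting the phase $e^{2\pi i k\ell/M}$ and, above all, the \emph{orientation} of the rotation correct. A sign slip in either elementary intertwining rule, or an inattentive application of the commutation lemma, flips the induced map $(k,\ell)\mapsto(\ell,-k)$ to its reflection $(k,\ell)\mapsto(-\ell,k)$ and hence replaces $\Lambda'$ by $-\Lambda'$; these two index sets give genuinely different Gabor systems in general, so the direction in which one conjugates (by $\mathcal{F}_M$ versus $\mathcal{F}_M^{-1}$) must be matched to the definition of $\Lambda'$ with care. Everything else is routine.
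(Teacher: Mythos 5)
Your proposal is correct and follows essentially the same route as the paper: the paper's proof also rests on the single covariance identity $\mathcal{F}_M M_\ell T_k g = e^{2\pi i k\ell/M} M_{-k} T_\ell \mathcal{F}_M g$, conjugates the frame operator by the unitary $\mathcal{F}_M$ (with the unimodular phases cancelling in the outer products), and concludes that unitarily equivalent frame operators have the same spectrum, hence the same frame bounds. Your closing caution about the orientation of the induced rotation $(k,\ell)\mapsto(\ell,-k)$ is well placed, since that bookkeeping is precisely the delicate point in matching the conjugation direction to the definition of $\Lambda'$.
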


\begin{proof}
    By direct computation we see that $\mathcal{F}_M M_{\ell} T_k g = e^{2\pi i k\ell/M} M_{- k} T_{\ell} \mathcal{F}_M g$. Therefore, 

    \begin{align*}
        \mathcal{F}_M \Phi_{(g , \Lambda')} \Phi^*_{(g, \Lambda')}\mathcal{F}_M^*(j,j') 
        &= \sum_{(k,l) \in \Lambda} M_{- k} T_{\ell} \mathcal{F}_M g(j) \overline{M_{- k} T_{\ell} \mathcal{F}_M g(j')} \\
        &= \Phi_{(\mathcal{F}_Mg, \Lambda)} \Phi^*_{(\mathcal{F}_Mg, \Lambda)}
    \end{align*}
    Let $v$ be an eigenvector of $\Phi_{(g, \Lambda')} \Phi^*_{(g, \Lambda')}$ with eigenvalue $\lambda$. Then we claim that $\mathcal{F}_M v$ is an eigenvector of $\mathcal{F}_M \Phi_{(g , \Lambda')} \Phi^*_{(g, \Lambda')}\mathcal{F}_M^*(j,j')$ with the same eigenvalue. As the columns of $\mathcal{F}_M$ form an orthonormal basis, we see that $\mathcal{F}_M^*\mathcal{F}_M = I_M$. Then clearly,

    \begin{align*}
        \Phi_{(\mathcal{F}_Mg, \Lambda)} \Phi^*_{(\mathcal{F}_Mg, \Lambda)}\mathcal{F}_M v =  \mathcal{F}_M \Phi_{(g , \Lambda')} \Phi^*_{(g, \Lambda')}\mathcal{F}_M^* \mathcal{F}_M v = \mathcal{F}_M \Phi_{(g , \Lambda')} \Phi^*_{(g, \Lambda')} v = \lambda \mathcal{F}_M v.
    \end{align*}
    This proves that the eigenvalues of $\Phi_{(\mathcal{F}_Mg, \Lambda)} \Phi^*_{(\mathcal{F}_Mg, \Lambda)}$ and $\Phi_{(g, \Lambda')} \Phi^*_{(g, \Lambda')}$ coincide and therefore so do the frame bounds. 
\end{proof}

\section{Estimating frame bounds: general case}\label{sec: frame bounds general}

We start by providing some useful results for estimating optimal frame bounds for general frames. For a frame $\Phi = \{\varphi_j\}_{j = 1}^N\subset \mathbb{C}^M$, let us define a matrix 
\begin{equation}\label{eq: matrix H def}
    H_{\Phi} = \Phi \Phi^* - \frac{N}{M} I_M.
\end{equation}
Following the approach of \cite{fickus2012numerically}, we rely on the following lemma and its corollary.

\begin{lemma}\label{lem: trace}
Let $\Phi = \{\varphi_j\}_{j = 1}^N\subset \mathbb{C}^M$ be a frame and $m \in \N$. Assume that there exists $\delta < 1$ such that for the matrix $H_{\Phi}$ defined in equation~\eqref{eq: matrix H def}
\begin{align*}
    \frac{M^{2m}}{N^{2m}}\Tr\left(H_{\Phi}^{2m}\right) \leq \delta^{2m},
\end{align*}
\noindent Then, for the singular values $\sigma_j(\Phi)$ of the matrix $\Phi$, we have that $\sigma_j^2 \in [(1 - \delta) \frac{N}{M}, (1 + \delta)\frac{N}{M}]$. \linebreak In particular, 
    \begin{align*}
        (1 - \delta) \frac{N}{M} \leq A_\Phi \leq B_\Phi \leq (1 + \delta)\frac{N}{M}.
    \end{align*}
\end{lemma}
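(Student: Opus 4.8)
The plan is to exploit the fact that $H_\Phi$ is Hermitian and that raising it to an even power lets its trace control the entire spectrum in absolute value. First I would note that $\Phi\Phi^*$ is an $M\times M$ Hermitian positive semidefinite matrix whose eigenvalues are exactly the squared singular values $\sigma_1^2(\Phi),\dots,\sigma_M^2(\Phi)$ of $\Phi$. Consequently $H_\Phi = \Phi\Phi^* - \frac{N}{M}I_M$ is Hermitian with the same eigenvectors and with real eigenvalues $\mu_j = \sigma_j^2(\Phi) - \frac{N}{M}$ for $j = 1,\dots,M$.

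Next, since $2m$ is even, $H_\Phi^{2m}$ is Hermitian and positive semidefinite, with eigenvalues $\mu_j^{2m}\geq 0$. Diagonalizing, its trace equals the sum of these eigenvalues, so that
\begin{align*}
\Tr\!\left(H_\Phi^{2m}\right) = \sum_{j=1}^M \left(\sigma_j^2(\Phi) - \frac{N}{M}\right)^{2m} \geq \max_{1\leq j\leq M}\left|\sigma_j^2(\Phi) - \frac{N}{M}\right|^{2m},
\end{align*}
because every summand is nonnegative and the full sum dominates any single term. This step — observing that an even power turns the trace into an $\ell^{2m}$-type bound on the whole spectrum — is the heart of the argument; everything else is bookkeeping, so I do not expect a genuine obstacle here.

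Combining the displayed inequality with the hypothesis $\frac{M^{2m}}{N^{2m}}\Tr(H_\Phi^{2m}) \leq \delta^{2m}$, which rearranges to $\Tr(H_\Phi^{2m}) \leq \left(\delta\frac{N}{M}\right)^{2m}$, I would take $2m$-th roots to obtain $\max_j\left|\sigma_j^2(\Phi) - \frac{N}{M}\right| \leq \delta\frac{N}{M}$. Hence each $\sigma_j^2(\Phi)$ lies in the interval $\left[(1-\delta)\frac{N}{M},\,(1+\delta)\frac{N}{M}\right]$, which is the first assertion. Finally, identifying $A_\Phi = \sigma_{\min}^2(\Phi^*) = \min_j \sigma_j^2(\Phi)$ and $B_\Phi = \sigma_{\max}^2(\Phi^*) = \max_j \sigma_j^2(\Phi)$ via \eqref{eq: lower frame bound def} and \eqref{eq: upper frame bound def} yields the claimed frame-bound inequalities. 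The only point requiring a little care is that, since $\Phi$ is a frame, it has full row rank $M$, so all $M$ eigenvalues of $\Phi\Phi^*$ are genuinely positive and the uniform estimate bounds $A_\Phi$ away from $0$ (this is consistent with $\delta < 1$ forcing the lower endpoint $(1-\delta)\frac{N}{M}$ to be strictly positive).
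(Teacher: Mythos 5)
Your proof is correct and follows essentially the same route as the paper: both arguments diagonalize the self-adjoint matrix $H_\Phi$, observe that its eigenvalues are $\sigma_j^2 - \frac{N}{M}$, bound the maximal deviation $\max_j\bigl|\sigma_j^2 - \frac{N}{M}\bigr|^{2m}$ by the full sum $\Tr\bigl(H_\Phi^{2m}\bigr)$ of nonnegative terms, and then invoke the identities $A_\Phi = \sigma_{\min}^2(\Phi^*)$, $B_\Phi = \sigma_{\max}^2(\Phi^*)$. The only cosmetic difference is that the paper phrases the bound in terms of the relative deviation $\delta_\Phi = \max_j\bigl|\frac{M}{N}\sigma_j^2 - 1\bigr|$ rather than the absolute one, which changes nothing in substance.
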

\begin{proof}
Let us denote $\delta_{\Phi} := \max_{1 \leq j \leq M} \left|\frac{M} {N}\sigma_j^2 - 1\right|$.
Observe that $H_{\Phi}$ is self-adjoint, and hence diagonizable, with eigenvalues given by $\sigma_j^2 - \frac{N}{M}$. Therefore, the eigenvalues of ${H_{\Phi}^m}$ are given by $(\sigma_j^2 - \frac{N}{M})^m$. Then
\begin{align*}
\delta_{\Phi}^{2m} & = \max_{1 \leq j \leq M} \left|\frac{M} {N}\sigma_j^2 - 1\right|^{2m} \leq \frac{M^{2m}}{N^{2m}}\sum_{j = 1}^M \left|\sigma_j^2 - \frac{N}{M}\right|^{2m} = \frac{M^{2m}}{N^{2m}} \Tr\left(H_{\Phi}^{2m}\right) \leq \delta^{2m}
\end{align*}
\noindent by the proposition assumption, that is, $\delta_{\Phi} \leq \delta < 1$. By the definition of $\delta_\Phi$, we have that ${\sigma_j^2 \in [(1 - \delta(\Phi)) \frac{N}{M}, (1 + \delta (\Phi))\frac{N}{M}]\subset [(1 - \delta) \frac{N}{M}, (1 + \delta)\frac{N}{M}]}$. The bounds on $A_\Phi$ and $B_\Phi$ then follows from
equations~\eqref{eq: lower frame bound def} and~\eqref{eq: upper frame bound def}.
\end{proof}

The following easy corollary relates optimal frame bounds to the frame potential.

\begin{corollary}\label{cr:fpnerf}
    Let $\Phi = \{\varphi_j\}_{j = 1}^N\subset \mathbb{C}^M$ be a unit-norm frame and assume there exists $\delta < 1$ such that
    \begin{align*}
        \frac{M^{2}}{N^{2}}\left(\mathrm{FP}(\Phi) - \frac{N^2}{M}\right) \leq \delta^2.
    \end{align*}
 \noindent Then we have that $\sigma_j^2 \in [(1 - \delta) \frac{N}{M}, (1 + \delta)\frac{N}{M}]$. In particular, 
    \begin{align*}
        (1 - \delta) \frac{N}{M} \leq A_\Phi \leq B_\Phi \leq (1 + \delta)\frac{N}{M}.
    \end{align*}
    \end{corollary}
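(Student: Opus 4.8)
The plan is to recognize this as the special case $m=1$ of Lemma~\ref{lem: trace}, so that the entire content of the corollary reduces to a single algebraic identity relating the frame potential to $\Tr(H_\Phi^2)$. Concretely, I would first show that for a unit-norm frame,
\begin{equation*}
\Tr\left(H_\Phi^2\right) = \mathrm{FP}(\Phi) - \frac{N^2}{M},
\end{equation*}
after which the hypothesis $\frac{M^2}{N^2}\left(\mathrm{FP}(\Phi) - \frac{N^2}{M}\right) \le \delta^2$ becomes literally the assumption $\frac{M^{2m}}{N^{2m}}\Tr(H_\Phi^{2m}) \le \delta^{2m}$ of Lemma~\ref{lem: trace} with $m=1$, and the conclusion on $\sigma_j^2$, $A_\Phi$, and $B_\Phi$ is inherited verbatim.

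To establish the identity, I would expand $H_\Phi = \Phi\Phi^* - \frac{N}{M}I_M$ inside the trace:
\begin{equation*}
\Tr\left(H_\Phi^2\right) = \Tr\left((\Phi\Phi^*)^2\right) - \frac{2N}{M}\Tr(\Phi\Phi^*) + \frac{N^2}{M^2}\Tr(I_M).
\end{equation*}
The three terms are then handled separately. For the first, cyclicity of the trace gives $\Tr\left((\Phi\Phi^*)^2\right) = \Tr(\Phi^*\Phi\,\Phi^*\Phi) = \|\Phi^*\Phi\|_{\mathrm{HS}}^2 = \mathrm{FP}(\Phi)$, matching the definition of the frame potential. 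For the second, $\Tr(\Phi\Phi^*) = \Tr(\Phi^*\Phi) = \sum_{j=1}^N \|\varphi_j\|_2^2 = N$, where the unit-norm hypothesis is exactly what makes each summand equal to $1$. The third is simply $\frac{N^2}{M^2}\cdot M = \frac{N^2}{M}$. Combining, the middle term contributes $-\frac{2N^2}{M}$ and the last $+\frac{N^2}{M}$, yielding the claimed $\mathrm{FP}(\Phi) - \frac{N^2}{M}$.

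With the identity in hand, I would simply invoke Lemma~\ref{lem: trace} with $m=1$ to conclude $\sigma_j^2 \in [(1-\delta)\frac{N}{M}, (1+\delta)\frac{N}{M}]$ and hence the stated bounds on $A_\Phi$ and $B_\Phi$ via \eqref{eq: lower frame bound def} and \eqref{eq: upper frame bound def}. There is no genuine obstacle here; the only point requiring care is the bookkeeping of the cross terms in the trace expansion and the explicit use of the unit-norm assumption to evaluate $\Tr(\Phi\Phi^*) = N$ (without it, one would instead pick up $\sum_j \|\varphi_j\|_2^2$ and the clean cancellation would fail). The result is therefore best presented as an immediate consequence of Lemma~\ref{lem: trace}, with the frame-potential identity as the one computational step worth spelling out.
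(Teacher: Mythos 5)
Your proposal is correct and follows essentially the same route as the paper: both reduce the corollary to Lemma~\ref{lem: trace} with $m=1$ via the identity $\Tr(H_\Phi^2) = \mathrm{FP}(\Phi) - \frac{N^2}{M}$, proved by expanding the square, using cyclicity of the trace, and invoking the unit-norm assumption to get $\Tr(\Phi^*\Phi)=N$. No gaps.
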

\begin{proof}
Using the cyclic property of the trace and the fact that $\Phi \Phi^*$ and $\Phi^*\Phi$ are self-adjoint, for the matrix $H_{\Phi}$ defined in equation~\eqref{eq: matrix H def} we have that
        \begin{align*}
            \Tr\left(H_{\Phi}^2\right) 
            &= \Tr\left((\Phi \Phi^*)^2\right) - 2 \frac{N}{M}\Tr(\Phi\Phi^*) + \frac{N^2}{M^2}\Tr (I_M) \\
            &= \Tr\left((\Phi^* \Phi)^2\right) - 2 \frac{N}{M}\Tr(\Phi^*\Phi) + \frac{N^2}{M}\\
            &= \|\Phi^*\Phi\|_{\mathrm{HS}}^2 - 2 \frac{N}{M}\Tr(\Phi^*\Phi) + \frac{N^2}{M}\\
            &= \mathrm{FP}(\Phi) - \frac{N^2}{M}.\numberthis\label{eq: FP via Tr(H)^2}
        \end{align*}
In the last equality here, we used that, since the vectors of the frame $\Phi$ are assumed to have unit norm, $\Tr(\Phi^*\Phi) = N$. The statement then is a direct implication of Lemma~\ref{lem: trace} with $m=1$.
    \end{proof}

The following provides a general lower bound for unit-norm frames via frame-potential. 
\begin{proposition}\label{pr:lowerboundsmallestsing}
Let $\Phi = \{\varphi_j\}_{j = 1}^N\subset\mathbb{C}^M$ be a unit-norm frame. Then
\begin{align*}
    A_\Phi \geq \frac{N}{M} - \frac{M - 1}{2M} - \frac{1}{2} \Tr(H_{\Phi}^2),
\end{align*}
where $H_{\Phi}$ is defined in equation~\eqref{eq: matrix H def}.
\end{proposition}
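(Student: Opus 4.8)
The plan is to work entirely at the level of the spectrum of the frame operator $\Phi\Phi^*$. Since $H_\Phi = \Phi\Phi^* - \frac{N}{M}I_M$ is self-adjoint, it is diagonalizable with eigenvalues $\sigma_j^2 - \frac{N}{M}$, where $\sigma_1^2,\dots,\sigma_M^2$ are the squared singular values of $\Phi$ (equivalently, the eigenvalues of $\Phi\Phi^*$), exactly as recorded in the proof of Lemma~\ref{lem: trace}. The unit-norm hypothesis enters through the trace identity $\Tr(\Phi\Phi^*) = \Tr(\Phi^*\Phi) = \sum_{j=1}^N \|\varphi_j\|_2^2 = N$, which gives $\sum_{j=1}^M \sigma_j^2 = N$ and hence $\Tr(H_\Phi) = \sum_{j=1}^M \bigl(\sigma_j^2 - \tfrac{N}{M}\bigr) = 0$. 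In particular the smallest eigenvalue lies below the mean, so writing $A_\Phi = \sigma_{\min}^2$ via equation~\eqref{eq: lower frame bound def} and setting $d := \frac{N}{M} - A_\Phi$, we have $d \ge 0$, and the goal reduces to the upper bound $d \le \frac{M-1}{2M} + \frac{1}{2}\Tr(H_\Phi^2)$.

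Next I would isolate the smallest eigenvalue and apply Cauchy--Schwarz to the remaining $M-1$ deviations. Assuming without loss of generality that $\sigma_1^2 = A_\Phi$, the vanishing-trace relation gives $\sum_{j=2}^M \bigl(\sigma_j^2 - \tfrac{N}{M}\bigr) = -\bigl(\sigma_1^2 - \tfrac{N}{M}\bigr) = d$. Cauchy--Schwarz over these $M-1$ terms yields
\begin{align*}
d^2 = \Bigl(\sum_{j=2}^M \bigl(\sigma_j^2 - \tfrac{N}{M}\bigr)\Bigr)^2 \le (M-1)\sum_{j=2}^M \bigl(\sigma_j^2 - \tfrac{N}{M}\bigr)^2 = (M-1)\bigl(\Tr(H_\Phi^2) - d^2\bigr),
\end{align*}
where in the last step I use that $\Tr(H_\Phi^2) = \sum_{j=1}^M (\sigma_j^2 - \tfrac{N}{M})^2$ and that the $j=1$ term equals $d^2$. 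Rearranging gives $M d^2 \le (M-1)\Tr(H_\Phi^2)$, i.e. $d \le \sqrt{\tfrac{M-1}{M}\Tr(H_\Phi^2)}$.

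Finally I would linearize this square-root bound by the arithmetic--geometric mean inequality $\sqrt{ab}\le \tfrac{1}{2}(a+b)$ with $a = \tfrac{M-1}{M}$ and $b = \Tr(H_\Phi^2)$, obtaining $d \le \frac{M-1}{2M} + \frac{1}{2}\Tr(H_\Phi^2)$, which is exactly the claim after substituting $d = \frac{N}{M} - A_\Phi$. The argument is short and the two estimates are both routine; the only genuinely non-obvious step is the final AM--GM move. The Cauchy--Schwarz step already produces a strictly sharper bound $A_\Phi \ge \frac{N}{M} - \sqrt{\tfrac{M-1}{M}\Tr(H_\Phi^2)}$, and the point of deliberately weakening it to the stated affine form is presumably that the quantity $\Tr(H_\Phi^2)$ (equal to $\mathrm{FP}(\Phi) - \tfrac{N^2}{M}$ by equation~\eqref{eq: FP via Tr(H)^2}) then enters linearly, which is the shape needed for the frame-potential and NERF estimates of Section~\ref{sec: deterministic NERF}.
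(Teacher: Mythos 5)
Your proof is correct, but it takes a genuinely different route from the paper. The paper's argument adjoins the test vector to the frame: for $x \in \mathbb{S}^{M-1}$ it forms the augmented unit-norm system $\Psi = \Phi \cup \{x\}$, applies the zero-th order Welch bound (Proposition~\ref{pr:zerowelch}) to get $\mathrm{FP}(\Phi) + 2\sum_{j=1}^N |\langle x,\varphi_j\rangle|^2 + 1 = \mathrm{FP}(\Psi) \geq \frac{(N+1)^2}{M}$, rearranges, substitutes $\mathrm{FP}(\Phi) = \frac{N^2}{M} + \Tr(H_\Phi^2)$ from~\eqref{eq: FP via Tr(H)^2}, and minimizes over $x$; the affine form falls out directly, the constant $\frac{N}{M}-\frac{M-1}{2M}$ being exactly $\frac{(N+1)^2-N^2}{2M}-\frac{1}{2}$. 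Your route instead works entirely with the spectrum of $H_\Phi$: zero trace (from the unit-norm hypothesis), Cauchy--Schwarz over the $M-1$ eigenvalue deviations other than the smallest, then AM--GM to linearize the resulting square-root bound. Each approach buys something. Yours is self-contained (no Welch bound needed), uses only the weaker hypothesis $\sum_{j=1}^N \|\varphi_j\|_2^2 = N$ rather than each vector being unit norm, and produces the strictly sharper intermediate estimate $A_\Phi \geq \frac{N}{M} - \sqrt{\frac{M-1}{M}\Tr(H_\Phi^2)}$, which the paper never records. The paper's proof, by contrast, is a three-line consequence of machinery already in place (Proposition~\ref{pr:zerowelch} and~\eqref{eq: FP via Tr(H)^2}) and keeps the statement visibly tied to the frame-potential bookkeeping that drives the erasure estimates; note also that the Welch bound is itself a trace Cauchy--Schwarz inequality, so the two arguments are cousins at bottom, just organized around different objects. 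Your closing speculation is essentially right: the affine shape in $\Tr(H_\Phi^2)$ is what the paper's method produces natively, and it is the form consumed downstream, e.g.\ in Corollary~\ref{cor: Gabor NERF}.
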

\begin{proof}
Let $x \in \mathbb{S}^{M - 1}$ and consider  $\Psi := \Phi \cup \{x \}$. Using Proposition~\ref{pr:zerowelch}, we obtain that
\begin{align*}
    \mathrm{FP}(\Phi) + 2 \sum_{j = 1}^N |\langle x , \varphi_j\rangle|^2 + 1 =  \mathrm{FP}(\Psi) \geq \frac{(N + 1)^2}{M}.
\end{align*}
Rearranging and dividing by two yields
\begin{align*}
    \sum_{j = 1}^N |\langle x , \varphi_j\rangle|^2 \geq  \frac{N}{M}  - \frac{M - 1}{2M} + \frac{1}{2}\left(\frac{N^2}{M} - \mathrm{FP}\left(\Phi\right)\right).
\end{align*}
As, by equation~\eqref{eq: FP via Tr(H)^2}, $\mathrm{FP}(\Phi) = \frac{N^2}{M} + \Tr(H_{\Phi}^2)$ we obtain 
\begin{align*}
\sum_{j = 1}^N |\langle x , \varphi_j\rangle|^2 \geq  \frac{N}{M} - \frac{M - 1}{2M} - \frac{1}{2} \Tr(H_{\Phi}^2).
\end{align*}
By taking the minimum over all $x \in S^{M-1}$ it follows

\begin{align*}
    A_\Phi = \min_{x \in S^{M-1}}\|\Phi^*x\|_2^2 \geq \frac{N}{M} - \frac{M - 1}{2M} - \frac{1}{2} \Tr(H_{\Phi}^2),
\end{align*}
and the proof is complete.
\end{proof}

\section{Frame bounds for Gabor subframes} \label{sing_val_sec}

Before we discuss the dependence of the optimal frame bounds on the structure and cardinality of $\Lambda$, let us consider a simple case when set $\Lambda$ has a very particular structure. Namely, we start with the following observation.

\begin{proposition}\label{prop_sing_val_Gabor_regular}
Let $(g, \Lambda)$ be a Gabor system with $\Lambda = F\times \mathbb{Z}_M$  for some $F\subset \mathbb{Z}_M$, $F\ne \emptyset$, and window $g\in \mathbb{C}^M$. Then $(g, \Lambda)$ is a frame if and only if $\min_{m\in \mathbb{Z}_M}\Vert g_{F_m}\Vert_2\ne 0$, where $g_{F_m}$ is the restriction of the vector $g$ to the set of coefficients $F_m = \{m - k\}_{k\in F}\subset \mathbb{Z}_M$. In this case, the optimal lower and upper frame bounds for $(g, \Lambda)$ are given by $$A_{(g, \Lambda)} = M\min_{m\in \mathbb{Z}_M}\Vert g_{F_m}\Vert_2^2,$$ $$B_{(g, \Lambda)} = M\max_{m\in \mathbb{Z}_M} \Vert g_{F_m}\Vert_2^2.$$
\end{proposition}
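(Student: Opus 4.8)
The plan is to compute the frame operator $S := \Phi_{(g,\Lambda)}\Phi_{(g,\Lambda)}^* = \sum_{\lambda \in \Lambda} \pi(\lambda)g\,(\pi(\lambda)g)^*$ explicitly and read off its eigenvalues. The key observation is that, because $\Lambda = F \times \mathbb{Z}_M$ contains \emph{all} frequency shifts for each $k \in F$, summing the rank-one terms over $\ell \in \mathbb{Z}_M$ collapses by orthogonality of characters and renders $S$ diagonal. Once $S$ is diagonal, its eigenvalues are its diagonal entries, and the optimal frame bounds $A_{(g,\Lambda)} = \lambda_{\min}(S)$ and $B_{(g,\Lambda)} = \lambda_{\max}(S)$ (via equations~\eqref{eq: lower frame bound def} and~\eqref{eq: upper frame bound def}) are simply the smallest and largest of these entries.

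Concretely, I would first write each frame vector as $\pi(k,\ell)g(j) = e^{2\pi i \ell j/M} g(j-k)$, so that the $(j,j')$ entry of the rank-one term $\pi(k,\ell)g\,(\pi(k,\ell)g)^*$ equals $e^{2\pi i \ell (j-j')/M}\, g(j-k)\overline{g(j'-k)}$. Summing over $\ell \in \mathbb{Z}_M$ and using $\sum_{\ell \in \mathbb{Z}_M} e^{2\pi i \ell(j-j')/M} = M\,\delta_{j,j'}$ annihilates every off-diagonal entry and leaves, for each fixed $k$, the diagonal matrix with entries $M|g(j-k)|^2$. Summing over $k \in F$ then gives
\begin{equation*}
S(j,j') = \delta_{j,j'}\, M \sum_{k \in F} |g(j-k)|^2 = \delta_{j,j'}\, M\,\Vert g_{F_j}\Vert_2^2,
\end{equation*}
since $\{\,j-k : k \in F\,\} = F_j$ by the definition of $F_m$.

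Finally, since $S$ is diagonal its eigenvalues are exactly its diagonal entries $\{\,M\Vert g_{F_m}\Vert_2^2 : m \in \mathbb{Z}_M\,\}$. The system $(g,\Lambda)$ spans $\mathbb{C}^M$ --- equivalently, $S$ is invertible --- if and only if every diagonal entry is strictly positive, i.e.\ $\min_{m}\Vert g_{F_m}\Vert_2 \neq 0$, which gives the frame characterization. When this holds, selecting the smallest and largest diagonal entries yields $A_{(g,\Lambda)} = M\min_m \Vert g_{F_m}\Vert_2^2$ and $B_{(g,\Lambda)} = M\max_m \Vert g_{F_m}\Vert_2^2$.

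The computation is elementary and I do not anticipate a genuine obstacle; the only points requiring care are the modular index arithmetic (all shifts are taken mod $M$, as fixed in the notation, so $g(j-k) = g(M+j-k)$) and correctly matching the shifted index set $\{\,j-k : k\in F\,\}$ with the definition of $F_j$. I would also remark that the companion case $\Lambda = \mathbb{Z}_M \times F$, while not part of this statement, follows immediately from the Fourier duality of Proposition~\ref{prop: Fourier duality}.
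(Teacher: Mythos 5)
Your proposal is correct and follows essentially the same route as the paper's own proof: both compute the frame operator $\Phi_\Lambda\Phi_\Lambda^*$ entrywise, use orthogonality of characters $\sum_{\ell\in\mathbb{Z}_M} e^{2\pi i \ell(j-j')/M} = M\delta_{j,j'}$ to show it is diagonal with entries $M\Vert g_{F_j}\Vert_2^2$, and then read off the frame characterization and optimal bounds from the diagonal. No gaps; the index bookkeeping you flag is handled exactly as in the paper.
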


\begin{proof}
Let us denote by $\Phi_\Lambda\in \mathbb{C}^{M\times |F| M}$ and  the synthesis matrix of the Gabor system $(g, \Lambda)$, and consider its frame operator $\Phi_\Lambda\Phi_\Lambda^*$. For any $m_1, m_2\in \mathbb{Z}_M$,
\begin{equation*}
\begin{split}
\Phi_\Lambda\Phi_\Lambda^*(m_1, m_2) & = \sum_{\lambda\in \Lambda}(\pi(\lambda)g)(m_1)\overline{(\pi(\lambda)g)(m_2)} \\
& = \sum_{k\in F} \sum_{\ell\in \mathbb{Z}_M} e^{2\pi i \ell (m_1 - m_2)/M}g(m_1 - k)\overline{g(m_2 - k)}\\
& = \sum_{k\in F} g(m_1 - k)\overline{g(m_2 - k)} \sum_{\ell\in \mathbb{Z}_M} e^{2\pi i \ell (m_1 - m_2)/M}.
\end{split}
\end{equation*}
\noindent Then, since $\sum_{\ell\in \mathbb{Z}_M} e^{2\pi i \ell (m_1 - m_2)/M} = 0$ for $m_1\ne m_2$, and ${\sum_{\ell\in \mathbb{Z}_M} e^{2\pi i \ell (m_1 - m_2)/M} = M}$ for ${m_1 = m_2}$, we obtain
\begin{equation*}
\Phi_\Lambda\Phi_\Lambda^*(m_1, m_2) = \left\lbrace \begin{array}{ll}
0, & m_1\ne m_2\\
M \sum_{k\in F} |g(m_1 - k)|^2, & m_1 = m_2. 
\end{array}\right.
\end{equation*}
That is, $\Phi_\Lambda\Phi_\Lambda^* = \diag\{M \sum_{k\in F} |g(m - k)|^2\}_{m\in \mathbb{Z}_M}$ is a diagonal matrix and, thus, the set of the singular values of the analysis matrix $\Phi_\Lambda^*$ is given by $\{\sigma_m(\Phi_\Lambda^*)\}_{m\in \mathbb{Z}_M} = \{\sqrt{M} \Vert g_{F_m}\Vert_2\}_{m\in \mathbb{Z}_M}$. Here, $F_m = \{m - k\}_{k\in F}\subset \mathbb{Z}_M$ and $g_S$ denotes the restriction of the vector $g$ to a set of coefficients~$S\subset \mathbb{Z}_M$.

In particular, $(g, \Lambda)$ is a frame if and only if all the diagonal entries of $\Phi_\Lambda\Phi_\Lambda^*$ are nonzero, that is, if and only if $\min_{m\in \mathbb{Z}_M}\Vert g_{F_m}\Vert_2\ne 0$. Moreover, we have
\begin{equation*}
\begin{split}
\sigma_{\min}(\Phi_\Lambda^*) & = \min_{m\in \mathbb{Z}_M}\sigma_m(\Phi_\Lambda^*) = \sqrt{M}\min_{m\in \mathbb{Z}_M}\Vert g_{F_m}\Vert_2;\\
\sigma_{\max}(\Phi_\Lambda^*) & = \max_{m\in \mathbb{Z}_M}\sigma_m(\Phi_\Lambda^*) = \sqrt{M}\max_{m\in \mathbb{Z}_M}\Vert g_{F_m}\Vert_2.
\end{split}
\end{equation*}
That is, $M\min_{m\in \mathbb{Z}_M}\{\Vert g_{F_m}\Vert_2^2\}$ and $M\max_{m\in \mathbb{Z}_M}\{\Vert g_{F_m}\Vert_2^2\}$ are the optimal lower and upper frame bounds for $(g, \Lambda)$, respectively.
\end{proof}

\begin{remark}\label{remark: Fourier duality} We note that, using Proposition~\ref{prop: Fourier duality}, we obtain an analogous result in the the case when the frame set $\Lambda$ is of the form $\Lambda=\mathbb{Z}_M\times F$, for some $F\subset \mathbb{Z}_M$. 
\end{remark}

\medskip

Let us now consider several particular classes of random Gabor windows and use Proposition~\ref{prop_sing_val_Gabor_regular} to estimate the frame bounds for the respective Gabor frames with the frame set of the form ${\Lambda = F\times \mathbb{Z}_M}$.

\begin{example}\label{example_structured}\mbox{}
\begin{enumerate}
\item[(i)] {\it Steinhaus window.} We first consider the case when the window $g$ is chosen so that ${g(m) = \frac{1}{\sqrt{M}}e^{2\pi i y_m}}$, $m\in \mathbb{Z}_M$, and $y_m$ are independent uniformly distributed on~$[0,1)$. Then, for each $m\in \mathbb{Z}_M$, $M \sum_{k\in F} |g(m - k)|^2 = |F|$, and thus $\Phi_\Lambda\Phi_\Lambda^* = |F|I_M$. That is, $A_{(g, \Lambda)} = B_{(g, \Lambda)} = \vert F\vert$, and $(g, \Lambda)$ is a tight frame.

\item[(ii)] {\it Gaussian window.} For a Gaussian window $g\sim \mathcal{C}\mathcal{N}\left( 0, \frac{1}{M}I_M\right)$, we have 
\begin{equation*}
\sigma_m^2(\Phi_\Lambda^*) = M \sum_{k\in F} |g(m - k)|^2 = \sum_{k\in F} \left(\frac{1}{2}2M r(m - k)^2 + \frac{1}{2}2M s(m - k)^2\right),
\end{equation*}
\noindent where $r(m-k) = \mathrm{Re}(g(m - k))$ denotes the real part of $g(m - k)$, and $s(m-k) = \mathrm{Im}(g(m - k))$ denotes its imaginary part. Since, for $k\in F$, $\sqrt{2M}r(m-k),~ \sqrt{2M}s(m-k) \sim \text{i.i.d. } \mathcal{N}(0,1)$ are independent standard Gaussian random variables, we can apply Lemma \ref{chi_square} to obtain that, for any $t>0$,
\begin{equation*}
\begin{split}
& \mathbb{P}\left\lbrace \sigma_m^2(\Phi_\Lambda^*) \ge |F| + \sqrt{2|F|t} + t \right\rbrace \le e^{-t};\\
& \mathbb{P}\left\lbrace \sigma_m^2(\Phi_\Lambda^*) \le |F| - \sqrt{2|F|t} \right\rbrace \le e^{-t}.
\end{split}
\end{equation*}
\noindent Then, setting $t =2 |F|$ in the first equation and $t = \frac{1}{8}|F|$ in the second one, we obtain
\begin{equation*}
\begin{split}
& \mathbb{P}\left\lbrace \sigma_m^2(\Phi_\Lambda^*) \ge 5|F| \right\rbrace \le e^{-2|F|};\\
& \mathbb{P}\left\lbrace \sigma_m^2(\Phi_\Lambda^*) \le \frac{1}{2}|F| \right\rbrace \le e^{-\frac{|F|}{8}}.
\end{split}
\end{equation*}
Suppose now that $|F|\ge C\log M$, for some sufficiently large constant $C>0$. Then, combining the probability estimates obtained above and taking the union bound over all $m\in \mathbb{Z}_M$, we obtain that, with high probability,
\begin{equation*}
\frac{1}{2}|F|< \sigma_m^2(\Phi_\Lambda^*)< 5|F|,
\end{equation*}
\noindent for all $m\in \mathbb{Z}_M$. In particular, for the frame bounds of $(g, \Lambda)$ we have 
\begin{equation}\label{example_gaussian_wind}
\frac{1}{2}|F|< A_{(g, \Lambda)} \le B_{(g, \Lambda)}< 5|F|.
\end{equation}

\item[(iii)] {\it Window, uniformly distributed on $\mathbb{S}^{M-1}$.} It is a well-known fact that a window $g$, uniformly distributed on the unit sphere $\mathbb{S}^{M-1}$, can be written in the form $g = h/\Vert h\Vert_2$, where $h\sim \mathcal{C}\mathcal{N}\left(0,\frac{1}{M} I_M\right)$ \cite{marsaglia1972choosing}. Moreover, Lemma \ref{lemma_norm_gaussian} shows that, for some $C>0$, $\frac{1}{2}\le \Vert h\Vert_2\le 2$ with probability at least $1 - e^{-CM}$. Thus, with the same probability, 
\begin{equation*}
\frac{1}{4} M \sum_{k\in F} |h(m - k)|^2 \le M \sum_{k\in F} |g(m - k)|^2 \le  4M \sum_{k\in F} |h(m - k)|^2.
\end{equation*}
\noindent Combining this with \eqref{example_gaussian_wind}, we obtain that with high probability
\begin{equation*}
\frac{1}{8}|F|< A_{(g, \Lambda)} \le B_{(g, \Lambda)}< 20|F|.
\end{equation*}
\end{enumerate}
\end{example}

\medskip
The examples above show that, in the case when $\Lambda$ has a regular structure and window $g$ is random, the Gabor frame $(g,\Lambda)$ has frame bounds that are quite close to each other, and, thus, is well-conditioned.  

\begin{remark}
    Let $\mathcal{C} \subset \mathcal{P}(\Z_M \times \Z_M)$ be a collection of subsets of $\Z_M \times \Z_M$. Suppose that we know (or have a good estimate of) the optimal frame bounds for all the Gabor frames $(g,\Lambda')$ with $\Lambda' \in \mathcal{C}$. This can be used to derive estimates the lower and upper frame bounds for $(g,\Lambda)$, such that there exist $\Lambda', \Lambda''\in \mathcal{C}$ with $\Lambda'\subset \Lambda \subset \Lambda''$. Indeed, if $\Lambda' \subset \Lambda$, then we have
\begin{align*}
    \min_{x \in S^{M-1}} \sum_{\lambda \in \Lambda'} \left\vert \langle x , \pi(\lambda)g \rangle\right\vert^2 \leq \min_{x \in S^{M-1}}\sum_{\lambda \in \Lambda} \left\vert \langle x , \pi(\lambda)g\rangle \right\vert^2.
\end{align*}
Thus the smallest singular value of $(g,\Lambda)$ is at least as large as the smallest singular value of $(g,\Lambda')$ (which we assumed to be known). By similar reasoning, if $\Lambda \subset \Lambda''$ then the largest singular value of $(g,\Lambda)$ is at most as large as $(g,\Lambda'')$. 

By setting $\mathcal{C} = \{F \times \Z_M \colon \varnothing \subsetneq F \subset \Z_M\} \cup \{\Z_M \times F\colon \varnothing \subsetneq F \subset \Z_M\}$, this argument can be used to extend the results of Proposition~\ref{prop_sing_val_Gabor_regular} and Remark~\ref{remark: Fourier duality} to a larger class of Gabor frames $(g,\Lambda)$ with $\Lambda'\subset \Lambda \subset \Lambda''$ for some $\Lambda', \Lambda''\in \mathcal{C}$.
\end{remark}

\subsection{The case of a general $\Lambda$}\label{sec_proof_of_main_results}

In this section, we consider the case when $\Lambda$ is an arbitrary subset of $\mathbb{Z}_M\times\mathbb{Z}_M$ and derive frame bounds for Gabor frames with Steinhaus and Gaussian windows. As we mentioned before, the frame bounds of a Gabor frame depend on the structure of the frame set $\Lambda$. The result below evaluates the frame bounds independently of its structure, depending only on the cardinality of $\Lambda$. Thus, one should consider this result as the worst case bound (compare, for instance, to the bounds established in Example~\ref{example_structured} for sets $\Lambda$ with specific structure). 

We start our consideration by showing the following technical lemma, which follows the idea of \cite[Lemma~3.4]{pfander2010sparsity}.

\begin{lemma}\label{lemma_trace_formula}
Consider a Gabor system $(g, \Lambda)$ with $\Lambda\subset \mathbb{Z}_M\times \mathbb{Z}_M$ and a random window $g$. Then, for any $m\in \mathbb{N}$ and $\delta >0$,
\begin{equation*}
\mathbb{P}\left\lbrace\frac{|\Lambda|}{M}(1-\delta) \le A_{(g, \Lambda)} \le B_{(g, \Lambda)}\le \frac{|\Lambda|}{M}(1+\delta)\right\rbrace \geq 1 - \frac{M^{2m}}{|\Lambda|^{2m}}\delta^{-2m}\mathbb{E}(\Tr H^{2m}),
\end{equation*}
\noindent where $H = \Phi_\Lambda \Phi_\Lambda ^*- \frac{|\Lambda|}{M}I_M$ is as in~\eqref{eq: matrix H def}. Furthermore, if $g$ is a Steinhaus window, for any $m\in \mathbb{N}$,
\begin{equation*}
\mathbb{E}\left(\Tr H^m \right) = \sum_{\substack{j_1, j_2, \dots, j_m\in \mathbb{Z}_M,\\j_1\ne j_2\ne\dots\ne j_m\ne j_{1}}}\sum_{(k_1, \ell_1)\in \Lambda}  \dots \sum_{(k_m, \ell_m)\in \Lambda} e^{\frac{2\pi i}{M} \sum_{t = 1}^m \ell_t (j_t - j_{t+1})} E_{\substack{j_1\dots j_m\\k_1\dots k_m}},
\end{equation*}
\noindent where $E_{\substack{j_1\dots j_m\\k_1\dots k_m}} =\frac{1}{M^{m}}$, if there exists a bijection $\alpha:\{1,\dots,m\}\to\{1,\dots,m\}$, such that ${j_t - k_t = j_{\alpha(t)} - k_{\alpha(t) - 1}}$, for all $t\in\{1,\dots,m\}$; and $E_{\substack{j_1\dots j_m\\k_1\dots k_m}} = 0$, otherwise.
\end{lemma}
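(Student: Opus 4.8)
The plan is to handle the two claims separately: the probability estimate is a one-line consequence of Lemma~\ref{lem: trace} and Markov's inequality, whereas the trace identity is obtained by expanding $\Tr(H^m)$ entrywise and evaluating the window expectation via independence of the phases. First I would prove the probability bound. Lemma~\ref{lem: trace}, applied with $N = |\Lambda|$, is a pointwise (deterministic) statement about each realization of $g$: whenever $\frac{M^{2m}}{|\Lambda|^{2m}}\Tr(H^{2m}) \le \delta^{2m}$, the frame bounds of $(g,\Lambda)$ lie in $[\frac{|\Lambda|}{M}(1-\delta), \frac{|\Lambda|}{M}(1+\delta)]$. Consequently the event $\{\frac{M^{2m}}{|\Lambda|^{2m}}\Tr(H^{2m}) \le \delta^{2m}\}$ is contained in the event on the left-hand side, so it suffices to bound the probability of its complement. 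Since $H$ is self-adjoint, $H^{2m}$ is positive semidefinite and $\Tr(H^{2m}) \ge 0$, so Markov's inequality gives $\mathbb{P}(\frac{M^{2m}}{|\Lambda|^{2m}}\Tr(H^{2m}) > \delta^{2m}) \le \delta^{-2m}\frac{M^{2m}}{|\Lambda|^{2m}}\mathbb{E}(\Tr H^{2m})$, and passing to the complement yields the claim for arbitrary random windows.

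For the trace identity I would first record the entries of the frame operator. Writing $\pi(k,\ell)g(j) = e^{2\pi i \ell j/M} g(j-k)$, we obtain
$$(\Phi_\Lambda\Phi_\Lambda^*)(j,j') = \sum_{(k,\ell)\in\Lambda} e^{2\pi i \ell(j-j')/M}\, g(j-k)\,\overline{g(j'-k)}.$$
For a Steinhaus window $|g(n)|^2 = 1/M$ for every $n$, so the diagonal entries equal $|\Lambda|/M$ and the diagonal of $H$ vanishes identically. Hence in $\Tr(H^m) = \sum_{j_1,\dots,j_m} H(j_1,j_2)\cdots H(j_m,j_1)$ every term with two equal consecutive indices (cyclically, including the pair $j_m,j_1$) drops out, which is precisely the constraint $j_1 \ne j_2 \ne \cdots \ne j_m \ne j_1$ in the statement. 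Substituting the off-diagonal entries and pulling the deterministic modulation phases out of the expectation leaves, for each fixed index tuple, the factor $e^{\frac{2\pi i}{M}\sum_t \ell_t(j_t - j_{t+1})}$ (with $j_{m+1} := j_1$) multiplied by $\mathbb{E}\prod_{t=1}^m g(j_t - k_t)\overline{g(j_{t+1}-k_t)}$.

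It then remains to evaluate the window expectation $E_{\substack{j_1\dots j_m\\k_1\dots k_m}}$. For the Steinhaus window this product equals $M^{-m}\exp\bigl(2\pi i[\sum_t y_{j_t-k_t} - \sum_t y_{j_{t+1}-k_t}]\bigr)$, and since the $y_n$ are independent and uniform, $\mathbb{E}\, e^{2\pi i c y_n} = 1$ when the net integer coefficient $c$ of $y_n$ vanishes and $0$ otherwise. Thus the expectation is nonzero, and then equal to $M^{-m}$, exactly when every phase $y_n$ cancels, i.e.\ when the multiset $\{j_t - k_t\}_{t=1}^m$ of ``$+$'' indices coincides with the multiset $\{j_{t+1}-k_t\}_{t=1}^m = \{j_t - k_{t-1}\}_{t=1}^m$ of ``$-$'' indices (using the cyclic convention $k_0 := k_m$). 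Equality of these two families as multisets is precisely the existence of a bijection $\alpha$ with $j_t - k_t = j_{\alpha(t)} - k_{\alpha(t)-1}$ for all $t$, which gives the stated value of $E$ and completes the formula.

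I expect the main obstacle to be the careful combinatorial bookkeeping in this last step: correctly matching the $+1$ and $-1$ occurrences of each variable $y_n$, handling the cyclic index conventions $j_{m+1}=j_1$ and $k_0 = k_m$, and verifying that the phase-cancellation condition is genuinely \emph{equivalent} to the multiset (bijection) condition rather than merely implied by it. The remaining ingredients — the entry computation and the Markov step — are routine.
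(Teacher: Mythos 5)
Your proof is correct and follows essentially the same route as the paper: the paper's probability argument is exactly your Markov-plus-trace step (it inlines the inequality $\Vert H\Vert_2^{2m}\le \Tr (H^{2m})$ for self-adjoint $H$ rather than citing Lemma~\ref{lem: trace}, but the content is identical), and its trace identity comes from the same entrywise expansion using the zero diagonal of $H$ for a Steinhaus window, with the expectation evaluated by factoring over independent coordinates — equivalent to your phase-cancellation bookkeeping, including the conventions $j_{m+1}=j_1$, $k_0=k_m$ and the multiset/bijection equivalence. One pedantic caveat: Lemma~\ref{lem: trace} is stated under the hypothesis $\delta<1$ while the present statement allows any $\delta>0$, but since the containment argument inside its proof never actually uses $\delta<1$, your pointwise invocation remains valid for all $\delta$.
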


\begin{proof}
First, we note that 
\begin{equation*}
\mathbb{P}\left\lbrace\frac{|\Lambda|}{M}(1-\delta) \le A_{(g,\Lambda)} \le B_{(g,\Lambda)}\le \frac{|\Lambda|}{M}(1+\delta)\right\rbrace  = \mathbb{P}\left\lbrace \Vert H\Vert_2 \le \frac{|\Lambda|}{M}\delta\right\rbrace.
\end{equation*}

Using Markov’s inequality, the fact that the Frobenius norm majorizes the operator norm, and the fact that $H$ is self-adjoint, for any $m\in \mathbb{N}$ we have
\begin{align*}
\mathbb{P} \left\lbrace\Vert H\Vert_2> \frac{|\Lambda|}{M}\delta\right\rbrace & = \mathbb{P} \left\lbrace\Vert H\Vert _2^{2m}> \frac{|\Lambda|^{2m}}{M^{2m}}\delta^{2m}\right\rbrace \le \frac{M^{2m}}{|\Lambda|^{2m}}\delta^{-2m}\mathbb{E}(\Vert H\Vert _2^{2m})  \\
& = \frac{M^{2m}}{|\Lambda|^{2m}}\delta^{-2m}\mathbb{E}(\Vert H^m\Vert _2^{2}) \le \frac{M^{2m}}{|\Lambda|^{2m}}\delta^{-2m}\mathbb{E}(\Vert H^m\Vert _F^{2}) \\
& = \frac{M^{2m}}{|\Lambda|^{2m}}\delta^{-2m}\mathbb{E}(\Tr H^{2m}),
\end{align*}
\noindent which concludes the proof of the first part of the lemma. 

For the second part, we need to estimate the trace expectation $\mathbb{E}(\Tr H^{2m})$. For any $j_1, j_2\in \mathbb{Z}_M$, 
\begin{equation*}
\Phi_\Lambda \Phi_\Lambda^* (j_1, j_2) = \sum_{(k, \ell)\in \Lambda}e^{2\pi i \ell (j_1 - j_2)/M}g(j_1 - k)\overline{g(j_2 - k)}.
\end{equation*}
Thus, since we assume $g$ to be Steinhaus, $|g(j)| = \frac{1}{\sqrt{M}}$, for all $j\in \mathbb{Z}_M$. For $H$ we have
\begin{equation*}
H (j_1, j_2) = \left\lbrace \begin{array}{ll}
\sum_{(k, \ell)\in \Lambda}e^{2\pi i \ell (j_1 - j_2)/M}g(j_1 - k)\overline{g(j_2 - k)}, & j_1\ne j_2;\\
0, & j_1 = j_2.
\end{array}\right.
\end{equation*}
Then, for $j_1,\dots, j_{m+1}\in \mathbb{Z}_M$, we recursively obtain
\begin{equation*}
\Scale[0.93]{
\begin{split}
& H^2 (j_1, j_3) = \sum_{j_2\in \mathbb{Z}_M} H(j_1,j_2)H(j_2, j_3) \\
& = \sum_{\substack{j_2\in \mathbb{Z}_M,\\j_2\ne j_1,j_3}}\sum_{(k_1, \ell_1)\in \Lambda} \sum_{(k_2, \ell_2)\in \Lambda} e^{\frac{2\pi i}{M} (\ell_1 (j_1 - j_2) + \ell_2 (j_2 - j_3))} g(j_1 - k_1)\overline{g(j_2 - k_1)}g(j_2 - k_2)\overline{g(j_3 - k_2)};\\
& H^3 (j_1, j_4) = \sum_{j_3\in \mathbb{Z}_M} H^2(j_1,j_3)H(j_3, j_4) \\
& = \sum_{\substack{j_3\in \mathbb{Z}_M,\\j_3\ne j_4}}\sum_{\substack{j_2\in \mathbb{Z}_M,\\j_2\ne j_1,j_3}}\sum_{(k_1, \ell_1)\in \Lambda} \sum_{(k_2, \ell_2)\in \Lambda} \sum_{(k_3, \ell_3)\in \Lambda} e^{\frac{2\pi i}{M} \sum_{t = 1}^3 \ell_t (j_t - j_{t+1})} \prod_{t = 1}^3 g(j_t - k_t)\overline{g(j_{t+1} - k_t)};
\end{split}}
\end{equation*}
and, in general,
\begin{equation*}
\Scale[0.93]{
\begin{split}
& H^m (j_1, j_{m+1}) = \sum_{j_m\in \mathbb{Z}_M} H^{m-1}(j_1,j_m)H(j_m, j_{m+1}) \\
& = \sum_{\substack{j_m\in \mathbb{Z}_M,\\j_m\ne j_{m+1}}}\dots\sum_{\substack{j_3\in \mathbb{Z}_M,\\j_3\ne j_4}}\sum_{\substack{j_2\in \mathbb{Z}_M,\\j_2\ne j_1,j_3}}\sum_{(k_1, \ell_1)\in \Lambda}  \dots \sum_{(k_m, \ell_m)\in \Lambda} e^{\frac{2\pi i}{M} \sum_{t = 1}^m \ell_t (j_t - j_{t+1})} \prod_{t = 1}^m g(j_t - k_t)\overline{g(j_{t+1} - k_t)}.
\end{split}}
\end{equation*}
Thus, for the trace of the matrix $H^m$, we have
\begin{equation*}
\Scale[0.93]{
\begin{split}
& \Tr(H^m) = \sum_{\substack{j_1, j_2, \dots, j_m\in \mathbb{Z}_M,\\j_1\ne j_2\ne\dots\ne j_m\ne j_{1}}}\sum_{(k_1, \ell_1)\in \Lambda}  \dots \sum_{(k_m, \ell_m)\in \Lambda} e^{\frac{2\pi i}{M} \sum_{t = 1}^m \ell_t (j_t - j_{t+1})} \prod_{t = 1}^m g(j_t - k_t)\overline{g(j_{t+1} - k_t)}, \text{ and}\\
& \mathbb{E}\left(\Tr(H^m)\right) = \sum_{\substack{j_1, j_2, \dots, j_m\in \mathbb{Z}_M,\\j_1\ne j_2\ne\dots\ne j_m\ne j_{1}}}\sum_{(k_1, \ell_1)\in \Lambda}  \dots \sum_{(k_m, \ell_m)\in \Lambda} e^{\frac{2\pi i}{M} \sum_{t = 1}^m \ell_t (j_t - j_{t+1})} E_{\substack{j_1\dots j_m\\k_1\dots k_m}},
\end{split}}
\end{equation*}
\noindent where $ E_{\substack{j_1\dots j_m\\k_1\dots k_m}} = \mathbb{E}\left(\prod_{t = 1}^m g(j_t - k_t)\overline{g(j_{t+1} - k_t)}\right)$.

Let us compute $E_{\substack{j_1\dots j_m\\k_1\dots k_m}}$ now. Since $g(j)$, $j\in \mathbb{Z}_M$, are independent, the expectation can be factored into a product of the form
\begin{equation*}
\mathbb{E}\left(\prod_{t = 1}^m g(j_t - k_t)\overline{g(j_{t+1} - k_t)}\right) = \prod_{j\in \mathbb{Z}_M}\mathbb{E}\left( g(j)^{\mu_j}\overline{g(j)}^{\nu_j} \right),
\end{equation*}
\noindent for some $\mu_j, \nu_j\in \mathbb{N}\cup \{0\}$. Moreover, since $\sqrt{M}g(j)$ is uniformly distributed on the unit torus $\{z\in \mathbb{C}: \Vert z\Vert _2 = 1\}$ and $\mathbb{E}\left( g(j) \right) = 0$, we have
\begin{equation*}
\mathbb{E}\left( g(j)^{\mu_j}\overline{g(j)}^{\nu_j} \right) = \left\lbrace \begin{array}{ll}
\mathbb{E}\left( |g(j)|^{2\mu_j} \right) = \frac{1}{M^{\mu_j}}, & \mu_j = \nu_j;\\
0, & \mu_j \ne \nu_j.
\end{array}\right.
\end{equation*}
Thus, under the convention that $k_{0} = k_{m}$,
\begin{equation}\label{eq: exptession for E}
E_{\substack{j_1\dots j_m\\k_1\dots k_m}} = \left\lbrace \begin{array}{ll}
\frac{1}{M^{m}}, & \text{if} ~ \exists \text{ bijection } \alpha: \{1,\dots,m\} \to \{1,\dots,m\}, \\
&\text{s.t. }\forall t\in \{1,\dots,m\} ~  j_t - k_t = j_{\alpha(t)} - k_{\alpha(t) - 1}; \\
0, & \text{otherwise.}
\end{array}\right.
\end{equation}
This concludes the proof.
\end{proof}

The following proposition gives a direct computation for the expected trace for $m = 1$.

\begin{proposition}[Trace Steinhaus]\label{pr:tracesteinhaus}
 Let $g$ be a Steinhaus window and $\Lambda \subset \Z_M \times \Z_M$. Then, 
    \begin{align*}
        \mathbb{E}\left(\Tr H^2 \right) = |\Lambda| - \frac{1}{M}\sum_{k \in \Z_M} |A_k|^2,
    \end{align*}
    where $A_k = \{\ell : (k,\ell) \in \Lambda\}$.
\end{proposition}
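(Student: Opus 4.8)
The plan is to specialize the general trace formula of Lemma~\ref{lemma_trace_formula} to $m = 2$ and evaluate the resulting finite sum by orthogonality. With $m = 2$ the index constraint $j_1 \ne j_2 \ne j_1$ reduces to $j_1 \ne j_2$, and under the cyclic convention $j_3 = j_1$ the phase simplifies to $\sum_{t=1}^2 \ell_t(j_t - j_{t+1}) = (\ell_1 - \ell_2)(j_1 - j_2)$. Hence the formula reads
\begin{equation*}
\mathbb{E}\left(\Tr H^2\right) = \sum_{\substack{j_1, j_2 \in \Z_M \\ j_1 \ne j_2}} \sum_{(k_1, \ell_1) \in \Lambda} \sum_{(k_2, \ell_2) \in \Lambda} e^{\frac{2\pi i}{M}(\ell_1 - \ell_2)(j_1 - j_2)} E_{\substack{j_1 j_2 \\ k_1 k_2}}.
\end{equation*}

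Next I would determine which summands survive, i.e.\ for which indices $E_{\substack{j_1 j_2 \\ k_1 k_2}} \ne 0$. By \eqref{eq: exptession for E} (with the convention $k_0 = k_2$) a nonzero value requires a bijection $\alpha$ of $\{1,2\}$ with $j_t - k_t = j_{\alpha(t)} - k_{\alpha(t)-1}$ for $t = 1,2$. The transposition choice forces $j_1 = j_2$, which is excluded, so only the identity contributes, and it forces $k_1 = k_2$. Thus $E_{\substack{j_1 j_2 \\ k_1 k_2}} = \tfrac{1}{M^2}$ exactly when $k_1 = k_2$ and vanishes otherwise. Writing $k$ for the common value and using $(k,\ell) \in \Lambda \Leftrightarrow \ell \in A_k$, the two $\Lambda$-sums collapse to
\begin{equation*}
\mathbb{E}\left(\Tr H^2\right) = \frac{1}{M^2} \sum_{k \in \Z_M} \sum_{\ell_1, \ell_2 \in A_k} \sum_{\substack{j_1, j_2 \in \Z_M \\ j_1 \ne j_2}} e^{\frac{2\pi i}{M}(\ell_1 - \ell_2)(j_1 - j_2)}.
\end{equation*}

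It then remains to evaluate the inner sum over $j_1 \ne j_2$. Completing it to the full sum over $\Z_M \times \Z_M$ and applying the orthogonality relation $\sum_{j \in \Z_M} e^{2\pi i c j / M} = M\,\mathbf{1}_{\{c \equiv 0\}}$, this sum equals $M^2 \mathbf{1}_{\{\ell_1 = \ell_2\}} - M$, that is $M(M-1)$ when $\ell_1 = \ell_2$ and $-M$ otherwise. Since for each $k$ there are $|A_k|$ pairs with $\ell_1 = \ell_2$ and $|A_k|^2 - |A_k|$ pairs with $\ell_1 \ne \ell_2$, the contribution of each $k$ is $\tfrac{1}{M^2}\big(|A_k| M(M-1) - (|A_k|^2 - |A_k|)M\big) = |A_k| - \tfrac{1}{M}|A_k|^2$. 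Summing over $k$ and using $\sum_{k \in \Z_M} |A_k| = |\Lambda|$ yields the claimed identity. The one step that needs genuine care is the combinatorial reduction to $k_1 = k_2$; the remainder is a routine orthogonality computation.
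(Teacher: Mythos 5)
Your proof is correct and takes essentially the same approach as the paper's: both specialize the trace formula of Lemma~\ref{lemma_trace_formula} to $m=2$, use the bijection condition in \eqref{eq: exptession for E} to reduce to $k_1 = k_2$ (the transposition being excluded by $j_1 \ne j_2$), and finish by orthogonality of characters. The only difference is bookkeeping: the paper splits the $\ell'$-sum into $\ell' = \ell$ and $\ell' \ne \ell$ and uses $\sum_{j' \ne j} e^{2\pi i (\ell-\ell')(j-j')/M} = -1$, whereas you complete the $j$-sum and count pairs, which is the same computation rearranged.
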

\begin{proof}
    Following the proof of Lemma~\ref{lemma_trace_formula}, we obtain that 
    \begin{align*}
        \mathbb{E}\left(\Tr \ H^2 \right) = \sum_{j \in \Z_M} \sum_{j' \in \Z_M \atop j'\neq j} \sum_{(k,\ell) \in \Lambda} \sum_{(k',\ell') \in \Lambda} e^{2\pi i (\ell - \ell') (j - j')/M} E_{j, j' \atop k , k'},
    \end{align*}
    where $E_{j, j' \atop k , k'} := \mathbb{E}\left(g(j - k) \overline{g(j' -k)} g(j' - k') \overline{g(j- k')}\right)$. It follows from~\eqref{eq: exptession for E}, $E_{j,j '\atop k, k'}$ is non-zero if and only if one of the following systems of equations holds
    \begin{align*}
        \begin{cases}
            j - k = j - k' \\
            j' - k = j' - k'
        \end{cases}
        \qquad \qquad
        \begin{cases}
            j - k = j' - k \\
            j' - k' = j' - k
        \end{cases}
    \end{align*}
    It is clear that the first system has a solution if and only if $k = k'$. The second system of equations does not have a solution, as in the sum computing $\mathbb{E}(\Tr H^2)$, we have $j \neq j'$. Therefore, $$E_{j, j' \atop k, k'} = \begin{cases}
        \frac{1}{M^2}, & k = k',\\
        0, & k\ne k'.
    \end{cases}$$

    Let us define $A_k := \{\ell : (k,\ell) \in \Lambda\}$. Clearly, $\sum_{k \in \Z_M} |A_k| = |\Lambda|$. Using the previous observations, we compute the trace expectation for the Steinhaus window as 
    \begin{equation*}
    \begin{split}
    \mathbb{E}(\mathrm{Tr} \ H^2) &= \frac{1}{M^2} \sum_{j, j' \in \Z_M \atop j \neq j'} \sum_{k \in \Z_M} \sum_{\ell \in A_k} \sum_{\ell' \in A_k} e^{\frac{2 \pi i}{M} (\ell - \ell')(j - j')} \\
    &= \frac{1}{M^2}\sum_{j \in \Z_M} \sum_{k \in \Z_M} \sum_{\ell \in A_k} \left(\sum_{\ell' \in A_k \atop \ell \neq \ell'} \sum_{j' \in \Z_M \atop j' \neq j}e^{\frac{2 \pi i}{M} (\ell - \ell')(j - j') }+ \sum_{j' \in \Z_M \atop j' \neq j} 1\right) \\
    &= \frac{1}{M^2 }\sum_{j \in \Z_M} \sum_{k \in \Z_M} \sum_{\ell \in A_k} \left(\sum_{\ell' \in A_k \atop \ell \neq \ell'} -1 + (M - 1)\right) \\
    &\leq \frac{1}{M }\sum_{k \in \Z_M} \sum_{\ell \in A_k} \left(-(|A_k| - 1) + (M - 1)\right) = \frac{1}{M }\sum_{k \in \Z_M} |A_k| \left(M - |A_k|\right) \\
    &= \sum_{k \in \Z_M} |A_k| - \frac{1}{M} |A_k|^2 = |\Lambda| - \frac{1}{M} \sum_{k \in \Z_M} |A_k|^2 
    \end{split}
\end{equation*}
This completes the proof. 
    
\end{proof}

Using this expected trace computation together with Lemma~\ref{lemma_trace_formula}, we obtain the following estimations for the optimal frame bounds of Gabor frames.

\begin{corollary}\label{cor: m2 Steinhaus general}
    Let $g\in \mathbb{C}^M$ be a Steinhaus window and consider a Gabor system $(g, \Lambda)$ with $\Lambda\subset \mathbb{Z}_M\times \mathbb{Z}_M$. Then
    \begin{enumerate}
        \item For any $\varepsilon\in (0,1)$,
\begin{equation*}
\mathbb{P}\left(B_{(g, \Lambda)}\le \frac{|\Lambda|}{M} + \sqrt{\frac{|\Lambda|}{\varepsilon}\left(1 - \frac{|\Lambda|}{M^2}\right)}\right) \ge 1 - \varepsilon.
\end{equation*}

\item  Let $\varnothing \subsetneq F \subset \Z_M$ with $\#F = \alpha M$, and $\delta < 1$ be arbitrary. Assume that $\Lambda \subset F \times \Z_M$ with $|\Lambda| = (1 - p)\alpha M^2$, then
    \begin{align*}
        \mathbb{P}\left(\frac{|\Lambda|}{M}(1-\delta) \le A_{(g, \Lambda)} \le B_{(g, \Lambda)}\le \frac{|\Lambda|}{M}(1+\delta) \right) \ge 1 - \frac{p}{\alpha(1 - p)} \frac{1}{\delta^2}.
    \end{align*}
    \end{enumerate}
\end{corollary}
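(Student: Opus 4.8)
The plan is to derive both parts from the $m=1$ instance of Lemma~\ref{lemma_trace_formula} together with the exact trace computation in Proposition~\ref{pr:tracesteinhaus}, so that the only genuinely new ingredient is a Cauchy--Schwarz lower bound on $\sum_{k\in\Z_M}|A_k|^2$, where $A_k=\{\ell:(k,\ell)\in\Lambda\}$. Recall from Proposition~\ref{pr:tracesteinhaus} that $\mathbb{E}(\Tr H^2)=|\Lambda|-\frac1M\sum_{k\in\Z_M}|A_k|^2$, and that $H=\Phi_\Lambda\Phi_\Lambda^*-\frac{|\Lambda|}{M}I_M$ is self-adjoint with eigenvalues $\sigma_j^2-\frac{|\Lambda|}{M}$.

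For the upper bound in part (1), I would first note that $B_{(g,\Lambda)}=\sigma_{\max}^2(\Phi_\Lambda^*)=\frac{|\Lambda|}{M}+\lambda_{\max}(H)\le \frac{|\Lambda|}{M}+\|H\|_2\le\frac{|\Lambda|}{M}+\|H\|_F=\frac{|\Lambda|}{M}+\sqrt{\Tr H^2}$. Applying Markov's inequality to the nonnegative variable $\Tr H^2$ gives $\mathbb{P}(\Tr H^2\ge \mathbb{E}(\Tr H^2)/\varepsilon)\le\varepsilon$, so with probability at least $1-\varepsilon$ we have $B_{(g,\Lambda)}\le \frac{|\Lambda|}{M}+\sqrt{\mathbb{E}(\Tr H^2)/\varepsilon}$. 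It then remains to bound $\mathbb{E}(\Tr H^2)$ from above, i.e. $\sum_k|A_k|^2$ from below: since $\sum_{k\in\Z_M}|A_k|=|\Lambda|$ and there are $M$ summands, Cauchy--Schwarz yields $\sum_k|A_k|^2\ge |\Lambda|^2/M$, hence $\mathbb{E}(\Tr H^2)\le|\Lambda|-\frac{|\Lambda|^2}{M^2}=|\Lambda|\big(1-\frac{|\Lambda|}{M^2}\big)$. Substituting this gives exactly the claimed bound.

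For part (2) I would apply Lemma~\ref{lemma_trace_formula} directly with $m=1$, which bounds the failure probability of the two-sided estimate by $\frac{M^2}{|\Lambda|^2}\delta^{-2}\mathbb{E}(\Tr H^2)$. The key refinement is that, because $\Lambda\subset F\times\Z_M$, we have $A_k=\varnothing$ for $k\notin F$, so the sum $\sum_k|A_k|^2$ ranges over only the $|F|=\alpha M$ indices in $F$. Cauchy--Schwarz over these $\alpha M$ terms, using $\sum_{k\in F}|A_k|=|\Lambda|=(1-p)\alpha M^2$, now gives the sharper bound $\sum_{k\in F}|A_k|^2\ge |\Lambda|^2/(\alpha M)=(1-p)^2\alpha M^3$. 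Plugging in yields $\mathbb{E}(\Tr H^2)\le (1-p)\alpha M^2-(1-p)^2\alpha M^2=p(1-p)\alpha M^2$, and substituting $|\Lambda|=(1-p)\alpha M^2$ into $\frac{M^2}{|\Lambda|^2}\delta^{-2}\mathbb{E}(\Tr H^2)$ simplifies, after cancellation, to $\frac{p}{\alpha(1-p)\delta^2}$, which is precisely the stated failure probability.

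The computations are routine once the right inequality is identified; the only point requiring care is that the improved erasure bound in part (2) comes entirely from localizing the Cauchy--Schwarz estimate to the $\alpha M$ columns indexed by $F$ rather than all $M$ columns, which is what produces the $1/\alpha$ factor. I would also double-check the direction of each inequality (Markov forces an upper bound on $\Tr H^2$, hence a \emph{lower} bound on $\sum_k|A_k|^2$, which is why Cauchy--Schwarz is applied that way) and verify the arithmetic cancellation in part (2); no substantial obstacle is expected beyond this bookkeeping.
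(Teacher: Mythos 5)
Your proposal is correct and follows essentially the same route as the paper: Proposition~\ref{pr:tracesteinhaus} for $\mathbb{E}(\Tr H^2)$, Cauchy--Schwarz on $\sum_{k}|A_k|^2$ (over all of $\Z_M$ for part (1), localized to the $\alpha M$ indices in $F$ for part (2), which is exactly where the $1/\alpha$ comes from in the paper as well), and a Markov-type bound via Lemma~\ref{lemma_trace_formula} with $m=1$. The only cosmetic difference is that in part (1) you inline the Markov/Frobenius argument directly ($B_{(g,\Lambda)}\le |\Lambda|/M+\|H\|_F$ plus Markov on $\Tr H^2$) instead of invoking the lemma with the choice $\delta=\sqrt{(M^2-|\Lambda|)/(\varepsilon|\Lambda|)}$; since the lemma's own proof is precisely that argument, the substance is identical.
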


\begin{proof}

To prove the first statement, note that by Cauchy-Schwarz inequality, $\sum_{k\in \mathbb{Z}_M} |A_k|^2 \ge \frac{1}{M} \left( \sum_{k\in \mathbb{Z}_M} |A_k| \right)^2$, and thus 
\begin{align*}
        |\Lambda| - \frac{1}{M} \sum_{k \in F} |A_k|^2 \leq |\Lambda| \left(1 - \frac{|\Lambda|}{M^2} \right).
    \end{align*}
Then, setting $\delta = \sqrt{\frac{M^2 - |\Lambda|}{\varepsilon |\Lambda|}}$ for some $\varepsilon\in (0,1)$ and using Lemma~\ref{lemma_trace_formula}, we obtain the desired inequality with probability at least $1 - \varepsilon$.

\medskip

To prove the second part of the corollary, note that 
    \begin{align*}
        |\Lambda| - \frac{1}{M} \sum_{k \in F} |A_k|^2 \leq |\Lambda| - \frac{1}{M \#F} |\Lambda|^2 \leq |\Lambda| - \frac{1}{\alpha M^2} |\Lambda|^2.
    \end{align*}
Hence, using $|\Lambda| = (1 - p)\alpha M^2$,  

    \begin{align*}
         \frac{M^2}{|\Lambda|^2} \mathbb{E} \left(\mathrm{Tr} \ H^2\right) \leq \frac{M^2}{|\Lambda|^2} \left(|\Lambda| - \frac{1}{\alpha M^2} |\Lambda|^2\right) \leq \frac{p}{\alpha(1 - p)}. 
    \end{align*}
The result follows by Lemma~\ref{lemma_trace_formula}.
    
\end{proof}

We note that the bound obtained in Corollary \ref{cor: m2 Steinhaus general} is tight for a full Gabor frame, \linebreak when~${\Lambda = \mathbb{Z}_M\times \mathbb{Z}_M}$. In the case when $|\Lambda| = \alpha M^2$, for some $\alpha\in (0,1)$, the proven bound gives $B_{(g, \Lambda)}\le \left(\alpha + \sqrt{\frac{\alpha(1 - \alpha)}{\varepsilon}}\right)M = \left(1 + \sqrt{\frac{(1 - \alpha)}{\alpha\varepsilon}}\right)\frac{|\Lambda|}{M}$ with probability at least $1 - \varepsilon$. That is, the bound on the upper frame bound $B_{(g, \Lambda)}$ in this case is the same (up to a constant), as the one obtained in~\cite{latala2005some} for random frames with frame vectors whose entries are independent identically distributed random variables with bounded fourth moment.

\medskip

The trace evaluation method established in Lemma~\ref{lemma_trace_formula} can be applied for other random windows, such as Gaussian windows. For the trace expectation in this case, we have the following result.

\begin{proposition}[Trace Gaussian]\label{pr:trgauss}
Let $g$ be a Gaussian window and $\Lambda \subset \Z_M \times \Z_M$. Then,
\begin{align}
    \mathbb{E}\left(\Tr H^2\right) = |\Lambda|.
\end{align}
\end{proposition}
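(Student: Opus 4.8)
The plan is to follow the trace-expansion strategy of Lemma~\ref{lemma_trace_formula} and Proposition~\ref{pr:tracesteinhaus}, but to recompute the fourth-moment kernel $E_{j,j'\atop k,k'}$ for the complex Gaussian window. The essential conceptual point is that, unlike the Steinhaus case, the diagonal of $\Phi_\Lambda\Phi_\Lambda^*$ is no longer deterministic: one has $\Phi_\Lambda\Phi_\Lambda^*(j,j)=\sum_{(k,\ell)\in\Lambda}|g(j-k)|^2$, which equals $\frac{|\Lambda|}{M}$ only in expectation. Consequently $H$ has a nonvanishing diagonal, the restriction $j\neq j'$ used in the Steinhaus computation is no longer available, and the diagonal terms must be carried through. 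To avoid bookkeeping, I would work from the algebraic identity
\[
\Tr H^2 = \Tr\!\big((\Phi_\Lambda\Phi_\Lambda^*)^2\big) - 2\tfrac{|\Lambda|}{M}\Tr(\Phi_\Lambda\Phi_\Lambda^*) + \tfrac{|\Lambda|^2}{M},
\]
which holds for any frame with $N=|\Lambda|$ (cf.\ equation~\eqref{eq: FP via Tr(H)^2}), and take expectations term by term.

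The middle term is immediate: since each $\pi(\lambda)$ is unitary, $\Tr(\Phi_\Lambda\Phi_\Lambda^*)=|\Lambda|\,\|g\|_2^2$, so $\mathbb{E}\,\Tr(\Phi_\Lambda\Phi_\Lambda^*)=|\Lambda|\cdot M\cdot\tfrac1M=|\Lambda|$, and the last term is the constant $\frac{|\Lambda|^2}{M}$. The work is in the first term, $\mathbb{E}\,\Tr\!\big((\Phi_\Lambda\Phi_\Lambda^*)^2\big)=\mathbb{E}\|\Phi_\Lambda\Phi_\Lambda^*\|_{\mathrm{HS}}^2=\sum_{j,j'}\sum_{(k,\ell),(k',\ell')\in\Lambda}e^{2\pi i(\ell-\ell')(j-j')/M}\,E_{j,j'\atop k,k'}$, with $E_{j,j'\atop k,k'}=\mathbb{E}\big(g(j-k)\overline{g(j'-k)}\,g(j'-k')\overline{g(j-k')}\big)$. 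Here I would invoke Isserlis/Wick for circularly symmetric complex Gaussians: only pairings of an unconjugated factor with a conjugated one survive, using $\mathbb{E}(g(a)\overline{g(b)})=\tfrac1M\delta_{ab}$ and $\mathbb{E}(g(a)g(b))=0$. The two surviving pairings give
\[
E_{j,j'\atop k,k'}=\frac{1}{M^2}\big(\delta_{j,j'}+\delta_{k,k'}\big).
\]
Relative to the Steinhaus kernel (which, off the diagonal, is $\frac{1}{M^2}\delta_{k,k'}$), this carries the extra term $\frac{1}{M^2}\delta_{j,j'}$; it is precisely the contribution of the fluctuating diagonal, reflecting $\mathbb{E}|g(j)|^4=2/M^2$ rather than the deterministic $1/M^2$ of the torus.

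Finally I would substitute this kernel and evaluate the geometric sums. The $\delta_{k,k'}$ part forces $k=k'$, reduces the phase sum to $\sum_{j,j'}e^{2\pi i(\ell-\ell')(j-j')/M}=M^2\delta_{\ell,\ell'}$, and hence collapses to $\sum_k|A_k|=|\Lambda|$. The $\delta_{j,j'}$ part kills the phase factor and yields $\frac{1}{M^2}\cdot M\cdot|\Lambda|^2=\frac{|\Lambda|^2}{M}$. Thus $\mathbb{E}\,\Tr\!\big((\Phi_\Lambda\Phi_\Lambda^*)^2\big)=|\Lambda|+\frac{|\Lambda|^2}{M}$, and assembling the three terms gives $\mathbb{E}\,\Tr H^2=\big(|\Lambda|+\frac{|\Lambda|^2}{M}\big)-\frac{2|\Lambda|^2}{M}+\frac{|\Lambda|^2}{M}=|\Lambda|$, as claimed. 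The main obstacle, and the only place where the Gaussian and Steinhaus arguments genuinely diverge, is correctly accounting for the extra Wick contraction and verifying that the resulting $O(|\Lambda|^2/M)$ contributions cancel exactly against the cross term and the identity term; everything else is the same summation as in Proposition~\ref{pr:tracesteinhaus}.
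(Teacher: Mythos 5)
Your proof is correct, and it takes a genuinely different route from the paper's. The paper splits the trace according to the matrix structure of $H$: it writes $\Tr H^2 = \sum_j H(j,j)^2 + \sum_{j\neq j'} H(j,j')H(j',j)$, evaluates the off-diagonal part exactly as in the Steinhaus case (Proposition~\ref{pr:tracesteinhaus}), and then computes the diagonal fluctuation term by a direct case analysis of fourth moments using $\mathbb{E}|g(j)|^2 = \tfrac1M$ and $\mathbb{E}|g(j)|^4 = \tfrac{2}{M^2}$; the structure-dependent quantity $\tfrac1M\sum_k |A_k|^2$ appears in both pieces with opposite signs and cancels only at the very end. You instead expand $H^2$ algebraically in terms of $\Phi_\Lambda\Phi_\Lambda^*$ and compute $\mathbb{E}\,\Tr\bigl((\Phi_\Lambda\Phi_\Lambda^*)^2\bigr)$ in one stroke via the Wick/Isserlis formula for circularly symmetric complex Gaussians, which yields the unified kernel $\tfrac{1}{M^2}(\delta_{j,j'}+\delta_{k,k'})$ valid for all index configurations (including the coincidence $j=j'$, $k=k'$, where it correctly reproduces $\mathbb{E}|g|^4 = \tfrac{2}{M^2}$). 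Your route buys two things: the case analysis and the diagonal/off-diagonal bookkeeping disappear, and it becomes transparent why the answer depends only on $|\Lambda|$ and not on the structure of $\Lambda$ --- the two delta terms contribute $|\Lambda|$ and $\tfrac{|\Lambda|^2}{M}$ respectively, and the set sizes $|A_k|$ never enter. It also scales more gracefully to higher moments $\Tr H^{2m}$, where Wick pairings replace the bijection condition of Lemma~\ref{lemma_trace_formula}. The paper's route, in exchange, is more elementary (no appeal to Wick's theorem) and runs exactly parallel to the Steinhaus computation, which makes the comparison drawn in the paper's subsequent remark --- that the Gaussian diagonal fluctuation exactly compensates the Steinhaus off-diagonal deficit $|\Lambda| - \tfrac1M\sum_k|A_k|^2$ --- explicit rather than hidden inside the $\delta_{j,j'}$ contraction. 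One small point of care in your write-up: the identity you cite from equation~\eqref{eq: FP via Tr(H)^2} is used there under a unit-norm assumption in its final step, but you correctly use only the purely algebraic part of it, keeping $\Tr(\Phi_\Lambda\Phi_\Lambda^*)$ random and taking expectations, so no circularity or hypothesis mismatch arises.
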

\begin{proof}
    The proof of this statement is mostly analogous to the computation in case of a Steinhaus window. The key observation to make is that $H$ no longer has zeros on the diagonal. However, by similar computation we obtain
    \begin{align*}
        H^2(j,j) &=  H(j,j)^2 +  \sum_{j' \in \Z_m \atop j' \neq j} H(j,j') H(j',j)\\
        &=  H(j,j)^2 + \sum_{j'\in \Z_M \atop j \neq j'} \sum_{(k,\ell) \in \Lambda} \sum_{(k',\ell') \in \Lambda} e^{2\pi i (\ell - \ell') (j - j')/M} g(j - k) \overline{g(j' -k)} g(j' - k') \overline{g(j- k')}.
    \end{align*}
    Therefore,
    \begin{align*}
        \mathbb{E}(\Tr H^2)
        &=  \sum_{j \in \Z_M} \mathbb{E}(H(j,j)^2) + \sum_{j\in \Z_M}\sum_{j'\in \Z_M \atop j \neq j'} \sum_{(k,\ell) \in \Lambda} \sum_{(k',\ell') \in \Lambda} e^{2\pi i (\ell - \ell') (j - j')/M} E_{j,j' \atop k,k'}.
    \end{align*}
    Let us observe that, since $j' \ne j$, and using independence of the entries of $g$, 
    $$E_{j,j' \atop k, k'}  = \begin{cases}
        0, & k \ne k;\\
        \frac{1}{M^2}, & k = k. 
    \end{cases}$$
    Thus, 
    \begin{align*}
       & \sum_{j\in \Z_M}\sum_{j'\in \Z_M \atop j \neq j'} \sum_{(k,\ell) \in \Lambda} \sum_{(k',\ell') \in \Lambda} e^{2\pi i (\ell - \ell') (j - j')/M} E_{j,j' \atop k,k'} = \frac{1}{M^2} \sum_{j\in \Z_M}\sum_{j'\in \Z_M \atop j \neq j'} \sum_{(k,\ell) \in \Lambda}\left( 1 + \sum_{\ell'\in A_k \atop \ell'\ne \ell} e^{2\pi i (\ell - \ell') (j - j')/M}\right)\\
       & = \frac{\vert \Lambda \vert M(M-1)}{M^2} + \frac{1}{M^2} \sum_{j\in \Z_M} \sum_{(k,\ell) \in \Lambda} \sum_{\ell'\in A_k \atop \ell'\ne \ell} \sum_{j'\in \Z_M \atop j \neq j'} e^{2\pi i (\ell - \ell') (j - j')/M} = \frac{\vert \Lambda \vert (M-1)}{M} - \frac{1}{M} \sum_{(k,\ell) \in \Lambda} \sum_{\ell'\in A_k \atop \ell'\ne \ell} 1\\
       & = \vert \Lambda \vert - \frac{\vert \Lambda \vert}{M} - \frac{1}{M} \sum_{k\in \mathbb{Z}_M} \vert A_k\vert (\vert A_k \vert - 1)  = \vert \Lambda \vert - \frac{1}{M}\sum_{k\in \mathbb{Z}_M} \vert A_k\vert^2.
    \end{align*}
    
    It remains to calculate $\sum_{j \in \Z_M} \mathbb{E}(H(j,j)^2)$. As $g$ is a Gaussian window, we have that ${g(j) = a(j) + b(j) i}$. It is easily verified that $\mathbb{E}(|g(j)|^2) = \frac{1}{M}$ and $\mathbb{E}(|g(j)|^4) = \frac{2}{M^2}$. Observe that $H(j,j) = - \frac{|\Lambda|}{M} +  \sum_{(k,l) \in \Lambda} |g(j - k)|^2$ and hence
    \begin{align*}
        H(j,j)^2 &= \left(-\frac{|\Lambda|}{M} + \sum_{(k,l) \in \Lambda} |g(j - k)|^2\right)^2 \\ 
    & = \frac{|\Lambda|^2}{M^2} - 2 \frac{|\Lambda|}{M} \sum_{(k,l) \in \Lambda} |g(j - k)|^2 + \sum_{(k,\ell) \in \Lambda} \sum_{(k',\ell') \in \Lambda} |g(j - k)|^2 |g(j - k')|^2.
    \end{align*}
    It follows that,
    \begin{align*}
    \mathrm{E}(H(j,j)^2) = - \frac{|\Lambda|^2}{M^2}+ \mathbb{E}\left(\sum_{(k,\ell) \in \Lambda} \sum_{(k',\ell') \in \Lambda} |g(j - k)|^2 |g(j - k')|^2\right).
    \end{align*}

    Observe that if $k \neq k'$, then $\mathbb{E}(|g(j - k)|^2 |g(j - k')|^2) = \frac{1}{M^2}$ and otherwise $\mathbb{E}(|g(j - k)|^2 |g(j - k')|^2) = \mathbb{E}(|g(j - k)|^4) = \frac{2}{M^2}$. It is clear that there are precisely $\sum_{k \in \Z_M} |A_k|^2$ occurrences where $k = k'$. Therefore, 
    \begin{align*}
        \mathbb{E}\left(\sum_{(k,\ell) \in \Lambda} \sum_{(k',\ell') \in \Lambda} |g(j - k)|^2 |g(j - k')|^2\right) &= \frac{2}{M^2}\sum_{k \in \Z_M} |A_k|^2 + \left(|\Lambda|^2 - \sum_{k \in \Z_M} |A_k|^2\right) \frac{1}{M^2} \\
        &= \frac{|\Lambda|^2}{M^2} + \frac{1}{M^2} \sum_{k \in \Z_M} |A_k|^2
    \end{align*}

    By putting everything together, we then obtain that
    \begin{align*}
        \mathbb{E}(\Tr \ H^2) = \left(|\Lambda| - \frac{1}{M} \sum_{k \in \Z_M} |A_k|^2\right) + \frac{1}{M} \sum_{k \in \Z_M} |A_k|^2 = |\Lambda|.
    \end{align*}
    This concludes the proof.    
\end{proof}

We can now use this to evaluate the upper frame bound for a Gabor frame with a Gaussian window.

\begin{corollary}\label{cor:gausstrace}
Let $g$ be a Gaussian window and $\Lambda \subset \Z_M \times \Z_M$ of size $\vert \Lambda \vert = (1 - p)M^2$, for some $p\in (0,1)$. Then, for any $\varepsilon \in (0,1)$, 
\begin{align*}
\mathbb{P}\left(B_{(g,\Lambda)} \leq M\left(1 - p + \sqrt{\frac{1 - p}{\varepsilon}}\right)\right) \ge 1 - \varepsilon.
\end{align*}
\end{corollary}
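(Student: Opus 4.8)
The plan is to combine the trace computation from Proposition~\ref{pr:trgauss} with the first part of Lemma~\ref{lemma_trace_formula} applied at $m=1$, exactly as was done in the first statement of Corollary~\ref{cor: m2 Steinhaus general} for the Steinhaus case. The only difference is the value of the expected trace: here Proposition~\ref{pr:trgauss} gives the clean identity $\mathbb{E}(\Tr H^2) = |\Lambda|$, with no correction term depending on the $|A_k|$.

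Concretely, first I would invoke Lemma~\ref{lemma_trace_formula} with $m = 1$, which states that
\begin{align*}
\mathbb{P}\left(\frac{|\Lambda|}{M}(1-\delta) \le A_{(g,\Lambda)} \le B_{(g,\Lambda)} \le \frac{|\Lambda|}{M}(1+\delta)\right) \ge 1 - \frac{M^2}{|\Lambda|^2}\delta^{-2}\,\mathbb{E}(\Tr H^2).
\end{align*}
Substituting $\mathbb{E}(\Tr H^2) = |\Lambda|$ from Proposition~\ref{pr:trgauss}, the failure probability becomes $\frac{M^2}{|\Lambda|^2}\delta^{-2}|\Lambda| = \frac{M^2}{|\Lambda|\,\delta^2}$. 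To make this equal to $\varepsilon$, I would set $\delta = \sqrt{\frac{M^2}{\varepsilon|\Lambda|}}$, so that with probability at least $1 - \varepsilon$ the upper frame bound satisfies
\begin{align*}
B_{(g,\Lambda)} \le \frac{|\Lambda|}{M}(1 + \delta) = \frac{|\Lambda|}{M} + \frac{|\Lambda|}{M}\sqrt{\frac{M^2}{\varepsilon|\Lambda|}} = \frac{|\Lambda|}{M} + \sqrt{\frac{|\Lambda|}{\varepsilon}}.
\end{align*}

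Finally I would substitute the hypothesis $|\Lambda| = (1-p)M^2$. The first term is $\frac{(1-p)M^2}{M} = (1-p)M$, and the second term is $\sqrt{\frac{(1-p)M^2}{\varepsilon}} = M\sqrt{\frac{1-p}{\varepsilon}}$, which together give the claimed bound $B_{(g,\Lambda)} \le M\left(1 - p + \sqrt{\frac{1-p}{\varepsilon}}\right)$. The argument requires no obstacle beyond these routine substitutions, since all the analytic work has already been absorbed into Proposition~\ref{pr:trgauss}. The one point worth a remark is that the Gaussian window is not unit-norm (so Corollary~\ref{cr:fpnerf} does not apply directly), which is precisely why one must route through Lemma~\ref{lemma_trace_formula} and the explicit Gaussian trace identity rather than through the frame-potential formulation.
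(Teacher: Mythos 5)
Your proposal is correct and follows essentially the same route as the paper: invoke the first part of Lemma~\ref{lemma_trace_formula} with $m=1$, substitute $\mathbb{E}(\Tr H^2) = |\Lambda|$ from Proposition~\ref{pr:trgauss}, and choose $\delta$ so that the failure probability equals $\varepsilon$ (your $\delta = \sqrt{M^2/(\varepsilon|\Lambda|)}$ is exactly the paper's $\delta = 1/\sqrt{\varepsilon(1-p)}$ after substituting $|\Lambda| = (1-p)M^2$). Your closing remark about why the frame-potential corollary cannot be used (the Gaussian window is not unit-norm) is also accurate and consistent with the paper's reasoning.
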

\begin{proof}
Lemma~\ref{lemma_trace_formula} and Proposition~\ref{pr:trgauss} imply that
\begin{align*}
\mathbb{P}\left(B_{(g,\Lambda)} > (1 + \delta)\frac{|\Lambda|}{M} \right) \le \frac{M^{2}}{|\Lambda|^{2}}\frac{1}{\delta^{2}}\mathbb{E}\left(\Tr H^{2}\right) = \frac{1}{1 - p} \frac{1}{\delta^2}.
\end{align*}
The proof is concluded by setting $\delta = \frac{1}{\sqrt{\varepsilon(1 - p)}}$. 
\end{proof}

\subsection{Gabor subframes with a random $\Lambda$}\label{sec:randomlambda}

Let us now consider the case of a Gabor frame with a randomly selected frame set $\Lambda$. Roughly speaking, the result below shows that, for any $\epsilon\in (0,1)$, \emph{most} of the subframes $(g, \Lambda)$ of the full Gabor frame $(g, \mathbb{Z}_M\times \mathbb{Z}_M)$ with $|\Lambda| = O(M^{1+\epsilon})$ are well-conditioned.

\begin{theorem}\label{th_sing_val_rand_lambda}
Let $g\in \mathbb{C}^M$ be a Steinhaus window. For any even $m\in \mathbb{N}$, consider a Gabor system $(g, \Lambda)$ with a random set $\Lambda\subset \mathbb{Z}_M\times \mathbb{Z}_M$ constructed so that the events $\{(k,\ell)\in \Lambda\}$ are independent for all $(k,\ell)\in \mathbb{Z}_M\times \mathbb{Z}_M$ and have probability $\tau = \frac{C\log M}{M^{\frac{m-1}{m}}}$, for a sufficiently large constant $C>0$ depending only on $m$. Then, for any $\delta >0$, 
\begin{equation*}
\mathbb{P}\left\lbrace\frac{|\Lambda|}{M}(1-\delta) \le A_{(g, \Lambda)} \le B_{(g, \Lambda)}\le \frac{|\Lambda|}{M}(1+\delta)\right\rbrace\ge 1 - \varepsilon,
\end{equation*}
\noindent where $\varepsilon\in (0,1)$ depends on $m$, $\delta$, and the choice of $C$.
\end{theorem}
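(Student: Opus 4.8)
The plan is to apply the moment bound of Lemma~\ref{lemma_trace_formula} at order $2m$ and to evaluate the mixed expectation $\mathbb{E}_{g,\Lambda}\bigl(\Tr H^{2m}\bigr)$ over both the random Steinhaus window $g$ and the random set $\Lambda$. A first wrinkle is that $\vert\Lambda\vert$ is now random, so the target interval $\bigl[\tfrac{\vert\Lambda\vert}{M}(1-\delta),\tfrac{\vert\Lambda\vert}{M}(1+\delta)\bigr]$ has random endpoints. I would therefore isolate the event $G=\bigl\{\vert\Lambda\vert\ge(1-\delta')\tau M^2\bigr\}$ for a small fixed $\delta'$; since $\vert\Lambda\vert$ is a sum of $M^2$ independent $\mathrm{Bernoulli}(\tau)$ variables with mean $\tau M^2=C\log M\cdot M^{1+1/m}\to\infty$, a Chernoff bound makes $\mathbb{P}(G^c)$ negligible. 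On $G$ the good event contains $\bigl\{\Vert H\Vert_2\le(1-\delta')\delta\,\tau M\bigr\}$, whose threshold is deterministic, so Markov's inequality with $\Vert H\Vert_2^{2m}\le\Tr H^{2m}$ gives
\begin{equation*}
\mathbb{P}(\text{failure})\le\mathbb{P}(G^c)+\frac{\mathbb{E}_{g,\Lambda}\bigl(\Tr H^{2m}\bigr)}{\bigl((1-\delta')\delta\,\tau M\bigr)^{2m}}.
\end{equation*}
The order $2m$ is essential: the second moment alone (Proposition~\ref{pr:tracesteinhaus}) yields a failure bound of order $1/(\tau\delta^2)\to\infty$, so only high even moments can win.

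Next I would turn the mixed expectation into a purely combinatorial sum. Taking $\mathbb{E}_\Lambda$ of the Steinhaus trace formula in Lemma~\ref{lemma_trace_formula} at order $2m$, independence of the inclusion events $\{(k,\ell)\in\Lambda\}$ lets me replace each restricted sum $\sum_{(k_t,\ell_t)\in\Lambda}$ by an unrestricted sum over $\Z_M\times\Z_M$, weighting every index tuple by $\tau^{s}$, where $s$ is the number of \emph{distinct} pairs among $(k_1,\ell_1),\dots,(k_{2m},\ell_{2m})$:
\begin{equation*}
\mathbb{E}_{g,\Lambda}\bigl(\Tr H^{2m}\bigr)=\sum_{\substack{j_1,\dots,j_{2m}\in\Z_M\\ j_t\ne j_{t+1}}}\ \sum_{(k_1,\ell_1),\dots,(k_{2m},\ell_{2m})}e^{\frac{2\pi i}{M}\sum_{t}\ell_t(j_t-j_{t+1})}\,E_{\substack{j_1\dots j_{2m}\\ k_1\dots k_{2m}}}\,\tau^{s},
\end{equation*}
with $E$ as in Lemma~\ref{lemma_trace_formula}, nonzero and equal to $M^{-2m}$ exactly when the bijection condition holds.

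The heart of the argument — and the main obstacle — is the classification of these terms. I would group them by (i) the set partition $P$ of $\{1,\dots,2m\}$ recording which pairs $(k_t,\ell_t)$ coincide, which contributes the weight $\tau^{\vert P\vert}$, and (ii) the bijection $\alpha$ witnessing $E\ne0$, which imposes the linear relations $k_{\alpha(t)-1}-k_t=j_{\alpha(t)}-j_t$. Because $\ell_t$ is constant on each block of $P$, summing the phase over the block values forces, for each block $B$, the cancellation condition $\sum_{t\in B}(j_t-j_{t+1})\equiv0\pmod M$; every genuinely nontrivial such condition removes one free $j$-variable and hence a factor $M$. Balancing the net power of $M$ against the weights $M^{-2m}\tau^{\vert P\vert}$, I expect the dominant contribution to come from the generic pairings with $\vert P\vert=m$, giving a term of order $c_m(\tau M)^{m}M$, while all degenerate configurations (extra coincidences among the $j$'s, or fewer distinct pairs) are suppressed by additional factors of $M^{-1}$ or of $(\tau M)^{-1}$ with $\tau M\gg1$. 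This is precisely the step refining \cite[Lemma~3.4]{pfander2010sparsity}, and the difficulty is that the single weight $\tau^{\vert P\vert}$ couples the $k$- and $\ell$-coordinates simultaneously — a feature absent for matrices with independent entries — which makes the bookkeeping delicate; the evenness of $m$ is used here to guarantee that the perfectly paired configurations dominate the count.

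Finally, inserting $\mathbb{E}_{g,\Lambda}\bigl(\Tr H^{2m}\bigr)\le c_m(\tau M)^{m}M$ into the Markov estimate and using $\tau M=C\log M\cdot M^{1/m}$, so that $(\tau M)^m=(C\log M)^m M$, gives
\begin{equation*}
\frac{c_m(\tau M)^{m}M}{\bigl((1-\delta')\delta\,\tau M\bigr)^{2m}}=\frac{c_m}{\bigl((1-\delta')\delta\bigr)^{2m}}\cdot\frac{M}{(\tau M)^{m}}=\frac{c_m'}{(C\log M)^{m}\,\delta^{2m}},
\end{equation*}
which tends to $0$ and lies below any prescribed level once $C$ is large. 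Note that the powers of $M$ cancel exactly, so the whole scheme is critical: it is precisely the logarithmic factor in $\tau$ and the freedom to enlarge $C$ that close the gap, which is exactly why $\tau$ takes the stated form. Adding $\mathbb{P}(G^c)$ then yields the claimed $\varepsilon=\varepsilon(m,\delta,C)\in(0,1)$.
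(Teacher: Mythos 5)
Your overall architecture is sound: isolating the event $G=\{|\Lambda|\ge(1-\delta')\tau M^2\}$ via a Chernoff bound so that the random threshold $\delta|\Lambda|/M$ can be replaced by the deterministic $\delta(1-\delta')\tau M$, applying Markov at order $2m$ with $\Vert H\Vert_2^{2m}\le\Tr H^{2m}$, and rewriting $\mathbb{E}_{g,\Lambda}(\Tr H^{2m})$ as an unrestricted sum weighted by $\tau^{s}$ ($s$ the number of distinct pairs) are all correct, as is your observation that the second moment cannot suffice. However, there is a genuine gap exactly where the theorem lives: the estimate $\mathbb{E}_{g,\Lambda}(\Tr H^{2m})\le c_m(\tau M)^{m}M$ is asserted (``I expect the dominant contribution\dots''), not proved, and you yourself flag the coupling of the $k$- and $\ell$-coordinates through the weight $\tau^{|P|}$ as the delicate point without resolving it. To close the argument you would have to show, for every coincidence partition $P$ of $\{1,\dots,2m\}$, that the number of configurations $(j_1,\dots,j_{2m},k_1,\dots,k_{2m})$ compatible with some bijection $\alpha$ (so that $E\neq 0$) \emph{and} with the cancellation conditions $\sum_{t\in B}(j_t-j_{t+1})\equiv 0\pmod M$ for each block $B$ is $O_m(M^{2m+1})$; this requires, for instance, verifying that singleton blocks force $j_t=j_{t+1}$ (hence contribute zero, so only partitions with all blocks of size at least $2$, i.e.\ $|P|\le m$, survive) and that the block conditions have rank exactly $|P|-1$ as linear functionals of the $j$'s. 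None of this bookkeeping appears in the proposal, and without it the bound $c_m(\tau M)^m M$ --- and hence the final display --- is unsupported.

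It is worth noting that the paper deliberately avoids this annealed combinatorics and takes a different (quenched) route: it conditions on $\Lambda$, uses the Fourier-bias concentration of Corollary~\ref{cor_sum_rand_roots_of_unity} to bound every random exponential sum $\sum_{\ell\in A_{k}}e^{2\pi i \ell q/M}$ by $C'\log M$ with high probability over $\Lambda$, and on that event only needs the count of pairs $(j,k)$ with $E\neq 0$, which the cycle-decomposition analysis bounds by $m!\,M^{m+1}$; the resulting $\log^m M$ loss is absorbed by the choice of $\tau$, giving failure probability of order $(2C'/C)^m m!\,\delta^{-m}$ plus terms vanishing in $M$. Your annealed route is a genuine alternative and, if completed, would even yield a failure probability decaying like $(C\log M)^{-m}$ rather than a small constant; but as written it trades the paper's two concrete estimates for one unproven combinatorial claim. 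A final small correction: the evenness of $m$ has nothing to do with ``paired configurations dominating''; in the paper it is needed so that Lemma~\ref{lemma_trace_formula} can be invoked at the even order $m$ itself, and in your version it plays no role at all since you work at order $2m$.
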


\begin{proof}
For a realization of $\Lambda$, Lemma~\ref{lemma_trace_formula} implies
\begin{equation*}
\begin{gathered}
\mathbb{P}_g\left\lbrace\frac{|\Lambda|}{M}(1-\delta) \le A_{(g, \Lambda)} \le B_{(g, \Lambda)}\le \frac{|\Lambda|}{M}(1+\delta)\right\rbrace \geq 1 - \frac{M^{m}}{|\Lambda|^{m}}\delta^{-m}\mathbb{E}_g(\Tr H^{m}),\\
\mathbb{E}_g\left(\Tr H^m \right) = \sum_{\substack{j_1, j_2, \dots, j_m\in \mathbb{Z}_M,\\j_1\ne j_2\ne\dots\ne j_m\ne j_{1}}}\sum_{(k_1, \ell_1)\in \Lambda}  \dots \sum_{(k_m, \ell_m)\in \Lambda} e^{\frac{2\pi i}{M} \sum_{t = 1}^m \ell_t (j_t - j_{t+1})} E_{\substack{j_1\dots j_m\\k_1\dots k_m}},
\end{gathered}
\end{equation*}
\noindent where $E_{\substack{j_1\dots j_m\\k_1\dots k_m}} =\frac{1}{M^{m}}$ if there exists a permutation $\alpha\in \Sigma_m$, such that, for  every $t\in \{1,\dots,m\}$, $j_t - k_t = j_{\alpha(t)} - k_{\alpha(t) - 1}$; and $E_{\substack{j_1\dots j_m\\k_1\dots k_m}} = 0$ otherwise.

As before, let us denote $A_k = \{\ell\in \mathbb{Z}_M, \text{ s.t. }(k,\ell)\in \Lambda\}$. After rearranging the sum in the trace formula above, we have
\begin{equation}\label{eq: trace expression}
\begin{split}
\mathbb{E}\left(\Tr H^m \right) & = \sum_{\substack{j_1, j_2, \dots, j_m\in \mathbb{Z}_M,\\j_1\ne j_2\ne\dots\ne j_m\ne j_{1}}}\sum_{k_1, k_2, \dots, k_m\in \mathbb{Z}_M} E_{\substack{j_1\dots j_m\\k_1\dots k_m}} \sum_{\ell_1\in A_{k_1}}  \dots \sum_{\ell_m\in A_{k_m}} e^{\frac{2\pi i}{M} \sum_{t = 1}^m \ell_t (j_t - j_{t+1})}\\
& = \sum_{\substack{j_1, j_2, \dots, j_m\in \mathbb{Z}_M,\\j_1\ne j_2\ne\dots\ne j_m\ne j_{1}}}\sum_{k_1, k_2, \dots, k_m\in \mathbb{Z}_M} E_{\substack{j_1\dots j_m\\k_1\dots k_m}} \prod_{t = 1}^m \sum_{\ell_t\in A_{k_t}} e^{\frac{2\pi i}{M}\ell_t (j_t - j_{t+1})}
\end{split}
\end{equation}
We note that, by the construction of $\Lambda$, each set $A_{k_t}$, $t\in \{1,\dots, m\}$, is a random subset of $\mathbb{Z}_M$, such that the events $\{\ell\in A_{k_t}\}$, $\ell\in \mathbb{Z}_M$, are independent and have probability $\tau$. Then Corollary \ref{cor_sum_rand_roots_of_unity} implies that, for every $t\in \{1,\dots, m\}$ and a constant $C' > 4\sqrt{2}$,
\begin{equation*}
\mathbb{P}\left\lbrace \max_{q\in \mathbb{Z}_M, q\ne 0} \left|\sum_{\ell\in A_{k_t}} e^{2\pi i \ell q/M}\right| < C'\log M \right\rbrace \ge 1 -  \frac{1}{M^{\frac{C'}{2\sqrt{2}} - 2}}.
\end{equation*}
\noindent In particular, $$\mathbb{P}\left(\max_{\substack{j_t, j_{t+1}\in \mathbb{Z}_M, \\ j_t\ne j_{t+1}}}\left|\sum_{\ell_t\in A_{k_t}} e^{\frac{2\pi i}{M}\ell_t (j_t - j_{t+1})}\right| < C'\log M\right) \ge 1 -  \dfrac{1}{M^{\frac{C'}{2\sqrt{2}} - 2}}.$$ By taking the union bound over all $t\in \{1,\dots, m\}$, we conclude that
\begin{equation*}
\mathbb{P}\left(\left|\prod_{t = 1}^m \sum_{\ell_t\in A_{k_t}} e^{\frac{2\pi i}{M}\ell_t (j_t - j_{t+1})}\right| < C'^m \log^m M\right) \ge {1-\dfrac{m}{M^{\frac{C'}{2\sqrt{2}}-2}}}.
\end{equation*}
Then, applying the triangular inequality to the trace formula, we obtain that, on an event $X$ of probability at least $1 -  \dfrac{m}{M^{\frac{C'}{2\sqrt{2}} - 2}}$,
\begin{equation*}
\begin{split}
\mathbb{E}\left(\Tr H^m \right) & \le \sum_{\substack{j_1, j_2, \dots, j_m\in \mathbb{Z}_M,\\j_1\ne j_2\ne\dots\ne j_m\ne j_{1}}}\sum_{k_1, k_2, \dots, k_m\in \mathbb{Z}_M} E_{\substack{j_1\dots j_m\\k_1\dots k_m}} \left|\prod_{t = 1}^m \sum_{\ell_t\in A_{k_t}} e^{\frac{2\pi i}{M}\ell_t (j_t - j_{t+1})}\right| \\
& < C'^m \log^m M \sum_{\substack{j_1, j_2, \dots, j_m\in \mathbb{Z}_M,\\j_1\ne j_2\ne\dots\ne j_m\ne j_{1}}}\sum_{k_1, k_2, \dots, k_m\in \mathbb{Z}_M} E_{\substack{j_1\dots j_m\\k_1\dots k_m}}
\end{split}
\end{equation*}

A permutation $\alpha\in \Sigma_m$ can be presented as a product
\begin{equation}\label{eq_cycle_decomp}
\alpha = (i_{11} i_{12}\dots i_{1 r_1})(i_{21} i_{22}\dots i_{2 r_2})\dots (i_{s1} i_{s2}\dots i_{s r_s})
\end{equation}
\noindent of disjoint cycles, where $r_1 + r_2 +\dots + r_s = m$, and, for each $p\in \{1,\dots , s\}$, $\alpha(i_{p q}) = i_{p (q+1)}$ for $q\in \{1,\dots, r_p - 1\}$ and $\alpha(i_{p r_p }) = i_{p 1}$.

Suppose that we have $k_1,\dots , k_m$ fixed. Then $E_{\substack{j_1\dots j_m\\k_1\dots k_m}}\ne 0$ if and only if there exists $\alpha\in \Sigma_m$, such that $j_t -  j_{\alpha(t)} = k_t - k_{\alpha(t) - 1}$, for all $t\in \{1,\dots,m\}$. 
Assuming that $\alpha$ has $s$ cycles in the disjoint cycle decomposition \eqref{eq_cycle_decomp}, this condition can be rewritten in the form of $s$ systems of linear equations for $j_1,\dots, j_m$. Namely, for each $p\in \{1,\dots , s\}$, we have 
\begin{align*}
j_{i_{p1}} -  j_{i_{p2}} & = k_{i_{p1}} - k_{i_{p2} - 1}\\
j_{i_{p2}} -  j_{i_{p3}} & = k_{i_{p2}} - k_{i_{p3} - 1}\\
& \cdots\\
j_{i_{pr_p}} -  j_{i_{p1}} & = k_{i_{pr_p}} - k_{i_{p1} - 1}.\numberthis\label{system_for_j}
\end{align*}  
Note that the system \eqref{system_for_j} has rank $r_p-1$. Furthermore, summing up all the equations, on the left hand side we obtain zero. So, \eqref{system_for_j} has $M$ different solutions~if
\begin{equation}\label{eq_restrict_k}
\sum_{q = 1}^{r_p}k_{i_{pq}} = \sum_{q = 1}^{r_p}k_{i_{pq} - 1},
\end{equation}
and does not have a solution otherwise. Moreover, if $s\ne 1$, that is, $r_p<m$, then the sets of indices $\{i_{pq}\}_{q = 1}^{r_p}$ on the left hand side of \eqref{eq_restrict_k} and $\{i_{pq} - 1\}_{q = 1}^{r_p}$ on the right hand side of \eqref{eq_restrict_k} are different. Indeed, suppose that $\{i_{pq}\}_{q = 1}^{r_p} = \{i_{pq} - 1\}_{q = 1}^{r_p}$, and let $i_{pq_0} = \min_{q \in \{1,\dots, r_p\}} i_{pq}$ be the smallest element in this set. Since $i_{pq_0} - 1$ is also an element of $\{i_{pq}\}_{q = 1}^{r_p}$, we have $i_{pq_0} - 1\ge i_{pq_0}$, which implies $i_{pq_0} = 1$ and $i_{pq_0} - 1= m$. Then, since $m\in \{i_{pq}\}_{q = 1}^{r_p}$, we also have  $m -1\in \{i_{pq}\}_{q = 1}^{r_p}$. Proceeding the argument by induction, we obtain $\{i_{pq}\}_{q = 1}^{r_p} = \{1, \dots, m\}$, which is a contradiction. Without loss of generality, we can assume that $i_{pr_p}\notin \{i_{pq} - 1\}_{q = 1}^{r_p}$, for every $p\in \{1,\dots,s\}$. 

It follows that, for each cycle in the cycle decomposition~\eqref{eq_cycle_decomp}, except the last one, equation~\eqref{eq_restrict_k} is a nontrivial linear relation for $k_t$, $t \in \{1,\dots, m\}$. For the last cycle the relation follows automatically, assuming \eqref{eq_restrict_k} is satisfied for each $p\in \{1,\dots, s-1\}$. So, for the system of linear equations for $j_1,\dots, j_m$ to have a solution, $k_{i_{pr_p}}$, $p\in \{1,\dots , s-1\}$, should be determined by $\{k_1,\dots, k_m\}\setminus \{k_{i_{pr_p}}\}_{p = 1}^{ s-1}$ using equations \eqref{eq_restrict_k}. It this case the number of different solutions is~$M^s$.

Then, for the expectation of the trace of $H^m$, on the event $X$ we have
\begin{equation*}
\begin{split}
\mathbb{E}\left(\Tr H^m \right) & < C'^m \log^m M \sum_{\substack{j_1, j_2, \dots, j_m\in \mathbb{Z}_M,\\j_1\ne j_2\ne\dots\ne j_m\ne j_{1}}}\sum_{k_1, k_2, \dots, k_m\in \mathbb{Z}_M} E_{\substack{j_1\dots j_m\\k_1\dots k_m}}\\
& \le C'^m \log^m M \sum_{s = 1}^m S(m,s)  \sum_{j_{i_{11}},\dots, j_{i_{s1}}\in \mathbb{Z}_M} \sum_{\substack{k_{i_{11}},\dots, k_{i_{1(r_1-1)}}\in \mathbb{Z}_M \\ ^{\vdots} \\ k_{i_{(s-1)1}},\dots, k_{i_{(s-1)(r_s-1)}}\in \mathbb{Z}_M \\ k_{i_{s1}},\dots, k_{i_{sr_s}}\in \mathbb{Z}_M}} \frac{1}{M^m}\\
& = C'^m\frac{\log^m M}{M^m} \sum_{s = 1}^m S(m,s) M^s M^{m-s+1} \\
& = C'^m M\log^m M \sum_{s = 1}^m S(m,s) = C'^m m! M\log^m M,
\end{split}
\end{equation*}
\noindent where $S(m, s)$ denotes the Stirling number of the first kind that is equal to the number of permutations in $\Sigma_m$ with exactly $s$ cycles in the disjoint cycle decomposition.

Moreover, the cardinality of $\Lambda$ is given by a sum of $M^2$ independent Bernoulli random variables with success probability $\tau = \frac{C\log M}{M^{\frac{m-1}{m}}}$. More precisely, $$|\Lambda| = \sum_{(k,\ell)\in \mathbb{Z}_M\times \mathbb{Z}_M}{\bf 1}_{\Lambda}(k,\ell).$$ Then Hoeffding's inequality (Lemma \ref{Hoeffging_ineq_Bernoulli}) applied with $t = \frac{C\log M}{2M^{\frac{m-1}{m}}}$ implies
\begin{equation*}
\mathbb{P}\left\lbrace |\Lambda|\le \frac{1}{2}CM^{1+ \frac{1}{m}}\log M \right\rbrace\le e^{-2C^2M^{\frac{2}{m}}\log^2 M}.
\end{equation*}
\noindent That is, $|\Lambda| > \frac{1}{2}CM^{1+ \frac{1}{m}}\log M$ on an event $Y$ of probability at least $1 - e^{-2C^2M^{\frac{2}{m}}\log^2 M}$.

Then, on the event $X\cap Y$, which has probability at least $1 - \frac{\tilde{C}m}{M^{\frac{C'}{2\sqrt{2}} - 2}}$, for some $\tilde{C}>0$, the obtained estimates for the trace expectation and frame set cardinality lead to the following probability bound for the singular values estimates.
\begin{equation*}
\begin{split}
\mathbb{P}\left\lbrace\frac{|\Lambda|}{M}(1-\delta) \le A_{(g, \Lambda)} \le B_{(g, \Lambda)}\le \frac{|\Lambda|}{M}(1+\delta)\right\rbrace \geq 1 - \frac{M^{m}}{|\Lambda|^{m}}\delta^{-m}\mathbb{E}(\Tr H^{m})\\
\geq 1 - C'^m m! \delta^{-m} \frac{M^{m}}{\frac{1}{2^m}C^m M^{m+1}\log^m M}M\log^m M = 1 - \left(\frac{2C'}{C}\right)^m m! \delta^{-m}.
\end{split}
\end{equation*}
This concludes the proof, provided $C$ is chosen to be large enough.
\end{proof}

\begin{remark}
We note that the bounds obtained in Theorem~\ref{th_sing_val_rand_lambda} show the same asymptotic behavior as the bounds on the extreme singular values of matrices with independent entries obtained in~\cite{latala2005some,vershynin2018high}. This observation suggests that, for most of the choices of the frame set $\Lambda$, random time-frequency structured matrices are nearly as well-conditioned, as random matrices with independent Gaussian entries.
\end{remark}

\section{Numerical robustness to erasures for Gabor and MUB frames}\label{sec: deterministic NERF}
In this section we focus on the signal reconstruction in the case when a portion of the frame coefficients is lost during the measurement or transmission process. To ensure stable reconstruction, we require the original frame $\Phi$ to be numerically erasure-robust in the sense of Definition~\ref{def: NERF}. In other words, we need to establish uniform bounds $A_{\Phi'}$ and $B_{\Phi'}$ for all subframes $\Phi' \subset \Phi$ of a given size. 

We start our discussion by considering a Gabor frame $(g,\Lambda)$ with an arbitrary $g \in \mathbb{S}^{M-1}$. 
Clearly, in this case $(g,\Lambda)$ is a unit-norm frame and we can use Proposition~\ref{pr:lowerboundsmallestsing} to obtain a uniform lower bound on $A_{(g, \Lambda)}$ in the case when $\Lambda$ is a subgroup of $\Z_M \times \Z_M$. Indeed, when $\Lambda$ is a subgroup, Lemma~\ref{lm:timeshiftfrequencyshiftcommuteuptophase} implies that the sequence $\left( d_{\Lambda}[j]\right)_{j = 1}^{\vert \Lambda \vert}$ consisting of the elements of the set $\left\lbrace \left\vert\langle \pi(\lambda) g, \pi(\mu) g\rangle\right\vert^2\right\rbrace_{\lambda \in \Lambda}$ sorted in decreasing order is identical for each $\mu \in \Lambda$. For $\Lambda' \subset \Lambda$, we follow the trace estimation idea from \cite{fickus2012numerically} and use Proposition~\ref{pr:lowerboundsmallestsing} to immediately obtain
    \begin{equation*}
        A_{(g, \Lambda')} \geq \frac{\vert\Lambda \vert}{M} - \frac{M - 1}{2M} - \frac{1}{2}\left(\frac{\vert \Lambda\vert ^2}{M} - |\Lambda'|\sum_{j = 1}^{|\Lambda|'} d_{\Lambda}[j]\right). 
    \end{equation*}
In particular, for a full Gabor frame with $\Lambda = \mathbb{Z}_M\times \mathbb{Z}_M$, this bound gives the following result.

\begin{corollary}\label{cor: Gabor NERF}
    For a fixed $p\in (0,1]$, the full Gabor frame $(g, \mathbb{Z}_M\times \mathbb{Z}_M)$ is $(p,C)$-numerically erasure-robust with $$C = \frac{M^{1/2}}{\left( M - \frac{M-1}{2M} - \frac{M^2}{2}(1-p)\left( (1-p)M - \sum_{j = 1}^{(1-p)M^2}d_{\mathbb{Z}_M \times \mathbb{Z}_M}(j)\right)  \right)^{1/2}}.$$
\end{corollary}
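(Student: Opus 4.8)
The plan is to verify Definition~\ref{def: NERF} directly, by producing — uniformly over every index set $J \subset \mathbb{Z}_M \times \mathbb{Z}_M$ with $|J| = (1-p)M^2$ — an upper bound on $B_{(g,J)}$ and a lower bound on $A_{(g,J)}$, and then taking $C$ to be the square root of their ratio. This suffices because, as recorded in Section~\ref{sec: frame theory background}, $\Cond(\Phi_J^*) = \sqrt{B_{(g,J)}}/\sqrt{A_{(g,J)}}$, so a uniform bound of the displayed form on this ratio is exactly the $(p,C)$-numerical erasure-robustness of $(g,\mathbb{Z}_M\times\mathbb{Z}_M)$.

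First I would dispose of the numerator. Applying Proposition~\ref{prop_sing_val_Gabor_regular} with $F = \mathbb{Z}_M$ (so that $F_m = \mathbb{Z}_M$ and $\Vert g_{F_m}\Vert_2 = \Vert g\Vert_2 = 1$ for every $m$) shows that the full Gabor frame $(g,\mathbb{Z}_M\times\mathbb{Z}_M)$ is tight with $A_{(g,\mathbb{Z}_M\times\mathbb{Z}_M)} = B_{(g,\mathbb{Z}_M\times\mathbb{Z}_M)} = M$. Deleting columns can only decrease the upper frame bound: for every $x\in\mathbb{S}^{M-1}$,
\begin{align*}
\sum_{\lambda\in J}|\langle x,\pi(\lambda)g\rangle|^2 \le \sum_{\lambda\in\mathbb{Z}_M\times\mathbb{Z}_M}|\langle x,\pi(\lambda)g\rangle|^2 = M,
\end{align*}
and maximizing over $x$ gives $B_{(g,J)}\le M$, i.e. $\sqrt{B_{(g,J)}}\le M^{1/2}$, which is the numerator of $C$.

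The substance of the proof is the uniform lower bound on $A_{(g,J)}$, which I would extract from the estimate displayed just before the corollary — that is, from Proposition~\ref{pr:lowerboundsmallestsing} together with the frame-potential identity~\eqref{eq: FP via Tr(H)^2} — applied with $\Lambda = \mathbb{Z}_M\times\mathbb{Z}_M$ (a group) and $\Lambda' = J$. The key point is why a \emph{single} decreasing sequence $d_{\mathbb{Z}_M\times\mathbb{Z}_M}$ controls \emph{every} $J$ at once. By Lemma~\ref{lm:timeshiftfrequencyshiftcommuteuptophase} one has $|\langle \pi(\lambda)g, \pi(\mu)g\rangle|^2 = |\langle \pi(\lambda-\mu)g, g\rangle|^2$, and as $\lambda$ ranges over the group so does $\lambda - \mu$; hence the multiset $\{|\langle \pi(\lambda)g, \pi(\mu)g\rangle|^2\}_{\lambda}$, and with it the decreasing rearrangement $d_{\mathbb{Z}_M\times\mathbb{Z}_M}$, is independent of the anchor $\mu$. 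Consequently, for any $J$ and any fixed $\mu \in J$, the sum $\sum_{\lambda \in J}|\langle \pi(\lambda)g, \pi(\mu)g\rangle|^2$ consists of $|J|$ entries of this common multiset and is thus at most the sum of its $|J|$ largest entries, $\sum_{j=1}^{(1-p)M^2} d_{\mathbb{Z}_M\times\mathbb{Z}_M}(j)$. Summing over the $(1-p)M^2$ choices of $\mu\in J$ gives the uniform bound $\mathrm{FP}((g,J)) \le |J|\sum_{j=1}^{(1-p)M^2} d_{\mathbb{Z}_M\times\mathbb{Z}_M}(j)$, and feeding this into Proposition~\ref{pr:lowerboundsmallestsing} (via $\mathrm{FP}=\tfrac{|J|^2}{M}+\Tr(H^2)$) produces exactly the expression appearing under the square root in the denominator of $C$.

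Combining the two bounds yields $\Cond(\Phi_J^*) = \sqrt{B_{(g,J)}}/\sqrt{A_{(g,J)}} \le C$ for every admissible $J$, which is the asserted NERF property. I expect the main obstacle to be precisely this uniformity step: showing that the worst case over all exponentially many erasure patterns $J$ is already captured by the single top-$(1-p)M^2$ partial sum of $d_{\mathbb{Z}_M\times\mathbb{Z}_M}$. It rests entirely on the homogeneity furnished by Lemma~\ref{lm:timeshiftfrequencyshiftcommuteuptophase} (the group structure of $\mathbb{Z}_M\times\mathbb{Z}_M$); without the $\mu$-independence of $d_{\mathbb{Z}_M\times\mathbb{Z}_M}$ the frame-potential bound, and hence $C$, would depend on which coefficients are erased, and no single constant could serve for all $J$.
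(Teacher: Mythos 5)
Your proposal is correct and follows essentially the same route as the paper: the paper's proof of Corollary~\ref{cor: Gabor NERF} is exactly the discussion preceding it, namely unit-normness of $(g,\Lambda)$, the $\mu$-independence of the decreasing rearrangement $d_{\Lambda}$ furnished by Lemma~\ref{lm:timeshiftfrequencyshiftcommuteuptophase} (the subgroup/homogeneity argument you spell out), the frame-potential bound $\mathrm{FP}((g,J)) \le |J|\sum_{j=1}^{|J|}d_{\Lambda}[j]$ fed into Proposition~\ref{pr:lowerboundsmallestsing}, and the monotonicity bound $B_{(g,J)}\le B_{(g,\Z_M\times\Z_M)}=M$ for the numerator. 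One caveat: your (correct) derivation yields $A_{(g,J)} \ge (1-p)M - \frac{M-1}{2M} - \frac{M^2}{2}(1-p)\bigl(\sum_{j=1}^{(1-p)M^2}d_{\Z_M\times\Z_M}(j) - (1-p)M\bigr)$, which agrees with the paper's displayed formula only up to what appear to be misprints there (the leading $M$ should be $(1-p)M$, and the bracket's sign is flipped), so your claim of an ``exact'' match should be read modulo those typos rather than as a defect in your argument.
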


Note that this bound is only meaningful if $$ M - \frac{M-1}{2M} - \frac{M^2}{2}(1-p)\left( (1-p)M - \sum_{j = 1}^{(1-p)M^2}d_{\mathbb{Z}_M \times \mathbb{Z}_M}(j)\right) > 0,$$ which depends on the values of $d_{\mathbb{Z}_M \times \mathbb{Z}_M}(j)$, and thus on the properties of the window $g$. A natural question therefore is if this bound can be further refined in the case when $g\in \mathbb{C}^M$ is a random, e.g. Steinhaus, window.

Unfortunately, the result of Corollary~\ref{cor: m2 Steinhaus general} is not strong enough to get a uniform robustness to erasures bound for the full Gabor frame even in the case of just one erasure. Indeed, in this case we have $\alpha = 1$ and $p = \frac{1}{M^2}$. The erased frame vector can be chosen in $M^2$ different ways, and thus taking the union bound over all the resulting subframes yields 
    \begin{align*}
        \mathbb{P} & \left(\frac{M^2 - 1}{M}(1-\delta) \le A_{(g, \Lambda)} \le B_{(g, \Lambda)}\le \frac{M^2 - 1}{M}(1+\delta) \text{ for all } \Lambda\subset \mathbb{Z}_M \times \mathbb{Z}_M, \vert \Lambda \vert = M^2 - 1 \right) \\ & \ge 1 - M^2 \frac{p}{\alpha(1 - p)}\frac{1}{\delta^2} = 1 - M^2 \frac{M^{-2}}{1 - M^{-2}} \frac{1}{\delta^2} = 1 - \frac{1}{1 - M^{-2}} \frac{1}{\delta^2}.
    \end{align*}
    Since $1 - \frac{1}{1 - M^{-2}} \frac{1}{\delta^2} < 0$, this bound is trivial. At the same time, we clearly have $$M - 1 \leq A_{(g, \Lambda)} \leq B_{(g, \Lambda)} \leq M \text{ for all } \vert \Lambda \vert = M^2 - 1.$$

\medskip

This observation suggests that further improvement of the robustness to erasures bound for Gabor frames obtained in Corollary~\ref{cor: Gabor NERF} requires development of new methods and approaches. We now turn our discussion to the analysis of the numerical robustness to erasures of a more general class of \emph{mutually unbiased bases frames} (MUBs). 

\begin{definition}[MUB frames]
    A frame $\Phi$ in $\mathbb{C}^M$ is said to be an $m$-MUB ($m$-mutually unbiased) frame if it is a union of $m$ orthonormal bases with coherence at most $\frac{1}{\sqrt{M}}$.
\end{definition}

This class is related to the Gabor frames in the following way. For prime ambient dimension $M$, there are known constructions of MUB frames as (deterministic) Gabor frames. 

\begin{theorem}[\cite{alltop1980complex}]
     Let $M \geq 5$ be prime and let $g_A$ with $g_A(j) = \frac{1}{\sqrt{M}}e^{2 \pi i j^3/ M}$ for $j \in \Z_M$ be an \emph{Alltop window}. Then the Gabor frame $(g_A, \Z_M \times \Z_M)$ is an $M$-MUB frame in $\C^M$. Moreover, the union $(g_A, \Z_M \times \Z_M)\cup \{e_j\}_{j = 1}^M$, where $\{e_j\}_{j = 1}^M$ is the standard orthonormal basis, is an $(M + 1)$-MUB, that is, a mutually unbiased frame of maximal cardinality. 
\end{theorem}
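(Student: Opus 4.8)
The plan is to exhibit an explicit partition of $(g_A, \Z_M \times \Z_M)$ into $M$ orthonormal bases and then compute all cross inner products to verify the coherence bound $\frac{1}{\sqrt{M}}$. First I would group the $M^2$ frame vectors by their time shift, setting $B_k := \{\pi(k,\ell)g_A : \ell \in \Z_M\}$ for each $k \in \Z_M$. Using that $|g_A(j)|^2 = \frac{1}{M}$ for all $j$, a direct computation gives, for fixed $k$,
\[
\langle \pi(k,\ell)g_A, \pi(k,\ell')g_A\rangle = \frac{1}{M}\sum_{j \in \Z_M} e^{2\pi i (\ell - \ell')j/M} = \delta_{\ell,\ell'},
\]
so each $B_k$ is an orthonormal basis of $\C^M$, and the full Gabor frame is the union $\bigcup_{k \in \Z_M} B_k$ of $M$ such bases.

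The heart of the argument is to control the coherence between vectors lying in distinct bases. For $k \neq k'$ one has
\[
\langle \pi(k,\ell)g_A, \pi(k',\ell')g_A\rangle = \frac{1}{M}\sum_{j \in \Z_M} e^{2\pi i \left[(\ell - \ell')j + (j-k)^3 - (j-k')^3\right]/M}.
\]
I would then expand the cubic difference: writing $d := k' - k \neq 0$, one finds
\[
(j-k)^3 - (j-k')^3 = d\left[3j^2 - 3(k+k')j + (k^2 + kk' + k'^2)\right],
\]
which is quadratic in $j$ with leading coefficient $3d$. Factoring out the $j$-independent phase reduces the inner product to a quadratic Gauss sum $\frac{1}{M}e^{2\pi i \theta/M}\sum_{j \in \Z_M} e^{2\pi i (3d\, j^2 + b\, j)/M}$ for suitable constants $\theta, b$ depending on $k, k', \ell, \ell'$.

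Next I would invoke the classical evaluation of quadratic Gauss sums: for $M$ odd and $\gcd(A, M) = 1$ one has $\left|\sum_{j \in \Z_M} e^{2\pi i (A j^2 + b j)/M}\right| = \sqrt{M}$. Here the hypothesis that $M \geq 5$ is prime is exactly what supplies the needed coprimality, since $3 \not\equiv 0 \pmod{M}$ (because $M \neq 3$) and $d \not\equiv 0 \pmod{M}$ together force $\gcd(3d, M) = 1$. This yields $\left|\langle \pi(k,\ell)g_A, \pi(k',\ell')g_A\rangle\right| = \frac{1}{M}\cdot\sqrt{M} = \frac{1}{\sqrt{M}}$, so the coherence equals $\frac{1}{\sqrt{M}}$ and $(g_A, \Z_M \times \Z_M)$ is an $M$-MUB frame. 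For the maximality claim, I would adjoin the standard basis $\{e_j\}_{j=1}^M$, which is trivially orthonormal, and note that $\left|\langle e_j, \pi(k,\ell)g_A\rangle\right| = |g_A(j-k)| = \frac{1}{\sqrt{M}}$, so it is unbiased with respect to every $B_k$; hence the union is an $(M+1)$-MUB of the maximal cardinality $M(M+1)$.

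I expect the only nontrivial point to be the Gauss sum step, and specifically the bookkeeping that turns the cubic phase of the Alltop window into a genuinely quadratic, nondegenerate Gauss sum. The cubic exponent is essential: it is precisely what makes the cross-correlation quadratic in $j$ with an invertible leading coefficient $3d$, a structure that degenerates for generic windows. I would take care to record explicitly where primality and $M \geq 5$ enter, namely to ensure that $3d$ is a unit modulo $M$, rather than leaving this implicit in the Gauss sum magnitude.
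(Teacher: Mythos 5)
Your proof is correct, and there is nothing in the paper to compare it against: the theorem is imported from Alltop's 1980 paper without proof, and the classical argument is exactly the one you give --- grouping the time--frequency shifts by time shift into $M$ orthonormal bases and reducing each cross-basis inner product to a nondegenerate quadratic Gauss sum, with primality and $M \geq 5$ guaranteeing that the leading coefficient $3d$ is a unit modulo $M$. The only assertion you leave unproved is that $M+1$ is the maximal possible number of mutually unbiased bases in $\C^M$ (justifying ``maximal cardinality''); this is a classical fact, but strictly speaking it needs its own citation or argument if the parenthetical claim is taken as part of the statement.
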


\subsection{Robustness to erasures of MUB Frames}
In this section, we significantly improve the numerical robustness to erasures result for MUB frames obtained in \cite{fickus2012numerically} and show that MUB frames are robust to up to 50$\%$ erasures. This is comparable with the strongest guarantees of this kind obtained in~\cite{fickus2012numerically} for equiangular tight frames, see Theorem~\ref{th:ETF}.
The result for MUB frames from \cite{fickus2012numerically} is given by the following theorem. Roughly, it states that when the size of the MUB frame is $M^2$, one can afford to lose $O(M)$ frame coefficients. 

\begin{theorem}[{\cite[Theorem~6]{fickus2012numerically}}]
    Let $\Phi$ be an $M$-MUB frame. Then $\Phi$ is a $(p,C)$-numerical erasure-robust frame for any $p \leq \frac{(C^2 - 1)^2}{(C^2 + 1)(M + 1)}$.  
\end{theorem}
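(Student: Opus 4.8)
The plan is to establish the uniform condition-number bound required by Definition~\ref{def: NERF} by exploiting two structural features of an $M$-MUB frame $\Phi$: it has exactly $N = M^2$ vectors, and, being a union of $M$ orthonormal bases, it is a tight frame with $\Phi\Phi^\ast = M I_M$. Tightness is what makes the upper bound free: for every index set $J$ the synthesis matrix satisfies $\Phi_J\Phi_J^\ast \preceq \Phi\Phi^\ast = M I_M$, so $B_{\Phi_J} \le M$ uniformly. Equivalently, $\Phi_J\Phi_J^\ast = M I_M - \Phi_{J^c}\Phi_{J^c}^\ast$, which yields the exact identity $A_{\Phi_J} = M - B_{\Phi_{J^c}}$; thus the lower frame bound of a surviving subframe is controlled by the upper frame bound of the comparatively small erased family $\Phi_{J^c}$, which has only $|J^c| = pN = pM^2$ vectors. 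Either viewpoint shows that the crux of the argument is a uniform lower bound on $A_{\Phi_J}$.

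I would obtain this lower bound through Proposition~\ref{pr:lowerboundsmallestsing}, which applies because the MUB frame is unit-norm: $A_{\Phi_J} \ge \tfrac{|J|}{M} - \tfrac{M-1}{2M} - \tfrac12 \Tr(H_{\Phi_J}^2)$. By equation~\eqref{eq: FP via Tr(H)^2} the only frame-dependent quantity is $\Tr(H_{\Phi_J}^2) = \mathrm{FP}(\Phi_J) - \tfrac{|J|^2}{M}$, and here the MUB structure enters decisively: inner products of vectors in the same basis vanish, while inner products of vectors from different bases have squared modulus at most $\mu^2 \le \tfrac1M$. Bounding the number of cross-basis pairs inside $J$ by its worst case (an even split across the $M$ bases) then gives an estimate of the form $\Tr(H_{\Phi_J}^2) \le p(1-p)M^2$ that depends only on $|J|$ and not on which vectors are retained. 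This cardinality-only dependence is precisely what upgrades the estimate to a bound that is uniform over all subframes of the given size.

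With both ingredients in hand I would impose the requirement $\Cond(\Phi_J^\ast)^2 = B_{\Phi_J}/A_{\Phi_J} \le C^2$, substitute $B_{\Phi_J} \le M$ together with the lower bound on $A_{\Phi_J}$, and solve the resulting (quadratic in $p$) inequality for the erasure rate. Feeding in the exact MUB parameters $\mu^2 = 1/M$ and $N = M^2$ is what produces the stated threshold $p \le \tfrac{(C^2-1)^2}{(C^2+1)(M+1)}$; the factor $(M+1)$ is the arithmetic fingerprint of the MUB coherence $1/\sqrt M$ measured against a full frame of $M^2$ vectors.

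The main obstacle is not any single inequality but the tension between uniformity and sharpness. Uniformity over the $\binom{N}{pN}$ subframes would be hopeless in general; it is tractable here only because every step is driven by the worst-case coherence and by cardinality, never by the identity of the erased indices. The price of this convenience is that the generic coherence value $\mu^2 = 1/M$ sits strictly above the Welch-optimal value $\tfrac{1}{M+1}$ attained by an equiangular tight frame, and it is this gap — amplified by the lossy additive, unnormalized term $-\tfrac12\Tr(H_{\Phi_J}^2)$ in Proposition~\ref{pr:lowerboundsmallestsing} — that drives the tolerable erasure fraction down to order $1/M$. Recovering a constant-fraction (ETF-type) guarantee instead requires replacing this step by the normalized trace estimate of Lemma~\ref{lem: trace} and accounting for the orthonormal-basis structure more carefully, which is exactly the improvement carried out in Theorem~\ref{th:MUBGuarantee}.
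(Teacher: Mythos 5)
Your structural ingredients are individually sound, and in fact they reproduce the paper's own frame-potential computation: an $M$-MUB frame is tight with $\Phi\Phi^* = MI_M$, so $B_{\Phi_J} \le M$ for every $J$; Proposition~\ref{pr:lowerboundsmallestsing} applies to the unit-norm subframe; and your counting (zero inner products within a basis, squared inner products at most $1/M$ across bases, Cauchy--Schwarz making the even split the worst case) gives exactly $\Tr(H_{\Phi_J}^2) \le J - \frac{J^2}{M^2} = p(1-p)M^2$, which is the same estimate as in the paper's proof of the stronger Theorem~\ref{th:MUBGuarantee}. (Note the paper never proves the quoted statement itself --- it is cited from Fickus--Mixon --- and the paper's own MUB argument runs through the \emph{normalized} trace estimate of Lemma~\ref{lem: trace}, not through Proposition~\ref{pr:lowerboundsmallestsing}.)

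The gap is in your final step: the inequality you propose to solve does not produce the stated threshold, and cannot. Combining your two bounds, $(p,C)$-robustness requires
\begin{equation*}
(1-p)M - \frac{M-1}{2M} - \frac{1}{2}\,p(1-p)M^2 \;\ge\; \frac{M}{C^2},
\end{equation*}
and since the trace term enters additively with weight $1$ (rather than the weight $M^2/J^2$ it carries in Lemma~\ref{lem: trace}), positivity of the left-hand side alone already forces $p < \frac{2}{M}$ \emph{for every} $C$, including $C = \infty$; solving more carefully gives only $p \lesssim \frac{2(C^2-1)}{C^2(M+2)}$, whose dependence on $C$ is bounded. The stated threshold $\frac{(C^2-1)^2}{(C^2+1)(M+1)}$ grows like $\frac{C^2}{M+1}$ and exceeds $\frac{2}{M}$ as soon as $C^2 > 2+\sqrt{5}$ (for large $M$). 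Concretely, take $M = 100$, $C^2 = 10$: the statement asserts robustness up to $p^* = 81/1111 \approx 0.073$, and it is genuinely true there (it follows from Theorem~\ref{th:MUBGuarantee}, since $0.073 < 0.401$), yet already at $p = 0.05$ your lower bound reads $95 - 0.495 - 237.5 < 0$ and is vacuous. The numerator $(C^2-1)^2$ against a single factor $(C^2+1)$ is the fingerprint of a two-sided normalized condition of the form $\delta_{\Phi_J} \le \frac{C^2-1}{C^2+1}$; it cannot emerge from the pairing ``unnormalized additive lower bound on $A$ plus $B \le M$.'' Your own closing paragraph contains the repair: feed your (correct) estimate $\Tr(H_{\Phi_J}^2) \le p(1-p)M^2$ into Corollary~\ref{cr:fpnerf} instead, which gives $\delta_{\Phi_J}^2 \le \frac{p}{1-p}$ and hence the threshold $\frac{(C^2-1)^2}{(C^2-1)^2+(C^2+1)^2}$ of Theorem~\ref{th:MUBGuarantee}; this implies the quoted statement whenever $(C^2+1)(M+1) \ge (C^2-1)^2 + (C^2+1)^2$, i.e.\ in every regime where the quoted threshold does not exceed the $50\%$-type bound, whereas your route as written proves a strictly weaker theorem.
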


We refine the proof technique of~\cite[Theorem~5]{fickus2012numerically} and obtain the following stronger result, which is similar to Theorem~\ref{th:ETF} for equiangular tight frames. 
\begin{theorem}\label{th:MUBGuarantee}
    Let $\Phi$ be an $m$-MUB frame with $m = \alpha M$. Then $\Phi$ is a $(p,C)$-numerically erasure robust frame for any $p \leq \frac{\alpha (C^2 - 1)^2}{\alpha(C^2 - 1)^2 + (C^2 + 1)^2}$.
    \begin{proof}
    Let $\mathcal{J} \subset \{1,\dots,mM\}$ be of size $J = (1 - p)mM = \alpha(1-p)M^2$ and let $\Phi_{\mathcal{J}}$ be the associated subframe. As $\Phi$ is an $m$-MUB, we may write $\Phi = \{\varphi_j^{(i)} : 1 \leq j \leq M, 1 \leq i \leq m\}$, where for each $i$, $\{\varphi_j^{(i)}\}_{j = 1}^M$ is an orthonormal basis. Let us define $A_i = \{j | \varphi_j^{(i)} \in \Phi_{\mathcal{J}}\}$. Clearly, we have that $\sum_{i = 1}^m |A_i| = J$. By definition of an $m$-MUB frame, the inner products $$|\langle \varphi_j^{(i)} , \varphi_{\tilde{j}}^{(\tilde{i})}\rangle|^2 = \begin{cases}
        0, & i = \tilde{i}, j\neq\tilde{j};\\
        1, & i = \tilde{i}, j = \tilde{j};\\
        \frac{1}{M}, & i \neq \tilde{i}.
    \end{cases}$$ 
    The idea is to count how many of each of these instances occure in the frame potential $\mathrm{FP}(\Phi_{\mathcal{J}})$. It is clear that the value $1$ is taken on precisely $J$ times. We observe that the value $0$ occurs when we have the inner product of two different vectors belonging to the same $A_{i}$. Thus, the total number of zero summands in the formula for the frame potential is given by 
    \begin{align*}
        \sum_{i = 1}^m |A_i|\left(|A_i| - 1\right) = \sum_{i = 1}^m |A_i|^2 - J.
    \end{align*} The occurrences of $\frac{1}{M}$ are therefore given by 
    \begin{align*}
    J^2 - \left( \sum_{i = 1}^m |A_i|^2 - J\right) - J = J^2 -\sum_{i = 1}^m |A_i|^2.
    \end{align*} Thus,

    \begin{align}\label{eq:tracealltop}
        \mathrm{FP}(\Phi_{\mathcal{J}}) - \frac{J^2}{M} &= \frac{J^2}{M} - \frac{1}{M} \sum_{i = 1}^m |A_i|^2 + J - \frac{J^2}{M}
        = J - \frac{1}{M} \sum_{i = 1}^m |A_i|^2. 
    \end{align}
    Using the Cauchy-Schwarz inequality, we see that $\frac{1}{mM} J^2 \leq \frac{1}{M} \sum_{i = 1}^m |A_i|^2$. It follows that 

    \begin{align*}
        \mathrm{FP}(\Phi_{\mathcal{J}}) - \frac{J^2}{M} \leq  J - \frac{1}{mM}J^2 
    \end{align*}
    Consequently,

    \begin{align*}
        \delta_{\mathcal{J}}^2 \leq \frac{M^2}{J^2} \left(J - \frac{1}{mM}J^2\right) = \frac{M^2}{J}\left(1 - \frac{1}{mM}J\right),
    \end{align*}
    where $\delta_{\mathcal{J}} := \delta_{\Phi_{\mathcal{J}}}$ as in the proof of Lemma \ref{lem: trace}. Now we use that $J = (1-p)mM = \alpha (1 - p)M^2$ to obtain
\begin{align}\label{eq:deltaest}
        \delta_{\mathcal{J}}^2 \leq \frac{p}{\alpha(1 - p)}.
\end{align}

    By the theorem assumption, we have that $p \leq \frac{\alpha(C^2 - 1)^2}{\alpha (C^2 - 1)^2 + (C^2 + 1)^2}$. Substituting this into~\eqref{eq:deltaest} yields
    $\delta_{\mathcal{J}}^2 \leq \frac{(C^2 - 1)^2}{(C^2+ 1)^2}$, implying $\mathrm{Cond} (\Phi^*_{\mathcal{J}}) \leq C$.  
    \end{proof}
\end{theorem}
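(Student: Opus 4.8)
The plan is to control the condition number of every subframe uniformly by bounding its frame-potential deviation and then invoking Corollary~\ref{cr:fpnerf} (equivalently Lemma~\ref{lem: trace} with $m=1$). Since each vector of an $m$-MUB frame belongs to an orthonormal basis, it has unit norm, so every subframe $\Phi_{\mathcal{J}}$ is a unit-norm frame and the corollary applies. Concretely, if $\frac{M^2}{J^2}(\mathrm{FP}(\Phi_{\mathcal{J}}) - \frac{J^2}{M}) \le \delta^2$, then all singular values of $\Phi_{\mathcal{J}}$ lie in $[(1-\delta)\frac{J}{M}, (1+\delta)\frac{J}{M}]$, so $\Cond(\Phi_{\mathcal{J}}^*) = \sqrt{B_{\Phi_{\mathcal{J}}}/A_{\Phi_{\mathcal{J}}}} \le \sqrt{(1+\delta)/(1-\delta)}$. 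As $\sqrt{(1+\delta)/(1-\delta)} \le C$ is equivalent to $\delta \le \frac{C^2-1}{C^2+1}$, it suffices to establish a bound $\delta_{\mathcal{J}}^2 \le \frac{p}{\alpha(1-p)}$ valid for every admissible $\mathcal{J}$ and then to verify that the hypothesis on $p$ forces $\frac{p}{\alpha(1-p)} \le (\frac{C^2-1}{C^2+1})^2$.

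Next I would exploit the MUB structure to compute the frame potential. Writing $\Phi = \{\varphi_j^{(i)} : 1 \le j \le M,\ 1 \le i \le m\}$ as a union of $m$ orthonormal bases and fixing $\mathcal{J}$ with $|\mathcal{J}| = J = (1-p)mM$, I set $A_i = \{j : \varphi_j^{(i)} \in \Phi_{\mathcal{J}}\}$, so that $\sum_{i=1}^m |A_i| = J$. The inner products entering $\mathrm{FP}(\Phi_{\mathcal{J}})$ take only the values $1$ (a vector with itself), $0$ (distinct vectors in the same basis), and $\frac{1}{M}$ (vectors from different bases), directly from the definition of an $m$-MUB frame. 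Counting how often each value occurs gives $\mathrm{FP}(\Phi_{\mathcal{J}}) = J + \frac{1}{M}(J^2 - \sum_{i=1}^m |A_i|^2)$, whence $\mathrm{FP}(\Phi_{\mathcal{J}}) - \frac{J^2}{M} = J - \frac{1}{M}\sum_{i=1}^m |A_i|^2$.

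The crucial step is to make this deviation uniform over $\mathcal{J}$. Since the quantity decreases as $\sum_i |A_i|^2$ grows, the worst erasure pattern is the one spreading the retained vectors as evenly as possible across the $m$ bases; by Cauchy-Schwarz this is captured exactly, as $\sum_{i=1}^m |A_i|^2 \ge \frac{1}{m}(\sum_i |A_i|)^2 = \frac{J^2}{m}$, giving $\mathrm{FP}(\Phi_{\mathcal{J}}) - \frac{J^2}{M} \le J - \frac{J^2}{mM}$ for every $\mathcal{J}$. Feeding this into Corollary~\ref{cr:fpnerf} yields $\delta_{\mathcal{J}}^2 \le \frac{M^2}{J}(1 - \frac{J}{mM})$, and substituting $J = \alpha(1-p)M^2$ together with $m = \alpha M$ collapses the right-hand side to $\frac{p}{\alpha(1-p)}$. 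Finally, the assumption $p \le \frac{\alpha(C^2-1)^2}{\alpha(C^2-1)^2 + (C^2+1)^2}$ rearranges exactly into $\frac{p}{\alpha(1-p)} \le (\frac{C^2-1}{C^2+1})^2$, so $\delta_{\mathcal{J}} \le \frac{C^2-1}{C^2+1}$ and $\Cond(\Phi_{\mathcal{J}}^*) \le C$ uniformly.

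I expect the main obstacle to lie not in any single computation but in two points of care: ensuring the enumeration of inner-product types is exhaustive (so that the three-term count for $\mathrm{FP}(\Phi_{\mathcal{J}})$ is exact), and recognizing that the Cauchy-Schwarz bound genuinely captures the worst case over all subsets $\mathcal{J}$ of the prescribed size, turning an expectation-style estimate into a guarantee uniform over erasure patterns. It is precisely this tight even-spread bound—replacing the cruder estimate of~\cite{fickus2012numerically}—that upgrades their $O(1/M)$ erasure rate to the $\alpha$-dependent $50\%$ threshold.
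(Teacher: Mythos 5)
Your proposal is correct and follows essentially the same route as the paper's proof: decompose the MUB into $m$ orthonormal bases, count the three possible inner-product values in $\mathrm{FP}(\Phi_{\mathcal{J}})$, apply Cauchy--Schwarz to $\sum_i |A_i|^2$ to get $\delta_{\mathcal{J}}^2 \leq \frac{p}{\alpha(1-p)}$, and translate the hypothesis on $p$ into $\mathrm{Cond}(\Phi_{\mathcal{J}}^*) \leq C$ via Corollary~\ref{cr:fpnerf}. The only (welcome) addition is that you make explicit the equivalence $\sqrt{(1+\delta)/(1-\delta)} \leq C \iff \delta \leq \frac{C^2-1}{C^2+1}$, which the paper leaves implicit in its final step.
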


Unlike the constructions of maximal equiangular tight frames, constructions for maximal MUB frames are known in many ambient dimensions (including powers of prime numbers). Note that for a maximal MUB frame we can take $\alpha = 1 + \frac{1}{M}$. If we let $C \to \infty$, we see that we can provide a robustness guarantee of up $\frac{M + 1}{2M + 1} > \frac{1}{2}$ erasures, thereby making a small step towards breaking the ``one-half barrier'' described in~\cite{fickus2012numerically}. Maximal ETFs would also achieve the same guarantee. 

\begin{remark}
    Observe that the expected value of the trace of $H_\Lambda^2$ obtained in Proposition~\ref{pr:tracesteinhaus} is precisely the expression for the trace when $g$ is an Alltop window obtained by \eqref{eq:tracealltop} and Corollary~\ref{cr:fpnerf}. This leads to the belief that Gabor frames with a Steinhaus window are also numerically robust against erasures. 
\end{remark}

\section{Numerical results and further discussion}\label{sec_num_res_sing_val}

In this section, we aim to numerically analyze the obtained theoretical guarantees for frame bounds and the NERF property for Gabor frames. In particular, we discuss Theorem \ref{th_sing_val_rand_lambda} and Theorem \ref{th:MUBGuarantee}. 

\smallskip

Recall that Theorem \ref{th_sing_val_rand_lambda} gives estimates of the frame bounds of a Gabor frame $(g,\Lambda)$ with a Steinhaus window $g$ and $\Lambda$ being a random subset of $\Z_M \times \Z_M$. 
Let us fix an even $m\in\mathbb{N}$, and let $C>0$ be a sufficiently large constant depending on $m$. Consider a random subset $\Lambda\subset \mathbb{Z}_M\times\mathbb{Z}_M$, such that the events $\{(k,\ell)\in \Lambda\}$ are independent for all $(k,\ell)\in \mathbb{Z}_M\times \mathbb{Z}_M$ and have probability ${\tau = \frac{C\log M}{M^{\frac{m-1}{m}}}}$. Theorem~\ref{th_sing_val_rand_lambda} ensures that
\begin{equation*}
\mathbb{P}\left\lbrace\frac{|\Lambda|}{M}(1-\delta) \le A_{(g, \Lambda)} \le B_{(g, \Lambda)}\le \frac{|\Lambda|}{M}(1+\delta)\right\rbrace\ge 1 - \varepsilon,
\end{equation*}
\noindent where $\varepsilon\in (0,1)$ depends on $m$, $\delta$, and the choice of $C$. To illustrate Theorem \ref{th_sing_val_rand_lambda}, we use two sets of numerical simulations. 

\begin{figure}[t]\center
\begin{tabular}{cc}
\includegraphics[width=85mm]{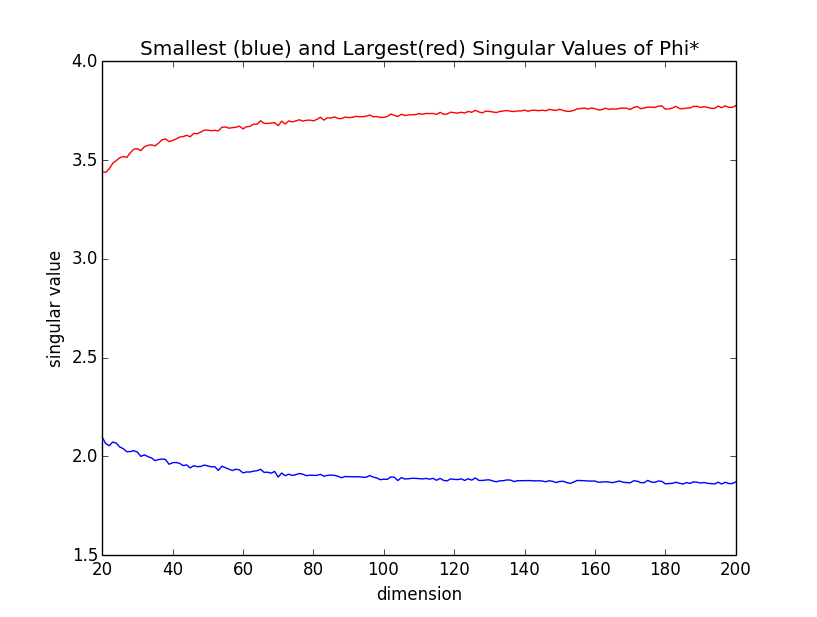}
&
\includegraphics[width=85mm]{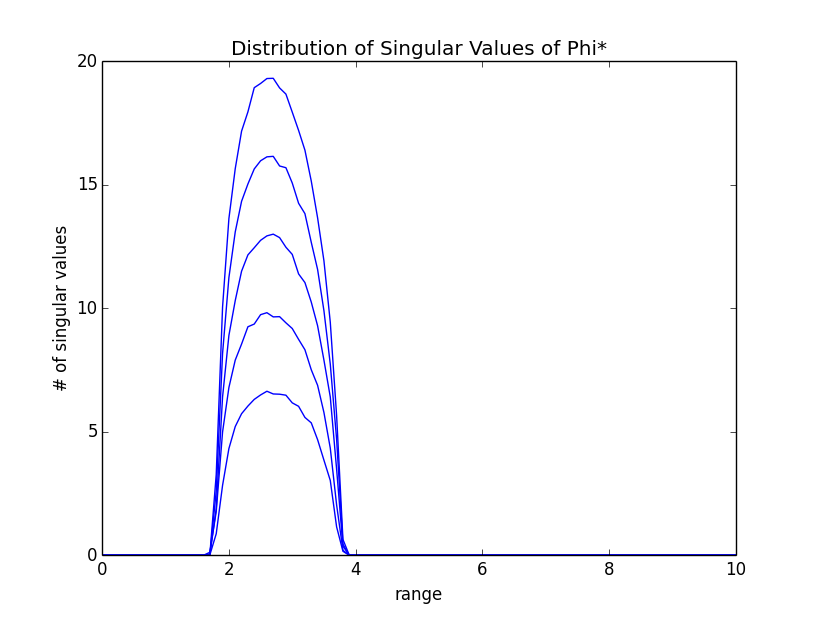}
\end{tabular}
\caption{\label{num_sing_val_distr} Left: the dependence of the upper and lower frame bounds of a Gabor frame $(g, \Lambda)$ on the ambient dimension $M$; Right: the distribution of the singular values of the analysis matrix of $(g, \Lambda)$ for  $M = 100, 150, 200, 250, 300$. On both figures, $g$ is a Steinhaus window and $\Lambda$ is chosen at random as described in Theorem \ref{th_sing_val_rand_lambda}, with $\tau = \frac{C}{M}$, that is, $|\Lambda| = O(M)$ with high probability. The plots are obtained by averaging over 1000 randomly generated frames.}
\end{figure}

In the first set of numerical simulations, we investigate the behavior of the singular values of the analysis matrix of a Gabor frame $(g, \Lambda)$ with a Steinhaus window $g$ and set $\Lambda\subset \mathbb{Z}_M\times\mathbb{Z}_M$ selected at random, so that $|\Lambda| = O(M)$ with high probability. The obtained numerical results suggest that, in the case when random $\Lambda$ is constructed as described in Theorem \ref{th_sing_val_rand_lambda} with $\tau = \frac{C}{M}$, there exist constants $0<k<K$ not depending on the ambient dimension $M$, such that all the singular values of the analysis matrix $\Phi_{\Lambda}^*$ are inside the interval $\left[k\frac{|\Lambda|}{M}, K\frac{|\Lambda|}{M}\right]$ with high probability, see Figure \ref{num_sing_val_distr} (left). This allows us to conjecture that a version of Theorem~\ref{th_sing_val_rand_lambda} is true also for~$\Lambda$ with $|\Lambda| = O(M)$, and the additional factor of $M^\epsilon \log M$ in the cardinality of $\Lambda$ is a side effect of the method used to prove the theorem. 
The right-hand side of Figure \ref{num_sing_val_distr} shows the distribution of the singular values of $\Phi_{\Lambda}^*$ over this interval for the selected dimensions $M = 100,~150,~200,~250,~300$. 

\medskip

\begin{figure}[t]\center
\begin{tabular}{cc}
\includegraphics[width=83mm]{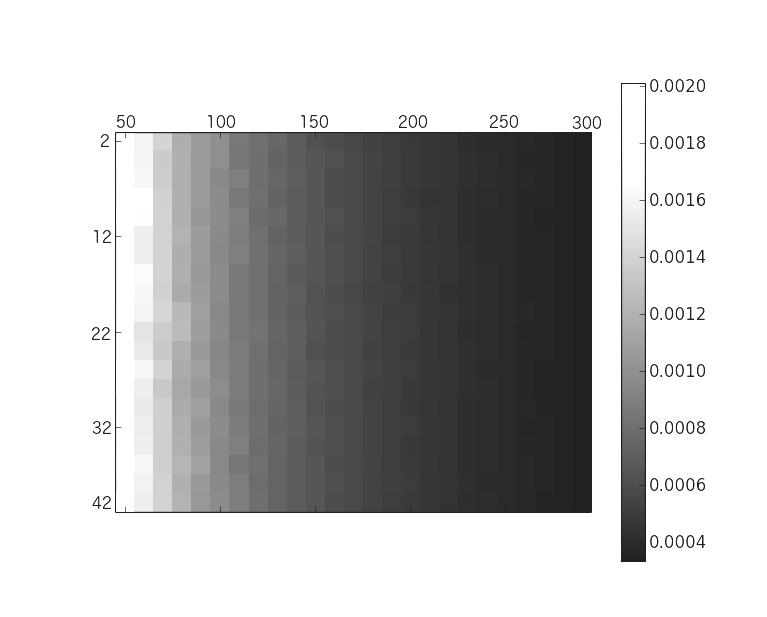}
&
\includegraphics[width=83mm]{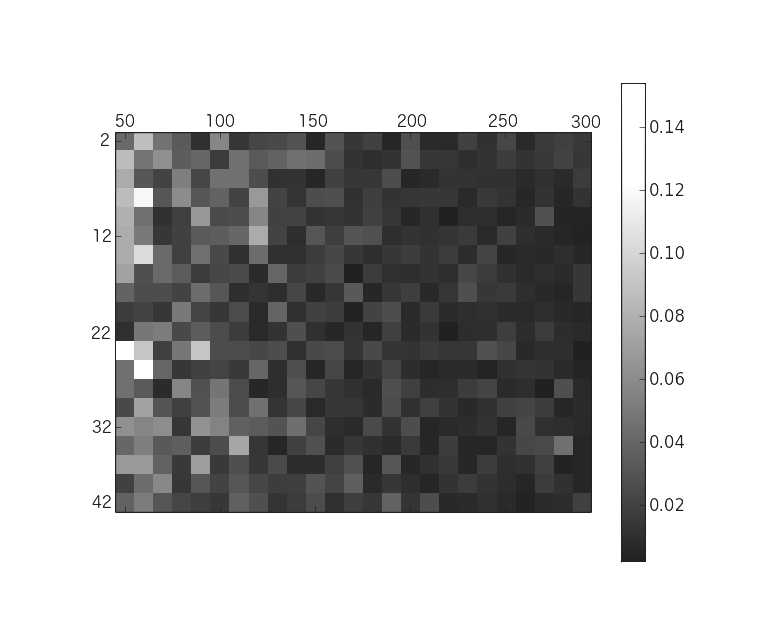}
\end{tabular}
\caption{\label{num_sing_val_trace_est} The dependence of the numerically estimated normalized trace expectation $\frac{M^{m}}{|\Lambda|^{m}}\mathbb{E}\left(\Tr \left(\Phi_\Lambda \Phi_\Lambda ^*- \frac{|\Lambda|}{M}I_M \right)^{m}\right)$ on the ambient dimension $M$ (horizontal axis) and the parameter $C$ (vertical axis), for~$m = 4$. Here, $\Phi_\Lambda$ is the synthesis matrix of a Gabor frame $(g, \Lambda)$ with a Steinhaus window $g$. Left: $\Lambda$ is chosen at random, as described in Theorem~\ref{th_sing_val_rand_lambda}, with $\tau = \frac{C}{M}$. Right: ${\Lambda = F\times \{0, 1, \dots, \lfloor\frac{M}{2}\rfloor\}}$ with $\vert F \vert = 2C$. 
}
\end{figure}

We use the second set of simulations to investigate the behavior of the trace of the matrix $H = \Phi_\Lambda \Phi_\Lambda ^*- \frac{|\Lambda|}{M}I_M$, where $\Phi_\Lambda$ is the synthesis matrix of a Gabor frame $(g, \Lambda)$ with a Steinhaus window $g$. It follows from Lemma~\ref{lemma_trace_formula} that $${\mathbb{P}\left\lbrace A_{(g, \Lambda)} \le \frac{|\Lambda|}{M}(1-\delta)  \text{ or } B_{(g, \Lambda)}\ge \frac{|\Lambda|}{M}(1+\delta)\right\rbrace \leq \frac{M^{2m}}{|\Lambda|^{2m}}\delta^{-2m}\mathbb{E}(\Tr H^{2m})}.$$ In other words, the normalized trace expectation $\frac{M^{m}}{|\Lambda|^{m}}\mathbb{E}\left(\Tr H^{m}\right)$ is used to estimate the probability of the ``failure'' event on which either the lower frame bound of $(g, \Lambda)$ is too small or its upper frame bound is too large, meaning that the frame $(g, \Lambda)$ is not well-conditioned.

For the normalized trace expectation, we consider two different constructions of~$\Lambda$, providing the average and the ``worst-case'' estimates, respectively. The left-hand side of Figure~\ref{num_sing_val_trace_est} shows the numerical results in the case when $\Lambda$ is chosen at random, as described in Theorem~\ref{th_sing_val_rand_lambda} with $\tau = \frac{C}{M}$. The right-hand side of Figure~\ref{num_sing_val_trace_est} illustrates the case when $\Lambda$ is of the form $\Lambda = F\times \{0, 1, \dots, \lfloor\frac{M}{2}\rfloor\}$, $F\subset\mathbb{Z}_M$. Indeed, following~\eqref{eq: trace expression}, we see that
\begin{equation*}
\mathbb{E}\left(\Tr H^m \right) = \sum_{\substack{j_1, j_2, \dots, j_m\in \mathbb{Z}_M,\\j_1\ne j_2\ne\dots\ne j_m\ne j_{1}}}\sum_{k_1, k_2, \dots, k_m\in \mathbb{Z}_M} E_{\substack{j_1\dots j_m\\k_1\dots k_m}} \prod_{t = 1}^m \sum_{\ell_t\in A_{k_t}} e^{\frac{2\pi i}{M}\ell_t (j_t - j_{t+1})},
\end{equation*}
where $E_{\substack{j_1\dots j_m\\k_1\dots k_m}}\in\left\lbrace 0,\frac{1}{M^m}\right\rbrace$ and $A_{k} = \{\ell\in \mathbb{Z}_M ~\colon~ (k,\ell)\in \Lambda\}$.
To maximize the expected trace, one needs to select $\Lambda$ of the given cardinality $CM$ in a way that maximizes the values of $\sum_{\ell\in A_{k}} e^{\frac{2\pi i}{M}\ell j}$. The choice $\Lambda = F\times \{0, 1, \dots, \lfloor\frac{M}{2}\rfloor\}$ implies that $A_k = \{0, 1, \dots, \lfloor\frac{M}{2}\rfloor\}$ for all $k$ and thus ensures that the summands in the sum are localized.

For each of the constructions of $\Lambda$, Figure~\ref{num_sing_val_trace_est} shows the dependence of the normalized trace expectation on the ambient dimension $M$ (horizontal axis) and the parameter $C$ (vertical axis), for~$m = 4$. The obtained numerical results suggest that, in both cases, the normalized trace expectation decreases rapidly with the dimension. This allows us to conjecture that the probability bound obtained in Theorem~\ref{th_sing_val_rand_lambda} can be further improved and extended to smaller $\vert \Lambda \vert$. Moreover, Figure \ref{num_sing_val_trace_est} (left) shows that, in the case of randomly selected $\Lambda$, the normalized trace expectation does not seem to depend on the parameter $C$.

\subsection{Erasure-robust frames}

We now turn our attention to the numerical investigation of the robustness to erasures of Gabor frames~$(g, \Lambda)$ with a random window $g$, as well as  mutually unbiased bases frames.

We note that, for any $\Lambda'\subset\Lambda$, 
\begin{equation*}
B_{(g, \Lambda')} = \max_{x\in \mathbb{S}^{M-1}}\sum_{\lambda\in \Lambda'}|\langle x, \pi(\lambda)g\rangle|^2\le \max_{x\in \mathbb{S}^{M-1}}\sum_{\lambda\in \Lambda}|\langle x, \pi(\lambda)g\rangle|^2 =B_{(g, \Lambda)},
\end{equation*}
\noindent and, in particular, for any $\Lambda\subset \Z_M\times\mathbb{Z}_M$ and $g\in \mathbb{S}^{M-1}$, $B_{(g, \Lambda)} \leq M$.
Thus, we concentrate on uniformly bounding the lower frame bound  $A_{(g, \Lambda')}$, for all subframes $(g, \Lambda')$ of $(g, \Lambda)$ with $|\Lambda'| \ge (1 - p)|\Lambda|$, where $p$ is a fixed portion of erasures.

\begin{figure}[t]\center
\includegraphics[width=115mm]{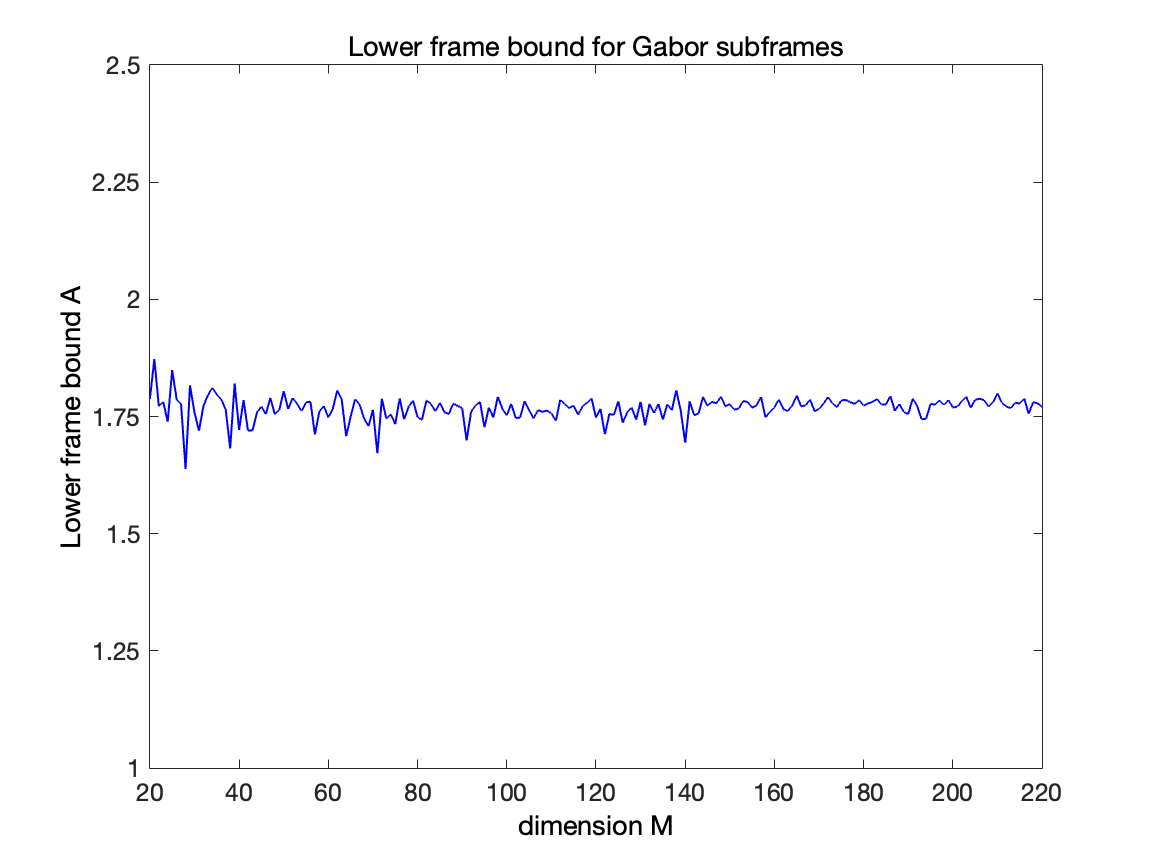}
\caption{\label{num_singular} The dependence of the numerically estimated ${\Delta\left(\frac{1}{3}\right)=\min\left\lbrace A_{(g,\Lambda')}:~ \Lambda'\subset \Lambda,~ |\Lambda'|\ge \frac{2}{3}|\Lambda|\right\rbrace}$ on the ambient dimension $M$. Here, $g\sim\text{Unif.}(\mathbb{S}^{M-1})$, and ${\Lambda = F\times\mathbb{Z}_M}$ with $\vert F\vert = \text{const}$ independent of $M$. For each dimension, the plot shows the smallest value of $A_{(g,\Lambda')}$ obtained over 1000 randomly selected $\Lambda'\subset \Lambda$ with $|\Lambda'|= \frac{2}{3}|\Lambda|$. 
}
\end{figure}

To this end, for each $p\in [0,1]$, let us define the following quantity 
\begin{equation*}
\Delta(p) = \min_{\substack{\Lambda'\subset \Lambda,\\ \vert\Lambda'\vert\ge (1 - p)\vert\Lambda\vert}}A_{(g,\Lambda')}.
\end{equation*}

\begin{remark}
    Note that following Definition~\ref{def: NERF}, a Gabor frame $(g, \Lambda)$ is an $(p, C)$-numerically erasure-robust frame with $C = \sqrt{\frac{M}{\Delta(p)}}$. Furthermore, if $\Lambda = F\times \mathbb{Z}_M$ and using the bounds for $B_{g, \Lambda}$ obtained in Example~\ref{example_structured}, we obtain that for a random Steinhaus window~$g$, $C = \sqrt{\frac{\vert F \vert}{\Delta(p)}}$ and for a window~$g\sim\text{Unif.}(\mathbb{S}^{M-1})$, $C = \sqrt{\frac{20\vert F \vert}{\Delta(p)}}$.
\end{remark}

Numerical results illustrating the dependence of the value $\Delta(p)$ (for $p = \frac{1}{3}$) on the dimension $M$ are presented in Figure~\ref{num_singular}. For each dimension, the plot shows the smallest value of $A_{(g,\Lambda')}$ obtained over 1000 randomly selected $\Lambda'\subset \Lambda$ with $|\Lambda'|= \frac{2}{3}|\Lambda|$. These numerical results suggest that $\Delta(p)$ is bounded away from zero by a numerical constant that is independent of~$M$, that is, the Gabor frame $(g, F\times \mathbb{Z}_M)$ is robust to erasures. 

\medskip

Next, we investigate the robustness-to-erasures guarantees we obtain for MUB-frames. Theorem~\ref{th:MUBGuarantee}, the proof of which relies on Lemma~\ref{lem: trace} with $m = 1$, states that maximal MUB-frames can achieve numerical robustness to erasures of up to 50\%. A natural question is if this result can be improved and robustness-to-erasures guarantees can be obtained for even higher erasure rates, e.g., by using higher values of $m$.

The bound we have obtained for MUB-frames in Theorem~\ref{th:MUBGuarantee} is independent of the ambient dimension. With this in mind, for the numerical experiments we set $M = 5$ and consider the full Gabor MUB-frame $(g_A, \Z_5 \times \Z_5)$, as it is possible to enumerate all its subframes. For each fixed erasure rate $p$, we consider all the subframes $(g_A, \Lambda')\subset (g_A, \Z_5 \times \Z_5)$ with $\vert \Lambda'\vert = (1 - p)M^2$ and compute their largest (worst-case) condition number $\max_{\substack{\Lambda'\subset \Z_5\times \Z_5\\\vert \Lambda '\vert = 25(1 - p)}} \text{Cond}(\Phi_{\Lambda'}^*)$, as well as its theoretical estimate provided by Theorem~\ref{th:MUBGuarantee} and largest over all subframes trace estimates for $m = 1$ and $m = 2$. The results are presented in Table~\ref{tbl: MUB numerics}

\begin{table}[hbt!]
\centering
\begin{tabularx}{0.97 \textwidth}{| X | X | X | X | X |}
\hline
$p$ & Estimate trace $m = 1$ & Estimate trace $m = 2$ & Theoretical\linebreak estimate & Worst-case \linebreak $\text{Cond} (\Phi_{\Lambda'}^*)$ \\
\hline
0.0 & 1.0 & 1.0 & 1.0 & 1.0 \\
\hline
0.04 & 1.207488 & 1.184004  & 1.230022 & 1.118034 \\
\hline
0.08 & 1.326102 & 1.265527  & 1.355143 & 1.186316 \\
\hline
0.12 & 1.444592 & 1.363902  & 1.473415 & 1.268861 \\
\hline
0.16 & 1.576014 & 1.458856  & 1.596509 & 1.35509 \\
\hline
0.2 & 1.732051 & 1.578976  & 1.732051 & 1.451066 \\
\hline
0.24 & 1.861272 & 1.673945  & 1.888307 & 1.535922 \\
\hline
0.28 & 2.027217 & 1.787106  & 2.076928 & 1.615618 \\
\hline
0.32 & 2.252406 & 1.938546  & 2.317178 & 1.728263 \\
\hline
0.36 & 2.584151 & 2.122931  & 2.645751 & 1.877075 \\
\hline
0.4 & 3.146264 & 2.35985  & 3.146264 & 2.049199 \\
\hline
0.44 & 3.869975 & 2.631544  & 4.075101 & 2.277497 \\
\hline
0.48 & 5.792162 & 3.041437  & 7.069653 & 2.579654 \\
\hline
0.52 & $\infty$ & 3.823597  & $\infty$ & 2.884371 \\
\hline
0.56 & $\infty$ & 6.345638  & $\infty$ & 3.517504 \\
\hline
0.6 & $\infty$ & $\infty$  & $\infty$ & 3.891432 \\
\hline
0.64 & $\infty$ & $\infty$  & $\infty$ & 4.703299 \\
\hline
0.68 & $\infty$ & $\infty$  & $\infty$ & 6.167132 \\
\hline
\end{tabularx}
\caption{For different values of the erasure rate $p$, the table shows the worst-case subframe condition number $\max_{\substack{\Lambda'\subset \Z_5\times \Z_5\\\vert \Lambda '\vert = 25(1 - p)}} \text{Cond}(\Phi_{\Lambda'}^*)$, as well as its theoretical estimate provided by Theorem~\ref{th:MUBGuarantee} and largest over all subframes trace estimates for $m = 1$ and $m = 2$. The value $\infty$ here means that there is no trace estimate due to $\delta_{\Phi} \geq 1$ or no theoretical guarantee with erasures of more than 50\%.}\label{tbl: MUB numerics}
\end{table}

\newpage

We observe that the trace estimate with $m = 2$ gives a bound on the worst-case subframe condition number for erasure rates higher than 50\%. Thus suggests that considering even higher values of $m$ can potentially allow one to obtain robustness-to-erasures guarantees for MUB-frames with even higher values of $p$. Remarkably, the true values of the worst-case subframe condition number seem to not exceed 7, even with an erasure rate of nearly 70\%.

\section*{Acknowledgments}

PS thanks Prof. Dr. Holger Rauhut for insightful discussions on different occasions. PS is supported by NWO Talent program Veni ENW grant, file number VI.Veni.212.176.

\printbibliography

@ARTICLE{balan1,
    AUTHOR = "Radu Balana and Peter Casazza and Dan Edidin",
    TITLE = "On signal reconstruction without phase",
    JOURNAL = {Applied and Computational Harmonic Analysis},
    VOLUME =  {20},
    NUMBER = {3},
    PAGES =  {pp.345-356},
    YEAR = 2006}

@book{vershynin2018high,
  title={High-dimensional probability: An introduction with applications in data science},
  author={Vershynin, Roman},
  volume={47},
  year={2018},
  publisher={Cambridge university press}
}

@INCOLLECTION{pfander2,
    AUTHOR = "G\text{\"{o}}tz E. Pfander",
    TITLE = "Gabor frames in finite dimensions",
    BOOKTITLE = "Finite Frames: Theory and Applications",
    EDITOR = "Peter G. Casazza and Gitta Kutyniok",
    PUBLISHER = {Birkhäser Boston},
    YEAR = 2013}

@book{tao,
  title={Additive combinatorics},
  author={Tao, Terence and Vu, Van H.},
  volume={13},
  year={2006},
  publisher={Cambridge University Press}
}

@ARTICLE{rudelson1,
  title={The {L}ittlewood--{O}fford problem and invertibility of random matrices},
  author={Rudelson, Mark and Vershynin, Roman},
  journal={Advances in Mathematics},
  volume={218},
  number={2},
  pages={600--633},
  year={2008},
  publisher={Elsevier}}

@ARTICLE{rudelson2,
  title={Smallest singular value of a random rectangular matrix},
  author={Rudelson, Mark and Vershynin, Roman},
  journal={Communications on Pure and Applied Mathematics},
  volume={62},
  number={12},
  pages={1707--1739},
  year={2009},
  publisher={Wiley Online Library}}

@article{laurent2000adaptive,
  title={Adaptive estimation of a quadratic functional by model selection},
  author={Laurent, B{\'e}atrice and Massart, Pascal},
  journal={Annals of Statistics},
  pages={1302--1338},
  year={2000},
  publisher={JSTOR}
}

@article{marsaglia1972choosing,
  title={Choosing a point from the surface of a sphere},
  author={Marsaglia, George and others},
  journal={The Annals of Mathematical Statistics},
  volume={43},
  number={2},
  pages={645--646},
  year={1972},
  publisher={Institute of Mathematical Statistics}
}

@article{krahmer2014suprema,
  title={Suprema of chaos processes and the restricted isometry property},
  author={Krahmer, Felix and Mendelson, Shahar and Rauhut, Holger},
  journal={Communications on Pure and Applied Mathematics},
  volume={67},
  number={11},
  pages={1877--1904},
  year={2014},
  publisher={Wiley Online Library}
}

@article{pfander2010sparsity,
  title={Sparsity in time-frequency representations},
  author={Pfander, G{\"o}tz E and Rauhut, Holger},
  journal={Journal of Fourier Analysis and Applications},
  volume={16},
  number={2},
  pages={233--260},
  year={2010},
  publisher={Springer}
}

@article{strohmer2003grassmannian,
  title={Grassmannian frames with applications to coding and communication},
  author={Strohmer, Thomas and Heath, Robert W.},
  journal={Applied and computational harmonic analysis},
  volume={14},
  number={3},
  pages={257--275},
  year={2003},
  publisher={Elsevier}
}

@article{fickus2012numerically,
  title={Numerically erasure-robust frames},
  author={Fickus, Matthew and Mixon, Dustin G.},
  journal={Linear Algebra and its Applications},
  volume={437},
  number={6},
  pages={1394--1407},
  year={2012},
  publisher={Elsevier}
}

@article{latala2005some,
  title={Some estimates of norms of random matrices},
  author={Lata{\l}a, Rafa{\l}},
  journal={Proceedings of the American Mathematical Society},
  volume={133},
  number={5},
  pages={1273--1282},
  year={2005}
}

@book{finite_frames_book,
  title={Finite Frames: Theory and Applications},
  author={Casazza, Peter G. and Kutyniok, Gitta},
  year={2013},
  publisher={Springer}
}

@article{alltop1980complex,
  title={Complex sequences with low periodic correlations (corresp.)},
  author={Alltop, W},
  journal={IEEE Transactions on Information Theory},
  volume={26},
  number={3},
  pages={350--354},
  year={1980},
  publisher={IEEE}
}

@article{benedetto2003finite,
  title={Finite normalized tight frames},
  author={Benedetto, John J and Fickus, Matthew},
  journal={Advances in Computational Mathematics},
  volume={18},
  pages={357--385},
  year={2003},
  publisher={Springer}
}

@incollection{kingsbury1998wavelet,
  title={Wavelet transforms in image processing},
  author={Kingsbury, Nick and Magarey, Julian},
  booktitle={Signal analysis and prediction},
  pages={27--46},
  year={1998},
  publisher={Springer}
}

@article{jaganathan2016phase,
  title={Phase retrieval: An overview of recent developments},
  author={Jaganathan, Kishore and Eldar, Yonina C and Hassibi, Babak},
  journal={Optical compressive imaging},
  pages={279--312},
  year={2016},
  publisher={CRC Press}
}

@article{innocenti2023shadow,
  title={Shadow tomography on general measurement frames},
  author={Innocenti, Luca and Lorenzo, Salvatore and Palmisano, Ivan and Albarelli, Francesco and Ferraro, Alessandro and Paternostro, Mauro and Palma, G Massimo},
  journal={PRX Quantum},
  volume={4},
  number={4},
  pages={040328},
  year={2023},
  publisher={APS}
}

@article{bammer2019gabor,
  title={Gabor frames and deep scattering networks in audio processing},
  author={Bammer, Roswitha and D{\"o}rfler, Monika and Harar, Pavol},
  journal={Axioms},
  volume={8},
  number={4},
  pages={106},
  year={2019},
  publisher={MDPI}
}

@article{rolland2010gabor,
  title={Gabor-based fusion technique for optical coherence microscopy},
  author={Rolland, Jannick P and Meemon, Panomsak and Murali, Supraja and Thompson, Kevin P and Lee, Kye-sung},
  journal={Optics express},
  volume={18},
  number={4},
  pages={3632--3642},
  year={2010},
  publisher={Optical Society of America}
}

\newpage

\appendix

\section{Appendix: Probability theory tools}\label{probability_background}

In this appendix, we collect the probabilistic tools and results used in the proofs of this paper. We start by stating the Hoeffding's inequality in the special case of Bernoulli random variables.

\begin{lemma}[Hoeffding's inequality]\label{Hoeffging_ineq_Bernoulli}
Let $X_j$, $j\in \{1,\dots N\}$, be independent identically distributed Bernoulli random variables, such that $\mathbb{P}\{X_j=1\} = p$, for some $p\in (0,1)$, that is $X_j \sim \text{i.i.d. } B\left(1,p\right)$. Consider the random variable ${S = \sum_{j = 1}^N X_j}$. Then, for every $t>0$, we have
\begin{equation*}
\begin{gathered}
\mathbb{P}\{S< (p - t)N\}\le e^{-2t^2N} \quad \text{and} \quad \mathbb{P}\{S> (p + t)N\}\le e^{-2t^2N}.
\end{gathered}
\end{equation*}
\end{lemma}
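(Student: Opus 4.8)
The plan is to prove both inequalities by the exponential Chernoff method and to deduce the lower tail from the upper tail by a symmetry argument. For the upper tail, I would fix $\lambda > 0$ and apply Markov's inequality to the increasing map $s \mapsto e^{\lambda s}$:
\[
\mathbb{P}\{S > (p+t)N\} = \mathbb{P}\left\{e^{\lambda S} > e^{\lambda(p+t)N}\right\} \le e^{-\lambda(p+t)N}\,\mathbb{E}\!\left(e^{\lambda S}\right).
\]
Writing $Y_j = X_j - p$ for the centered variables, which are independent with $\mathbb{E}(Y_j) = 0$ and take values in the length-one interval $[-p,\,1-p]$, independence lets the moment generating function factorize as $\mathbb{E}(e^{\lambda S}) = e^{\lambda p N}\prod_{j=1}^N \mathbb{E}(e^{\lambda Y_j}) = e^{\lambda p N}\bigl(\mathbb{E}(e^{\lambda Y_1})\bigr)^N$, so the problem reduces to bounding the single-variable moment generating function $\mathbb{E}(e^{\lambda Y_1})$.

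This single-variable bound is \emph{Hoeffding's lemma}, which I would prove by convexity. Since $y \mapsto e^{\lambda y}$ is convex, on $[a,b] = [-p,\,1-p]$ one has the chord inequality $e^{\lambda y} \le \frac{b-y}{b-a}e^{\lambda a} + \frac{y-a}{b-a}e^{\lambda b}$. Taking expectations and using $\mathbb{E}(Y_1) = 0$ and $b - a = 1$ gives $\mathbb{E}(e^{\lambda Y_1}) \le (1-u)e^{\lambda a} + u\,e^{\lambda b}$ with $u = -a = p \in [0,1]$. Setting $s = \lambda(b-a) = \lambda$ and factoring out $e^{\lambda a} = e^{-us}$, this equals $e^{\phi(s)}$ with $\phi(s) = -us + \log(1 - u + u\,e^{s})$. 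A direct computation shows $\phi(0) = \phi'(0) = 0$ and $\phi''(s) = q(s)\bigl(1 - q(s)\bigr)$, where $q(s) = \frac{u e^s}{1 - u + u e^s} \in [0,1]$; hence $\phi''(s) \le \tfrac14$, and Taylor's theorem with Lagrange remainder yields $\phi(s) \le s^2/8$. Thus $\mathbb{E}(e^{\lambda Y_1}) \le e^{\lambda^2/8}$.

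Combining the two displays gives $\mathbb{P}\{S > (p+t)N\} \le \exp\!\bigl(-\lambda t N + N\lambda^2/8\bigr)$ for every $\lambda > 0$. Optimizing the exponent — the quadratic $-\lambda t + \lambda^2/8$ is minimized at $\lambda = 4t$ — produces the claimed bound $\exp(-2t^2 N)$. For the lower tail I would apply this result to $X_j' = 1 - X_j$, which are i.i.d. Bernoulli with parameter $1-p$ and satisfy $S' := \sum_{j} X_j' = N - S$. The event $\{S < (p-t)N\}$ is identical to $\{S' > ((1-p)+t)N\}$, and the upper-tail bound (whose right-hand side $e^{-2t^2N}$ does not depend on the success probability) applies verbatim, finishing the proof. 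The main obstacle is Hoeffding's lemma: the convexity chord bound is elementary, but one must carry out the estimate $\phi''(s) \le \tfrac14$ carefully, which ultimately rests on the elementary inequality $q(1-q) \le \tfrac14$ for $q \in [0,1]$.
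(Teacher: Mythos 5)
Your proof is correct and complete: the Chernoff--Markov reduction, the convexity (chord) proof of Hoeffding's lemma with $\phi''(s) = q(s)\bigl(1-q(s)\bigr) \le \tfrac14$, the optimization at $\lambda = 4t$ giving the exponent $-2t^2N$, and the reflection $X_j' = 1 - X_j$ for the lower tail are all carried out accurately. The paper itself states this lemma without proof, as a standard tool collected in its appendix, and your argument is precisely the classical textbook derivation of Hoeffding's inequality, so there is nothing to reconcile between the two.
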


The following lemma, proven in \cite{laurent2000adaptive}, is useful for obtaining bounds on the norms of random vectors.

\begin{lemma}\emph{\textbf{\cite{laurent2000adaptive}}}\label{chi_square}
Let $Y_1,\dots,Y_M \sim  i.i.d. ~\mathcal{N}(0,1)$ and fix $c = (c_1,\dots,c_M)$ with $c_k\ge 0$, $k\in\{1,\dots, M\}$. Then, for $Z = \sum_{k = 1}^M c_k(Y_k^2 - 1)$ the following inequalities hold for any $t>0$.
\begin{equation}\label{lower}
\mathbb{P}\{Z \ge 2\Vert c\Vert _2 \sqrt{t} + 2\Vert c\Vert _\infty t\} \le e^{-t};
\end{equation}
\begin{equation}\label{upper}
\mathbb{P}\{Z \le -2\Vert c\Vert _2 \sqrt{t}\} \le e^{-t}.
\end{equation}

\end{lemma}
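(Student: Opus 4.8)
The plan is to establish both tail bounds by the classical Cram\'er--Chernoff (exponential moment) method, exploiting that $Z$ is a linear combination of independent centered $\chi^2_1$ random variables. Write $v := \|c\|_2^2$ and $b := \|c\|_\infty$. Since the $Y_k$ are independent, the moment generating function of $Z$ factorizes over $k$, and for a single standard Gaussian one has the classical identity $\mathbb{E}[e^{s(Y_k^2-1)}] = e^{-s}(1-2s)^{-1/2}$ valid for $s<1/2$. Consequently, for $0\le s<1/(2b)$,
\begin{equation*}
\log\mathbb{E}[e^{sZ}] = \sum_{k=1}^M\left(-sc_k - \tfrac12\log(1-2sc_k)\right).
\end{equation*}
This closed form is the starting point from which both estimates will be extracted by choosing $s$ appropriately.

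The first substantive step is to reduce this cumulant generating function to a clean \emph{sub-gamma} bound. Expanding the logarithm as a power series and using $\tfrac1j\le\tfrac12$ for $j\ge 2$ gives, for $u=2sc_k\in[0,1)$, the elementary inequality $-\tfrac12\log(1-u)-\tfrac{u}{2}=\tfrac12\sum_{j\ge 2}u^j/j\le \tfrac{u^2}{4(1-u)}$, hence $-sc_k-\tfrac12\log(1-2sc_k)\le \frac{s^2c_k^2}{1-2sc_k}$. Summing over $k$ and bounding each denominator below by $1-2bs$ yields
\begin{equation*}
\log\mathbb{E}[e^{sZ}]\le \frac{v\,s^2}{1-2bs},\qquad 0\le s<\tfrac{1}{2b}.
\end{equation*}
This is precisely the statement that $Z$ is sub-gamma with variance factor $2v$ and scale parameter $2b$, and it is the inequality driving the upper tail~\eqref{lower}.

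With this in hand I would run the Chernoff bound $\mathbb{P}(Z\ge r)\le \exp\!\big(-sr+\log\mathbb{E}[e^{sZ}]\big)$ and optimize over $s$, treating the two tails separately. The lower tail~\eqref{upper} is the easy case: applying the mirror computation with $-s$ and the elementary bound $\log(1+u)\ge u-u^2/2$ gives the purely sub-Gaussian estimate $\log\mathbb{E}[e^{-sZ}]\le v s^2$, so $\mathbb{P}(Z\le -r)\le e^{-sr+vs^2}$; minimizing at $s=r/(2v)$ produces $e^{-r^2/(4v)}$, and the choice $r=2\sqrt{vt}$ yields exactly $e^{-t}$. The upper tail is the technical heart: one must minimize $-sr+\frac{vs^2}{1-2bs}$ over $s\in[0,1/(2b))$, where the pole at $s=1/(2b)$ makes the optimization genuinely nontrivial. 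Here I would invoke the standard Legendre (Cram\'er transform) computation for the sub-gamma class, whose inverse Cram\'er transform is $t\mapsto \sqrt{2\cdot(2v)\,t}+2bt = 2\sqrt{vt}+2bt$, delivering $\mathbb{P}(Z\ge 2\sqrt{vt}+2bt)\le e^{-t}$.

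The main obstacle is therefore this upper-tail optimization with its singularity at $s=1/(2b)$: unlike the sub-Gaussian lower tail, it does not admit a one-line minimizer, and the clean closed form $2\|c\|_2\sqrt{t}+2\|c\|_\infty t$ emerges only after carrying out (or citing) the sub-gamma Legendre inversion. Everything else reduces to the elementary logarithmic inequalities above, so the work concentrates almost entirely in converting the sub-gamma moment bound into the stated deviation inequality.
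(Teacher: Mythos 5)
The paper does not prove this lemma at all; it is quoted verbatim from Laurent--Massart \cite{laurent2000adaptive}, and your argument is essentially the proof given in that reference: the chi-square MGF identity, the elementary expansion $-\tfrac12\log(1-u)-\tfrac{u}{2}\le \tfrac{u^2}{4(1-u)}$ yielding the sub-gamma bound $\log\mathbb{E}[e^{sZ}]\le \|c\|_2^2s^2/(1-2\|c\|_\infty s)$, and the purely sub-Gaussian estimate $\log\mathbb{E}[e^{-sZ}]\le \|c\|_2^2s^2$ for the lower tail, which you optimize correctly. The one step you invoke rather than execute --- the sub-gamma Legendre inversion for the upper tail --- is not actually an obstacle: taking $s^\ast=\sqrt{t}\big/\bigl(\|c\|_2+2\|c\|_\infty\sqrt{t}\bigr)$, which lies in $[0,1/(2\|c\|_\infty))$, one has $1-2\|c\|_\infty s^\ast=\|c\|_2\big/\bigl(\|c\|_2+2\|c\|_\infty\sqrt{t}\bigr)$, and a one-line computation gives $s^\ast\bigl(2\|c\|_2\sqrt{t}+2\|c\|_\infty t\bigr)-\|c\|_2^2(s^\ast)^2/(1-2\|c\|_\infty s^\ast)=t$ exactly, so the Chernoff bound yields $e^{-t}$ with no need to cite the general sub-gamma machinery.
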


Using Lemma \ref{chi_square}, we obtain the following bounds on the norm of a random Gaussian vector $h\sim \mathcal{C}\mathcal{N}\left( 0, \frac{1}{M} I_M \right)$.

\begin{lemma}\label{lemma_norm_gaussian}
Consider a random vector $h\in \mathbb{C}^M$, such that $h\sim \mathcal{C}\mathcal{N}\left( 0, \frac{1}{M} I_M \right)$. Then, there exists a constant $C>0$, such that
\begin{equation*}
\mathbb{P}\left\lbrace\frac{1}{2} < \Vert h\Vert _2 < 2\right\rbrace \ge 1 - e^{-CM}.
\end{equation*}
\end{lemma}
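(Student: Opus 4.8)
The plan is to reduce the squared norm $\|h\|_2^2$ to a weighted sum of independent squared standard Gaussians and then invoke the concentration estimates of Lemma~\ref{chi_square}. Writing each coordinate as $h_k = a_k + i b_k$, the covariance $\frac{1}{M}I_M$ forces $\mathbb{E}|h_k|^2 = \frac{1}{M}$; since for a circularly symmetric complex Gaussian the real and imaginary parts are independent with equal variance, this gives $a_k, b_k \sim \mathcal{N}\!\left(0, \frac{1}{2M}\right)$. Collecting the $2M$ rescaled parts $Y_j := \sqrt{2M}\,a_k$ and $\sqrt{2M}\,b_k$ into a single family of i.i.d.\ standard normals, I would write
\begin{equation*}
\|h\|_2^2 = \sum_{k=1}^M \left(a_k^2 + b_k^2\right) = \frac{1}{2M}\sum_{j=1}^{2M} Y_j^2.
\end{equation*}

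Next I would apply Lemma~\ref{chi_square} in ambient dimension $2M$ with uniform weights $c_j = \frac{1}{2M}$, so that $Z = \sum_{j} c_j (Y_j^2 - 1) = \|h\|_2^2 - 1$, with $\|c\|_2 = \frac{1}{\sqrt{2M}}$ and $\|c\|_\infty = \frac{1}{2M}$. For the upper tail I would set $t = M$ in~\eqref{lower}, obtaining $\mathbb{P}\{Z \ge \sqrt{2} + 1\} \le e^{-M}$; since $\sqrt{2}+1 < 3$, this yields $\mathbb{P}\{\|h\|_2 \ge 2\} = \mathbb{P}\{\|h\|_2^2 \ge 4\} \le \mathbb{P}\{Z \ge 3\} \le e^{-M}$. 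For the lower tail I would choose $t = \gamma M$ with $\gamma$ small enough that $2\|c\|_2\sqrt{t} = \sqrt{2\gamma} \le \frac{3}{4}$ (e.g.\ $\gamma = \frac{9}{32}$) in~\eqref{upper}, giving $\mathbb{P}\{Z \le -\frac{3}{4}\} \le e^{-\frac{9}{32}M}$, hence $\mathbb{P}\{\|h\|_2 \le \frac{1}{2}\} = \mathbb{P}\{\|h\|_2^2 \le \frac{1}{4}\} \le e^{-\frac{9}{32}M}$.

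Finally, a union bound over the two bad events gives a total failure probability at most $e^{-M} + e^{-\frac{9}{32}M}$, which is bounded by $e^{-CM}$ for a suitable constant $C > 0$ (absorbing the constant factor into the exponent for $M$ large, and adjusting $C$ on the finitely many remaining dimensions), completing the proof. I do not expect a genuine obstacle here: the argument is a direct application of the given chi-square bound. The only point requiring care is bookkeeping the constants, namely verifying that with the deviation parameter $t$ taken proportional to $M$ the two one-sided concentration bands remain strictly inside the target window $[\tfrac14, 4]$ for $\|h\|_2^2$ while keeping both exponents linear in $M$.
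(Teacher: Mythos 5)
Your proof is correct and takes essentially the same route as the paper's: both decompose $h$ into real and imaginary parts to reduce $\Vert h\Vert_2^2$ to a sum of $2M$ independent squared standard Gaussians and then apply the two tail bounds of Lemma~\ref{chi_square} with deviation parameter $t$ proportional to $M$ (the paper works with unit weights on $2M\Vert h\Vert_2^2$ and takes $t = M/2$ and $t = 9M/32$, which matches your weighted version $c_j = \frac{1}{2M}$ up to trivial rescaling). The only differences are cosmetic bookkeeping of constants.
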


\begin{proof}
First, we note that $$2M \Vert h\Vert _2^2 = 2M \sum_{k = 1}^M (|a_k|^2 + |b_k|^2),$$ where $h(k) = a(k) + ib(k)$ and $a(k), b(k)\sim i.i.d. ~ \mathcal{N}(0,\frac{1}{2M})$. Then, for any \linebreak$k \in \{1,\dots,M\}$, $\sqrt{2M}a(k), \sqrt{2M}b(k)$ are independent standard Gaussian random variables. We apply inequality (\ref{lower}) from Lemma \ref{chi_square} with $c_k = 1$, $k \in \{1,\dots,M\}$, to obtain that, for any $t>0$,
\begin{equation*}
\mathbb{P}\{2M\Vert h\Vert _2^2 \ge \sqrt{8Mt} + 2t + 2M\}\le e^{-t}.
\end{equation*}
\noindent Taking $t= M/2$, we have
\begin{equation}\label{eq_upper_norm_gaussian}
\mathbb{P}\{\Vert h\Vert _2^2 > 4\} = \mathbb{P}\{2M\Vert h\Vert _2^2 > 8M\} \le \mathbb{P}\{2M\Vert h\Vert _2^2 \ge 5M\}\le e^{-M/2}.
\end{equation}
Similarly, by applying inequality (\ref{upper}) from Lemma \ref{chi_square} with $c_k = 1$, we get
\begin{equation*}
\mathbb{P}\left\lbrace\Vert h\Vert _2^2 \le -\sqrt{\frac{2t}{M}} + 1\right\rbrace\le e^{-t},
\end{equation*}
\noindent for every $t>0$. Taking $t= 9M/32$, we obtain
\begin{equation}\label{eq_lower_norm_gaussian}
\mathbb{P}\left\lbrace\Vert h\Vert _2^2 \le -\sqrt{\frac{2t}{M}} + 1\right\rbrace = \mathbb{P}\left\lbrace\Vert h\Vert _2^2 \le \frac{1}{4}\right\rbrace\le e^{-9M/32},
\end{equation}
Summarizing the bounds obtained in \eqref{eq_upper_norm_gaussian} and \eqref{eq_lower_norm_gaussian}, we conclude the desired claim.
\end{proof}

\subsection{Fourier bias}

In additive combinatorics, the notion of Fourier bias is used to measure pseudorandomness of a set. Roughly speaking, it helps to distinguish between sets which are highly uniform and behave like random sets, and those which are highly non-uniform and behave like arithmetic progressions \cite{tao}. 

\begin{definition}
Take $C\subset \mathbb{Z}_M$ and let ${\bf1}_C$ be the characteristic function of $C$. Then the \emph{Fourier bias} of $C$ is given by
\begin{equation*}
\Vert C\Vert _u = \max_{m\in \mathbb{Z}_M\setminus \{0\}}{|(\mathcal{F}_M {\bf1}_C)(m)|}.
\end{equation*}
\end{definition}

The following lemma follows from Chernoff's inequality and can be found in \cite[Lemma~4.16]{tao}. Loosely speaking, it shows that, if $B$ is a random subset of $A\subset \mathbb{Z}_M$, then $\Vert B\Vert _u$ is tightly concentrated around $\frac{|B|}{|A|}\Vert A\Vert _u$. In other words, the Fourier bias of a random subset scales proportionally to its cardinality.

\begin{lemma}\label{lemma_fourier_bias_rand}
Consider an additive subset $A$ of $\mathbb{Z}_M$ with $M>4$, and fix $0<\tau\le 1$. Let $B$ be a random subset of $A$, such that ${\bf 1}_{B} (a)\sim \text{i.i.d.}~B(1,\tau)$, for $a\in A$, that is, events $\{a\in B\}$ are independent and have probability $\tau$. Then, for any $\lambda>0$ and $\sigma^2 = \frac{|A|}{M^2}\tau (1 - \tau)$, we have
\begin{equation*}
\mathbb{P}\left\lbrace \big| \Vert B\Vert _u - \tau \Vert A\Vert _u\big| \ge \lambda \sigma \right\rbrace \le 4M \max \left\lbrace e^{-\frac{\lambda^2}{8}}, ~e^{-\frac{\lambda \sigma}{2\sqrt{2}}} \right\rbrace.
\end{equation*}
\end{lemma}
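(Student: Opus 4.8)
\emph{The plan} is to fix a single nonzero frequency, prove a concentration bound for the fluctuation of the Fourier coefficient of ${\bf 1}_B$ around its mean, and then pass to the Fourier bias (a maximum over frequencies) by a union bound. First I would write out the Fourier coefficient as an exponential sum over $A$. Since the indicators ${\bf 1}_B(a)$, $a\in A$, are independent Bernoulli$(\tau)$ variables, centering gives, for each $m\in\mathbb{Z}_M\setminus\{0\}$,
\begin{equation*}
(\mathcal{F}_M {\bf 1}_B)(m) - \tau(\mathcal{F}_M {\bf 1}_A)(m) = \sum_{a\in A} c_a\bigl({\bf 1}_B(a)-\tau\bigr),
\end{equation*}
a sum of independent, mean-zero complex random variables whose coefficients $c_a$ have constant modulus $|c_a| = \tfrac1M$, so that the total variance is exactly $\sum_{a\in A}|c_a|^2\tau(1-\tau) = \tfrac{|A|}{M^2}\tau(1-\tau) = \sigma^2$. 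This is the natural random object to which a Chernoff-type estimate applies.

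Next I would apply a Chernoff-type tail bound for such a sum. Splitting the sum into its real and imaginary parts produces two sums of independent bounded real random variables, each with variance at most $\sigma^2$ and each summand bounded by $\tfrac1M$. Since $\{|Z|\ge\lambda\sigma\}$ is contained in the union of the events that either part exceeds $\lambda\sigma/\sqrt{2}$, the standard bounded-difference Chernoff inequality applied to each part yields the characteristic two-regime tail: a sub-Gaussian contribution $e^{-\lambda^2/8}$, dominant for moderate $\lambda$, and a sub-exponential contribution $e^{-\lambda\sigma/(2\sqrt2)}$, dominant for large $\lambda$. Accounting for the two parts and the two-sidedness of each bound gives the factor $4$, so that for each fixed $m\neq 0$,
\begin{equation*}
\mathbb{P}\Bigl\{\bigl|(\mathcal{F}_M {\bf 1}_B)(m) - \tau(\mathcal{F}_M {\bf 1}_A)(m)\bigr|\ge\lambda\sigma\Bigr\} \le 4\max\bigl\{e^{-\lambda^2/8},\,e^{-\lambda\sigma/(2\sqrt2)}\bigr\}.
\end{equation*}

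To convert this per-frequency estimate into a bound on the Fourier bias, I would use the elementary inequality $\bigl|\max_m f(m)-\max_m g(m)\bigr|\le\max_m|f(m)-g(m)|$ together with the reverse triangle inequality $\bigl||z|-|w|\bigr|\le|z-w|$. Applying these with $f(m)=|(\mathcal{F}_M {\bf 1}_B)(m)|$ and $g(m)=\tau|(\mathcal{F}_M {\bf 1}_A)(m)|$ gives
\begin{equation*}
\bigl|\Vert B\Vert_u - \tau\Vert A\Vert_u\bigr| \le \max_{m\neq 0}\bigl|(\mathcal{F}_M {\bf 1}_B)(m) - \tau(\mathcal{F}_M {\bf 1}_A)(m)\bigr|.
\end{equation*}
A union bound over the at most $M$ nonzero frequencies then multiplies the per-frequency probability by $M$, producing the claimed estimate $4M\max\{e^{-\lambda^2/8},\,e^{-\lambda\sigma/(2\sqrt2)}\}$.

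The conceptual heart is the reduction step: one must notice that $\Vert B\Vert_u$ and $\tau\Vert A\Vert_u$ are maxima possibly attained at \emph{different} frequencies, so a naive coefficient-by-coefficient comparison is not available; the $\max$-of-differences inequality repairs this cleanly. The main obstacle I expect is the Chernoff estimate itself: producing the exact two-regime tail with the stated constants $\tfrac18$ and $\tfrac{1}{2\sqrt2}$ requires carefully tracking the per-term bound $\tfrac1M$, the variance budget $\sigma^2$, and the $\sqrt2$ loss incurred by the real/imaginary split, and then verifying (using the hypothesis $M>4$) that the sub-exponential rate one actually obtains is at least the one stated. I expect the bookkeeping here, rather than any deep idea, to be the part most prone to error.
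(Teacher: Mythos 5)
Your proposal is correct, and it is essentially the intended argument: the paper itself gives no proof of this lemma, citing it verbatim from Tao--Vu (Lemma~4.16) with the remark that it ``follows from Chernoff's inequality,'' and your reconstruction --- centering each Fourier coefficient as a sum of independent terms of modulus $\tfrac1M$ with total variance $\sigma^2$, splitting into real and imaginary parts to get the per-frequency tail $4\max\{e^{-\lambda^2/8}, e^{-\lambda\sigma/(2\sqrt2)}\}$, reducing the bias comparison via $\big|\max_m f - \max_m g\big| \le \max_m |f-g|$ plus the reverse triangle inequality, and taking a union bound over the at most $M$ nonzero frequencies --- is exactly that proof. The bookkeeping you flagged as risky in fact works out with room to spare: the sub-exponential rate the Chernoff bound actually delivers is $e^{-\lambda\sigma M/(2\sqrt2)}$, which is stronger than the stated $e^{-\lambda\sigma/(2\sqrt2)}$, and the hypothesis $M>4$ plays no role in this tail estimate.
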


As an easy consequence of Lemma \ref{lemma_fourier_bias_rand}, we obtain the following result that provides an efficient bound on the absolute value of the sum of randomly sampled roots of unity.

\begin{corollary}\label{cor_sum_rand_roots_of_unity}
Let $B$ be a random subset of $\mathbb{Z}_M$, such that ${\bf 1}_{B} (m)\sim \text{i.i.d.}~B(1,\tau)$, for $m\in \mathbb{Z}_M$ and $0<\tau < 1$. Then, for any constant $C>4\sqrt{2}$, we have
\begin{equation*}
\mathbb{P}\left\lbrace \max_{m\in \mathbb{Z}_M\setminus \{0\}} \left|\sum_{b\in B} e^{2\pi i bm/M}\right| < C\log M \right\rbrace \ge 1 -  \frac{1}{M^{\frac{C}{2\sqrt{2}} - 2}}.
\end{equation*}
\end{corollary}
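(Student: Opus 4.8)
The plan is to recognize the quantity $\max_{m\in\mathbb{Z}_M\setminus\{0\}}|\sum_{b\in B}e^{2\pi i bm/M}|$ as (a multiple of) the Fourier bias $\Vert B\Vert_u$ and then apply Lemma~\ref{lemma_fourier_bias_rand} with the ``ambient'' set taken to be all of $\mathbb{Z}_M$. Concretely, by the definition of $\mathcal{F}_M$ and of the Fourier bias, together with the conjugation symmetry $|\sum_b e^{2\pi i bm/M}| = |\sum_b e^{-2\pi i bm/M}|$, the event $\{\max_{m\neq 0}|\sum_{b\in B}e^{2\pi i bm/M}| < C\log M\}$ is exactly the statement that $\Vert B\Vert_u$ lies below a threshold proportional to $C\log M$. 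The first step is therefore to rewrite the target probability as a one-sided tail bound on $\Vert B\Vert_u$.

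Next I would invoke Lemma~\ref{lemma_fourier_bias_rand} with $A=\mathbb{Z}_M$. The crucial simplification is that the full group has vanishing Fourier bias: since $\sum_{k\in\mathbb{Z}_M}e^{-2\pi i km/M}=0$ for every $m\neq 0$, we have $\Vert\mathbb{Z}_M\Vert_u=0$, so the centering term $\tau\Vert A\Vert_u$ drops out and the lemma reduces to a clean bound on $\mathbb{P}\{\Vert B\Vert_u\ge\lambda\sigma\}$ with $\sigma^2=\frac{\tau(1-\tau)}{M}$, i.e. $\sigma$ of order $M^{-1/2}$.

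The heart of the argument is the choice of the deviation parameter $\lambda$. I would set $\lambda$ so that $\lambda\sigma$ equals the desired threshold $C\log M$; since $\sigma\le \tfrac{1}{2}M^{-1/2}$, this forces $\lambda\ge 2\sqrt{M}\,C\log M$, which is large. The lemma gives the bound $4M\max\{e^{-\lambda^2/8},e^{-\lambda\sigma/(2\sqrt2)}\}$, and the main point I must verify is which of the two exponentials dominates. With $\lambda\sigma = C\log M$ the sub-exponential term equals $e^{-C\log M/(2\sqrt 2)} = M^{-C/(2\sqrt2)}$, exactly the rate appearing in the statement, whereas the Gaussian term $e^{-\lambda^2/8}$ is superpolynomially small because $\lambda$ is of order $\sqrt{M}\log M$. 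A quick comparison (the condition $\lambda\ge 2\sqrt2\,\sigma$, which holds trivially here) confirms that the maximum is attained by the sub-exponential term, so the lemma yields $\mathbb{P}\{\Vert B\Vert_u\ge C\log M\}\le 4M\cdot M^{-C/(2\sqrt2)}$.

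Finally I would absorb the prefactor: since the hypothesis $M>4$ gives $4\le M$, we get $4M\cdot M^{-C/(2\sqrt2)}\le M^{2-C/(2\sqrt2)} = M^{-(C/(2\sqrt2)-2)}$, which is precisely the claimed failure probability, while the constraint $C>4\sqrt2$ guarantees the exponent $\frac{C}{2\sqrt2}-2$ is positive and hence the bound is nontrivial. The only genuinely delicate point is the bookkeeping in the first step, namely tracking the normalization constant that links the raw exponential sum to $\Vert\cdot\Vert_u$ and to the variance $\sigma^2$, so that the product $\lambda\sigma$ lands exactly on $C\log M$; once that normalization is pinned down, the remainder is the routine two-regime estimate above.
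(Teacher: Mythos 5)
Your proposal is correct and is essentially the paper's own proof: it applies Lemma~\ref{lemma_fourier_bias_rand} with $A=\mathbb{Z}_M$, uses $\Vert \mathbb{Z}_M\Vert_u=0$ and $\sigma^2=\tau(1-\tau)/M$, chooses $\lambda$ so that $\lambda\sigma=C\log M$ (the paper's explicit choice is $\lambda=\frac{C}{\sqrt{\tau(1-\tau)}}\sqrt{M}\log M$), observes that the sub-exponential term dominates the Gaussian one, and absorbs the prefactor via $4M\le M^2$. Even the normalization bookkeeping you flag as the delicate point is handled by the paper in exactly the convention you adopt: its last line identifies $\Vert B\Vert_u$ directly with the unnormalized exponential sum $\max_{m\ne 0}\left|\sum_{b\in B}e^{2\pi i bm/M}\right|$.
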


\begin{proof}
Let us apply Lemma \ref{lemma_fourier_bias_rand} with $A = \mathbb{Z}_M$. Then, since $\Vert \mathbb{Z}_M\Vert _u = 0$ and $\sigma^2 = \frac{|A|}{M^2}\tau (1 - \tau) = \frac{\tau (1 - \tau)}{M}$, for any $\lambda>0$ we obtain
\begin{equation*}
\mathbb{P}\left\lbrace  \Vert B\Vert _u \ge \lambda \sqrt{\frac{\tau (1 - \tau)}{M}} \right\rbrace \le 4M \max \left\lbrace e^{-\frac{\lambda^2}{8}}, ~e^{-\frac{\lambda \sqrt{\tau (1 - \tau)}}{2\sqrt{2M}}} \right\rbrace.
\end{equation*}
Then, by choosing $\lambda = \frac{C}{\sqrt{\tau(1 - \tau)}}\sqrt{M}\log M$ with a constant $C>4\sqrt{2}$, we ensure that
\begin{equation*}
\Scale[0.93]{
\begin{split}
4M \max \left\lbrace e^{-\frac{\lambda^2}{8}}, ~e^{-\frac{\lambda \sqrt{\tau (1 - \tau)}}{2\sqrt{2M}}} \right\rbrace = \max \left\lbrace e^{-\frac{C^2 M\log^2 M}{8\tau(1 - \tau)} + \log (4M)}, ~e^{-\frac{C\log M}{2\sqrt{2}} + \log (4M)}\right\rbrace = \frac{1}{M^{\frac{C}{2\sqrt{2}} - 2}}.
\end{split}}
\end{equation*}
Thus, we obtain that
\begin{equation*}
\mathbb{P}\left\lbrace  \Vert B\Vert _u \ge C\log M \right\rbrace \le \frac{1}{M^{\frac{C}{2\sqrt{2}} - 2}},
\end{equation*}
\noindent and $\Vert B\Vert _u = \max_{m\in \mathbb{Z}_M\setminus \{0\}}{|(\mathcal{F} {\bf1}_B)(m)|} = \max_{m\in \mathbb{Z}_M\setminus \{0\}} \left|\sum_{b\in B} e^{2\pi i bm/M}\right|$, which concludes the proof.
\end{proof}

\end{document}